\documentclass[reqno,10pt]{amsart}
\usepackage{etex}
\usepackage{float}
\usepackage{graphicx}
\usepackage{amsmath,amssymb}
\usepackage{hyperref}
\usepackage{amsaddr}
\usepackage{concmath}
\usepackage{color}

\usepackage{amsfonts}
\usepackage{epsfig}
\usepackage{graphics}
\usepackage{subfig}
\usepackage{dsfont}
\usepackage{amsthm}

\usepackage{geometry}
\geometry{
	a4paper,
	left=1in,
	right=1in,
	top=1in,
	bottom=1in,
}

\DeclareMathOperator\supp{supp}


\usepackage{bold-extra}
%
%
\makeatletter
\renewcommand\section{\@startsection{section}{1}
  \z@{-0.8\linespacing\@plus-0.7\linespacing}{0.7\linespacing}
 {\large\scshape\center}}
\makeatother

\newtheoremstyle{thm}
  {2mm}   
  {2mm}   
  {\itshape}  
  {0pt}       
  {\ttfamily\bfseries}  
  {.}         
  {5pt plus 1pt minus 1pt} 
  {\thmname{#1}\thmnumber{ #2}\thmnote{ (#3)}} 
 \theoremstyle{thm}

 
\vfuzz2pt 
\hfuzz2pt 
\newtheorem{thm}{\texttt{\textbf{Theorem}}}[section]

\newtheorem{lem}[thm]{Lemma}

\theoremstyle{definition}

\theoremstyle{remark}
\newtheorem{rem}[thm]{Remark}
\numberwithin{equation}{section}

\newcommand{\eps}{\varepsilon}


\pretolerance=10000
\allowdisplaybreaks 
\usepackage{tikz}
\usetikzlibrary{arrows.meta}
\usetikzlibrary{decorations.markings}
\tikzset{->-/.style={decoration={
  markings,
  mark=at position #1 with {\arrow{>}}},postaction={decorate}}}
  \tikzset{middlearrow/.style={
        decoration={markings,
            mark= at position 0.55 with {\arrow{#1}} ,
        },
        postaction={decorate}
    }
}
\usetikzlibrary{shapes.geometric}

\makeatletter
\@namedef{subjclassname@2020}{%
  \textup{2020} Mathematics Subject Classification}
\makeatother

\begin{document}
	
	\title[Higher-order nonlinear Schrödinger equation]{{\Large\ttfamily\bfseries L\MakeLowercase{ocal well-posedness of the higher-order nonlinear} S\MakeLowercase{chrödinger equation on the half-line:  single boundary condition case}}}%
	\author{Aykut Alkın$^{\MakeLowercase{a}}$, Dionyssios  Mantzavinos$^{\MakeLowercase{b}}$ and Türker Özsarı$^{\MakeLowercase{c}}$}%
	\address{\normalfont $^a$Department of Mathematics, Izmir Institute of Technology\\ Urla, Izmir 35430, Turkey}
	\address{\normalfont $^b$Department of Mathematics, University of Kansas\\ Lawrence, KS 66045, USA}
	\email{mantzavinos@ku.edu}
	\address{\normalfont $^c$Department of Mathematics, Bilkent University\\ Çankaya, Ankara 06800, Turkey
	\\[7mm]
{\itshape Dedicated to Professor Athanassios S. Fokas}
}
	\email{turker.ozsari@bilkent.edu.tr}

\thanks{\textit{Acknowledgements.} The authors would like to thank the Isaac Newton Institute for Mathematical Sciences, Cambridge, U.K. for support and hospitality during the program ``Dispersive Hydrodynamics'', when work on this paper was undertaken (EPSRC Grant Number EP/R014604/1). 
The second author gratefully acknowledges partial support from the U.S. National Science Foundation (NSF-DMS 2206270).}
\subjclass[2020]{Primary: 35Q55, 35Q53, 35G16, 35G31}
\keywords{higher-order nonlinear Schr\"odinger equation, Korteweg-de Vries equation, initial-boundary value problem, nonzero boundary conditions,  Fokas method, unified transform, well-posedness in Sobolev spaces, low regularity solutions, power nonlinearity, Strichartz estimates}
\date{May 18, 2023}

\begin{abstract}
We establish local well-posedness in the sense of Hadamard  for a certain third-order nonlinear Schrödinger equation with a multi-term linear part and a general power nonlinearity, known as higher-order nonlinear Schr\"odinger equation, formulated on the half-line $\{x>0\}$. 
We consider the scenario of associated coefficients such that  only one boundary condition is required and hence assume a general nonhomogeneous boundary datum of Dirichlet type at $x=0$. 
Our functional framework centers around fractional Sobolev spaces $H_x^s(\mathbb{R}_+)$ with respect to the spatial variable. We treat both high regularity ($s>\frac{1}{2}$) and low regularity ($s<\frac{1}{2}$) solutions: in the former setting, the relevant nonlinearity can be handled via the Banach algebra property; in the latter setting, however, this is no longer the case and, instead, delicate Strichartz estimates must be established. This task is especially challenging in the framework of nonhomogeneous initial-boundary value problems, as it involves proving boundary-type Strichartz estimates that are not common in the study of Cauchy (initial value) problems.  

The linear analysis, which forms the core of this work, crucially relies on a weak solution formulation defined through the novel solution formulae obtained via the Fokas method (also known as the unified transform) for the associated forced linear problem.  
In this connection, we note that the higher-order Schr\"odinger equation comes with an increased level of difficulty due to the presence of more than one spatial derivatives in the linear part of the equation. This feature manifests itself via several complications throughout the analysis, including  
(i) analyticity issues related to complex square roots, which require careful treatment of branch cuts and deformations of integration contours; 
(ii) singularities that emerge upon changes of variables in the Fourier analysis arguments; 
(iii) complicated oscillatory kernels in the weak solution formula for the linear initial-boundary value problem, which require a subtle analysis of the dispersion in terms of the regularity of the boundary data. 
The present work provides a first, complete treatment via the Fokas method of a nonhomogeneous initial-boundary value problem for a partial differential equation associated with a multi-term linear differential operator.
\end{abstract}

\vspace*{-1cm}
\maketitle

\section{Introduction and main results}

\subsection{\ttfamily\bfseries Mathematical model}
We consider the nonhomogeneous initial-boundary value problem for the higher-order nonlinear Schrödinger (HNLS) equation  on the half line
\begin{equation}\label{nonlinear1}
\begin{aligned}
&iu_t+i\beta u_{xxx}+\alpha u_{xx}+i\delta u_x = f(u),  \quad (x,t)\in\mathbb{R}_{+} \times (0,T),
\\
&u(x,0) = u_0(x), \quad x\in \mathbb{R}_{+}, 
\\
&u(0,t) = g(t), \quad t\in (0,T),
\end{aligned}
\end{equation}
where $\alpha,\delta\in\mathbb{R}$, $\beta>0$, $f(z)=\kappa|z|^pz$ with $z\in\mathbb{C}$, $\kappa\in \mathbb{C}$, $p>0$, and $T>0$. 
The reason why we only need to supplement one boundary condition at $x=0$ is the assumption $\beta>0$. On the other hand, if $\beta<0$ then two boundary conditions are required at $x=0$; this scenario will be considered in a future work.
Furthermore, here we consider the case of a Dirichlet boundary datum; the case of a Neumann datum can be handled via entirely analogous ideas and techniques.

We establish local well-posedness of the nonlinear, nonhomogeneous initial-boundary value problem ~\eqref{nonlinear1} in the sense of Hadamard, namely, we prove existence of a unique local-in-time solution that depends continuously on the initial and boundary data (see Theorems \ref{HighRegThm} and \ref{LowRegThm} below). 
It will be shown (see Theorem \ref{cauchylemma} below) that the evolution operator associated with the free higher-order Schrödinger operator enjoys the following regularity property: 
\begin{equation*}
	\left\|e^{(-\beta\partial_x^3+i\alpha\partial_x^2-\delta\partial_x)t}\right\|_{H^s_x(\mathbb{R})\rightarrow L^\infty_x(\mathbb{R};H_t^{\frac{s+1}{3}}(-T,T))}\le c<\infty.
\end{equation*}
Note that, in the case of the classical second-order Schr\"odinger operator (i.e. for $\alpha=1$ and $\beta=\delta=0$), the time regularity of the solution is described by the Sobolev exponent $\frac{2s+1}{4}$ (see \cite{fhm2017}). Hence, for $s<\frac 12$ (which implies $\frac{2s+1}{4}<\frac{s+1}{3}$) the above result for the higher-order Schr\"odinger operator can be regarded as a kind of smoothing. 
Due to this smoothing, one anticipates that the local well-posedness for the initial-boundary value problem \eqref{nonlinear1}  should be studied with initial data $u_0\in H^s_x(\mathbb{R}_+)$ and boundary data $g\in H_{t,\textnormal{loc}}^{\frac{s+1}{3}}(\mathbb{R}_+)$.  In addition, for $s$ large enough and, more precisely, for $s>\frac 12$, the relevant traces make sense in the aforementioned spaces and one has to also impose compatibility conditions between the initial and the boundary data in order to obtain solutions that are continuous at $t=0$ (see \ref{SecRedibvp} for more details).  

Our treatment of the nonlinear problem \eqref{nonlinear1} is crucially based on a contraction mapping argument applied to a weak solution formula  for the associated forced linear initial-boundary value problem. This novel solution formula is derived in Section \ref{linear-s} via the Fokas method (also known as the unified transform). Showing that the map obtained by replacing the forcing with the power nonlinearity of \eqref{nonlinear1} in the Fokas method solution is a contraction requires further assumptions on the smoothness and growth of the nonlinearity in relation to the Sobolev regularity exponent $s$; see \eqref{sprelation1}  and \eqref{assmponsandp} for the precise assumptions used in this work.

\subsection{\ttfamily\bfseries Physical significance and motivation}
For $\alpha=1$, $\beta=\delta=0$ and $p=2$, the HNLS equation in \eqref{nonlinear1} reduces to the celebrated cubic 
nonlinear Schrödinger equation (NLS) equation. 
Cubic NLS is a ubiquitous model in mathematical physics, with a broad spectrum of applications ranging from optics to water waves to  plasmas to Bose-Einstein condensates. 
However, for pulses in the femtosecond regime, the classical NLS model is not precise enough and a correction involving a higher-order dispersive term is necessary (see \cite{agrawal} for a detailed discussion of the higher-order effects upon the propagation of an optical pulse). 
This need for a more accurate model led to the introduction of the HNLS equation, originally in the form
\begin{equation} \label{hnls_or}
	i u_t + \frac{1}{2} u_{xx} + |u|^2u + i \epsilon  \left(\beta_1 u_{xxx} + \beta_2 (|u|^2u)_x + \beta_3 u |u|^2_x\right) = 0, 
\end{equation}
for modeling the femtosecond pulse propagation in nonlinear optical fibers \cite{koda85,koda87}. 
Note that in \eqref{nonlinear1} we have a power nonlinearity of general order $p>0$, while \eqref{hnls_or}  involves cubic nonlinearities which, nevertheless, contain derivatives.

Furthermore, beyond physical considerations, it should be noted that the cubic NLS equation is a prototypical example of a completely integrable system \cite{zs1972}. As such, in addition to analysis techniques, it has been studied extensively via the inverse scattering transform and related methods. 
This is not possible, however, for general nonlinearities like the one of the HNLS equation in \eqref{nonlinear1}, as the corresponding models are not completely integrable. In those general cases, which nevertheless remain very relevant when it comes to applications, rigorous results can be established via harmonic analysis techniques.
In particular, the well-posedness of the Cauchy (initial value) problem for HNLS has been treated in a number of articles  \cite{Car2003,Carvajal04,Carvajal06,staffilani2,Laurey97,staffilani1,Taka00, Fam22}.  In addition, numerical solutions were obtained in \cite{Caval19}. Moreover, there are some results concerning the controllability properties of this equation; see \cite{Ceballos05} for exact boundary controllability, \cite{Bis07,chen2018} for internal feedback stabilization, and \cite{batal2020,OY2022} for boundary feedback stabilization.

The goal of this work is to establish the local well-posedness theory for the nonlinear initial-boundary value problem \eqref{nonlinear1} at the level of $H^s_x(\mathbb{R}_+)$ spatial regularity for the initial data.  We are interested in both the high regularity ($s>\frac 12$) and the low regularity ($s<\frac 12$) settings.  The main distinction between the two is that, in the low regularity setting, the well-known Banach algebra property of $H^s_x(\mathbb{R}_+)$ is no longer available. Instead, handling the nonlinearity $|u|^p u$ when $s<\frac 12$ requires use of more advanced tools that revolve around the celebrated Strichartz estimates.  Estimates of this type measure the size and temporal decay of solutions in space-time Lebesgue norms and have played a crucial role in the treatment of the Cauchy problem of nonlinear dispersive equations since their introduction in 1977 \cite{s1977}.  On the other hand, the use of Strichartz estimates in the analysis of initial-boundary value problems is a more recent advancement.  For the Cauchy problem, Strichartz estimates involve certain norms of initial and/or interior data, while for initial-boundary value problems these estimates additionally depend on  information related to boundary data, for which temporal regularity also plays a key role.  Such boundary-type Strichartz estimates have been recently established for some initial-boundary value problems associated with dispersive equations, see for instance \cite{h2005,Bona18,KO22} for the one-dimensional NLS, \cite{hm2020,Ran18} for the two-dimensional NLS, \cite{Aud19,Hay21D,Hay21N} for NLS in $n$ dimensions,  \cite{OY19,Cap20} for the one-dimensional biharmonic NLS, and \cite{OAK22} for a fourth-order NLS in one dimension.

\subsection{\ttfamily\bfseries Challenges, methodology, and main results}
The first contribution of the present paper is the development of a sharp linear theory through the analysis of the solutions of the relevant forced linear initial-boundary value problem (see problem \eqref{linear1} below). This is accomplished by decomposing this linear problem into three simpler component problems: (i) a homogeneous Cauchy problem  associated with (an appropriate extension of) the initial datum; (ii) a nonhomogeneous Cauchy problem associated with (an appropriate extension of) the forcing; (iii) a reduced initial-boundary value problem involving the original boundary datum and the spatial traces of the two aforementioned Cauchy problems.

The homogeneous Cauchy problem of item (i) is studied in Section \ref{SecHomCauchy} via classical Fourier analysis.  However, the multi-term nature of the spatial differential operator introduces certain difficulties in the proofs of the temporal estimates, due to the changes of variables performed in order to extract the desired Sobolev norms.  These difficulties  are overcome by introducing a proper cut-off function that depends on the polynomial structure of the spatial differential operator.

The nonhomogeneous Cauchy problem of item (ii) is analyzed in Section \ref{SecNonHomCauchy} by expressing its solution in Duhamel form and then estimating it via the nonlocal (i.e. in the physical space) definition of fractional Sobolev spaces.   We note that, when the equation involves a multi-term spatial differential operator, the analysis of the corresponding nonhomogeneous Cauchy problem becomes quite involved when carried out via other approaches such as the Riemann-Liouville fractional integral method that was successfully applied to the Korteweg-de Vries equation (without the first-order derivative) and the NLS equation \cite{ck2002,h2005,h2006}.  In contrast, the physical space definition of the fractional Sobolev spaces offers a more robust and direct approach in this framework; furthermore, this approach does not require interpolation arguments.

A major emphasis in this work is placed on the regularity analysis of the solution to the reduced initial-boundary value problem of item (iii) above.  This is done in Section \ref{SecRedibvp}. Weak solutions of this reduced initial-boundary value problem are defined via a boundary integral operator whose explicit form is obtained through the Fokas method  \cite{Fokas97,Fokasbook}.  Importantly, in the multi-term framework considered in this paper, certain analyticity issues arise in the application of the Fokas method. This is because the method relies on the construction of analytic maps that respect certain spectral invariance properties of the linear dispersion relation.  However, for multi-term spatial differential operators, such a construction requires use of complex square root functions which, in many cases, cause the invariance maps to be non-analytic on some parts of the complex spectral plane. We handle this complication via suitable contour deformations around the branch cuts associated with these maps. It is worth noting that this phenomenon also appears in the context of higher-dimensional initial-boundary value problems, see for instance \cite{BOF2020}, as well as in equations that involve higher-order time derivatives, e.g. the ``good'' Boussinesq equation analyzed in \cite{hm2015l,hm2015}.  After constructing a suitable boundary integral operator for the reduced initial-boundary value problem, we analyze it by using the oscillatory integral theory which, in particular, requires us to establish dispersive estimates of the same type like the ones satisfied by solutions of the associated Cauchy problem.

The solution of the fully nonlinear problem \eqref{nonlinear1} will be constructed as a fixed point of the solution operator formed by reunifying the respective solution formulae for the three linear problems of  items (i)-(iii) above.  In the high regularity setting of $s>\frac 12$, the spatiotemporal estimates established in the linear theory of Section \ref{linear-s} lead to a contraction mapping argument in the Hadamard-type space $C([0,T];H^s_x(\mathbb{R}_+))$.  The uniqueness in this space utilizes the Sobolev embedding $H^s_x(\mathbb{R}_+)\hookrightarrow L^\infty_x(\mathbb{R}_+)$ (which is valid for $s>\frac12$).  In the low regularity setting of $s<\frac 12$,  the algebra property in $H^s_x(\mathbb{R}_+)$ and the embedding $H^s_x(\mathbb{R}_+)\hookrightarrow L^\infty_x(\mathbb{R}_+)$ are no longer valid and Strichartz estimates assume the key role instead.  In that case, the solution space is refined to $C([0,T];H^s_x(\mathbb{R}_+))\cap L_t^{\mu}((0, T); H_x^{s,r}(\mathbb{R}_+))$ with $(\mu,r)$ obeying the admissibility criterion \eqref{admissiblepair} associated with the underlying evolution operator.  However, this only leads to a conditional uniqueness result in the aforementioned space.

The main results of this work, which emanate from the analysis described above, establish the local well-posedness of the HNLS initial-boundary value problem \eqref{nonlinear1} in the high and low regularity settings and read as follows:
\begin{thm}[High regularity well-posedness]\label{HighRegThm}
Let $\frac 12 < s \leq 2$ and $p>0$. In addition, if $p \notin 2\mathbb Z$, suppose that
\begin{equation}\label{sprelation1}
\begin{aligned}
&\text{if } s\in\mathbb{Z}_+, \text{then } p\geq s \text{ if }p\in\mathbb{Z}_+ \text{ and odd}; \lfloor p\rfloor\geq s-1 \text{ if }p\notin \mathbb{Z}_+,
\\
&\text{if } s\notin\mathbb{Z}_+, \text{then } p>s \text{ if }p \in\mathbb{Z}_+ \text{ and odd};\lfloor p\rfloor\geq \lfloor s\rfloor \text{ if }p\notin \mathbb{Z}_+.
\end{aligned}
\end{equation}
Then, for initial data $u_0\in H_x^s(\mathbb{R}_+)$ and boundary data $g\in H_{t,\textnormal{loc}}^{\frac{s+1}{3}}(\mathbb{R}_+)$ satisfying the compatibility condition \eqref{comp-cond}, there is $T=T(u_0,g)>0$ such that the initial-boundary value problem \eqref{nonlinear1} for the HNLS equation on the half-line has a unique solution $u\in C([0,T];H_x^s(\mathbb{R}_+))$. Furthermore, this solution depends continuously on the initial and boundary data.
\end{thm}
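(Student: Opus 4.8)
The plan is to recast \eqref{nonlinear1} as a fixed-point equation for the integral operator coming from the Fokas-method solution formula of the forced linear initial-boundary value problem \eqref{linear1}, and to run a contraction (Picard) argument in a ball of the Hadamard space $C([0,T];H^s_x(\mathbb{R}_+))$. With $f(u)=\kappa|u|^pu$ in the role of the forcing, write the candidate solution as
\begin{equation*}
\Phi(u)=\Phi_{0}(u_0)+\Phi_{C}\big(f(u)\big)+\Phi_{B}\big(g;u_0,f(u)\big),
\end{equation*}
where $\Phi_0$ is the homogeneous Cauchy evolution of a fixed bounded $H^s(\mathbb{R})$-extension of $u_0$ (Section~\ref{SecHomCauchy}), $\Phi_C$ is the linear nonhomogeneous Cauchy/Duhamel term generated by an $H^s(\mathbb{R})$-extension of $f(u)$ (Section~\ref{SecNonHomCauchy}), and $\Phi_B$ is the boundary integral operator of the reduced initial-boundary value problem (Section~\ref{SecRedibvp}), evaluated at the boundary datum $g$ minus the $x=0$ traces of $\Phi_0$ and $\Phi_C$. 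By construction a fixed point of $\Phi$ is precisely the weak solution of \eqref{nonlinear1}, so it suffices to produce one. The linear theory of Section~\ref{linear-s}, together with the time-regularity bound of Theorem~\ref{cauchylemma} used to control the trace of $\Phi_0$ in $H^{\frac{s+1}{3}}_t$, yields the master a priori estimate
\begin{equation*}
\norm{\Phi(u)}_{C([0,T];H^s_x(\mathbb{R}_+))}\le c\Big(\norm{u_0}_{H^s_x(\mathbb{R}_+)}+\norm{g}_{H^{\frac{s+1}{3}}_t(0,1)}+T^{\theta}\sup_{t\in[0,T]}\norm{f(u(t))}_{H^s_x(\mathbb{R}_+)}\Big)
\end{equation*}
for some $\theta>0$ and some $c=c(s)$, uniformly in $T\le1$, and — since $\Phi(u)-\Phi(v)$ depends linearly on $f(u)-f(v)$ — the companion difference estimate
\begin{equation*}
\norm{\Phi(u)-\Phi(v)}_{C([0,T];H^s_x(\mathbb{R}_+))}\le cT^{\theta}\sup_{t\in[0,T]}\norm{f(u(t))-f(v(t))}_{H^s_x(\mathbb{R}_+)}.
\end{equation*}

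The heart of the matter is thus the pair of nonlinear estimates
\begin{equation*}
\norm{f(u)}_{H^s_x(\mathbb{R}_+)}\le c\,\norm{u}_{H^s_x(\mathbb{R}_+)}^{p+1},\qquad
\norm{f(u)-f(v)}_{H^s_x(\mathbb{R}_+)}\le c\big(\norm{u}_{H^s_x(\mathbb{R}_+)}^{p}+\norm{v}_{H^s_x(\mathbb{R}_+)}^{p}\big)\norm{u-v}_{H^s_x(\mathbb{R}_+)}
\end{equation*}
for $\tfrac12<s\le2$ under \eqref{sprelation1}. When $p\in2\mathbb{Z}$, $|z|^pz=(z\bar z)^{p/2}z$ is a polynomial in $(z,\bar z)$ and both bounds follow at once from the Banach algebra property of $H^s_x(\mathbb{R}_+)$, $s>\tfrac12$ (combined with $H^s_x(\mathbb{R}_+)\hookrightarrow L^\infty_x(\mathbb{R}_+)$ for the difference). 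When $p\notin2\mathbb{Z}$ the map $z\mapsto|z|^pz$ is only finitely smooth at the origin, so one must instead invoke composition (Moser-type) estimates in fractional Sobolev spaces; pushing $s$ — possibly non-integer — derivatives through $|z|^pz$ imposes a lower bound on the available Hölder regularity of the nonlinearity, which is precisely what \eqref{sprelation1} encodes, and the single extra derivative needed to handle the difference $f(u)-f(v)$ accounts for the slightly stronger conditions in the non-integer-$s$ and odd-integer-$p$ cases. \emph{This nonlinear step is where I expect the main difficulty to lie}: the remainder is the standard contraction machinery, whereas here the limited regularity of $|z|^pz$ near $z=0$ must be tracked carefully against $s$.

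Granting these, set $R:=2c\big(\norm{u_0}_{H^s_x(\mathbb{R}_+)}+\norm{g}_{H^{\frac{s+1}{3}}_t(0,1)}\big)$ and let $B_R$ be the closed ball of radius $R$ in $C([0,T];H^s_x(\mathbb{R}_+))$ with $T\le1$. For $u\in B_R$ the master estimate and the first nonlinear bound give $\norm{\Phi(u)}_{C([0,T];H^s_x)}\le\tfrac R2+cT^{\theta}R^{p+1}\le R$ once $T=T(R,p,s)$ is small, while the compatibility condition \eqref{comp-cond} ensures the iterates (hence the limit) lie in $C([0,T];H^s_x(\mathbb{R}_+))$ and attain $u_0$ at $t=0$, the boundary condition $u(0,\cdot)=g$ being encoded in the weak formulation. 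For $u,v\in B_R$, the difference estimate and the second nonlinear bound give $\norm{\Phi(u)-\Phi(v)}_{C([0,T];H^s_x)}\le cT^{\theta}R^{p}\norm{u-v}_{C([0,T];H^s_x)}\le\tfrac12\norm{u-v}_{C([0,T];H^s_x)}$ after possibly shrinking $T$, so by the Banach fixed-point theorem $\Phi$ has a unique fixed point $u\in B_R$, the desired solution. To upgrade to uniqueness in all of $C([0,T];H^s_x(\mathbb{R}_+))$, given two solutions $u,v$ with the same data put $M:=\max\{\norm{u}_{C([0,T];H^s_x)},\norm{v}_{C([0,T];H^s_x)}\}<\infty$; applying the difference estimate on a subinterval of length $\tau$ forces $u\equiv v$ there as soon as $c\tau^{\theta}(2M)^p<1$, and covering $[0,T]$ by finitely many such subintervals yields $u\equiv v$.

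Finally, for continuous dependence let $(u_0^n,g^n)\to(u_0,g)$ in $H^s_x(\mathbb{R}_+)\times H^{\frac{s+1}{3}}_{t,\textnormal{loc}}(\mathbb{R}_+)$. Being uniformly bounded, these data yield, via the construction above, a common existence time $T$ and a common ball $B_R$ containing all the solutions $u^n$ and $u$. Decomposing $u^n-u$ into the data-linear contributions (controlled by $\norm{u_0^n-u_0}_{H^s_x(\mathbb{R}_+)}$ and $\norm{g^n-g}_{H^{\frac{s+1}{3}}_t(0,1)}$ through the boundedness of the linear maps appearing in $\Phi_0$ and $\Phi_B$) and the forcing-linear contribution involving $f(u^n)-f(u)$, and invoking the master and difference estimates together with the second nonlinear bound, one obtains
\begin{equation*}
\norm{u^n-u}_{C([0,T];H^s_x)}\le c\big(\norm{u_0^n-u_0}_{H^s_x(\mathbb{R}_+)}+\norm{g^n-g}_{H^{\frac{s+1}{3}}_t(0,1)}\big)+cT^{\theta}R^{p}\,\norm{u^n-u}_{C([0,T];H^s_x)}.
\end{equation*}
Since $cT^{\theta}R^{p}<1$ the last term is absorbed into the left-hand side, and letting $n\to\infty$ gives $u^n\to u$ in $C([0,T];H^s_x(\mathbb{R}_+))$, completing the proof.
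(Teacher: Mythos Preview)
Your proposal is correct and follows essentially the same approach as the paper: decompose the forced linear problem into homogeneous Cauchy, Duhamel, and reduced boundary pieces, combine the linear estimates of Section~\ref{linear-s} with the $H^s$-algebra/composition bounds for $|u|^pu$ (which is exactly Lemma~\ref{nonlinearitylemma} in the paper), and run a contraction in $C([0,T];H^s_x(\mathbb{R}_+))$.

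The one substantive difference is how you promote uniqueness from the contraction ball to all of $C([0,T];H^s_x(\mathbb{R}_+))$. The paper does this via an $L^2$ energy identity: set $w=u_1-u_2$, multiply the equation by $\bar w$, integrate in $x$, take imaginary parts, use $\beta>0$ to drop the boundary term $-\tfrac{\beta}{2}|w_x(0,t)|^2$, and close by Gr\"onwall together with $H^s_x\hookrightarrow L^\infty_x$; a mollification step handles rough solutions. Your time-stepping argument is more elementary and stays entirely within the fixed-point framework, but it relies on being able to \emph{restart} the operator $\Phi$ at an intermediate time $t_0$ with data $(u(t_0),\,g(\cdot+t_0))$ and recover the same difference estimate on $[0,\tau]$; this time-translation property of the Fokas representation is true but is not verified elsewhere in the paper, so you would need to justify it. The energy-estimate route avoids that bookkeeping at the cost of the density/mollification argument.
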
	
\begin{thm}[Low regularity well-posedness]
\label{LowRegThm}
Suppose 
\begin{equation}\label{assmponsandp}
0 \leq s < \frac12, \quad 1\le p\leq \frac{6}{1-2s}, 
\quad
\mu=\frac{6(p+1)}{p(1-2s)}, \quad r=\frac{2(p+1)}{1+2sp}.
\end{equation}
Then, for initial data $u_0\in H_x^s(\mathbb{R}_+)$ and boundary data $g\in H_{t,\textnormal{loc}}^{\frac{s+1}{3}}(\mathbb{R}_+)$, with the additional assumption that if $p=\frac{6}{1-2s}$ (critical case) then $\|u_0\|_{H^s_x(\mathbb{R}_+)}$ is sufficiently small, there is $T=T(u_0,g)>0$ such that the initial-boundary value problem \eqref{nonlinear1} for the HNLS equation on the half-line has a unique solution $u\in C([0,T];H^s_x(\mathbb{R}_+))\cap L^\mu_t((0, T); H_x^{s,r}(\mathbb{R}_+))$. Furthermore, this solution depends continuously on the initial and boundary data.
\end{thm}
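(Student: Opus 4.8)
\emph{Plan.} The plan is to obtain the solution as the unique fixed point of the map built by feeding the power nonlinearity into the Fokas-method solution formula for the forced linear initial-boundary value problem \eqref{linear1}. Following the decomposition into items (i)--(iii) of the Introduction, for a forcing $h=h(x,t)$ write the corresponding weak solution of \eqref{linear1} with data $u_0,g$ as the sum of the homogeneous Cauchy evolution $e^{(-\beta\partial_x^3+i\alpha\partial_x^2-\delta\partial_x)t}U_0$ of an $H^s_x(\Real)$-extension $U_0$ of $u_0$, the Duhamel term built from an extension $H$ of $h$, and the boundary operator of the reduced initial-boundary value problem of Section~\ref{SecRedibvp} evaluated at $g$ minus the $x=0$ temporal traces of the two preceding terms; call this sum $\Phi_{\mathrm{lin}}[u_0,g,h]$. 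I would then put $\Phi(u):=\Phi_{\mathrm{lin}}[u_0,g,f(u)]$ with $f(u)=\kappa\abs{u}^{p}u$, so that a fixed point of $\Phi$ solves \eqref{nonlinear1}, and look for it in the closed ball $B_R=\set{u\in X_T:\norm{u}_{X_T}\le R}$ of
\[ X_T:=C\big([0,T];H^s_x(\Real_+)\big)\cap L^\mu_t\big((0,T);H^{s,r}_x(\Real_+)\big), \]
with $R$ comparable to $1+\norm{u_0}_{H^s_x(\Real_+)}+\norm{g}_{H_t^{(s+1)/3}(0,T)}$ and $T$ small, both fixed at the end; since $s<\tfrac12$ no compatibility condition is imposed and the extensions $U_0,H$ are the standard bounded ones in the range of regularity below $\tfrac12$.

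\emph{Linear estimates and the nonlinear bound.} The first step is to collect, from the linear theory of Section~\ref{linear-s}, the bounds of the three components above in both the energy space $C([0,T];H^s_x(\Real_+))$ and the Strichartz space $L^\mu_t((0,T);H^{s,r}_x(\Real_+))$ (the latter available by the admissibility criterion \eqref{admissiblepair} for the higher-order Schrödinger evolution), together with the temporal-trace bound of Theorem~\ref{cauchylemma} which controls the $x=0$ traces of the Cauchy pieces in $H_t^{(s+1)/3}$ and thus feeds the reduced-problem operator; reuniting them gives, for $u,v\in B_R$,
\[ \norm{\Phi(u)}_{X_T}\lesssim\norm{u_0}_{H^s_x(\Real_+)}+\norm{g}_{H_t^{(s+1)/3}(0,T)}+\big\|f(u)\big\|_{\mathcal{Z}_T},\qquad\norm{\Phi(u)-\Phi(v)}_{X_T}\lesssim\big\|f(u)-f(v)\big\|_{\mathcal{Z}_T}, \]
where $\mathcal{Z}_T$ is the dual forcing space dictated by the Duhamel estimate; the exponents of \eqref{assmponsandp} are arranged so that one may take $\mathcal{Z}_T=L^1_t\big((0,T);H^s_x(\Real_+)\big)$, i.e. the nonlinearity only has to be placed in an $L^2$-based Sobolev space in $x$, which is exactly what lets the energy and the Strichartz estimates close at once. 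The second, and central, step is the nonlinear estimate
\[ \big\|f(u)-f(v)\big\|_{\mathcal{Z}_T}\lesssim T^{\theta}\big(\norm{u}_{X_T}^{p}+\norm{v}_{X_T}^{p}\big)\norm{u-v}_{X_T},\qquad\theta=1-\tfrac{p(1-2s)}{6}\ge0, \]
with $\theta>0$ in the subcritical range and $\theta=0$ only when $p=\tfrac{6}{1-2s}$. I would prove it starting from the pointwise inequality $\big|\abs{z_1}^{p}z_1-\abs{z_2}^{p}z_2\big|\lesssim\big(\abs{z_1}^{p}+\abs{z_2}^{p}\big)\abs{z_1-z_2}$, valid for $p\ge1$ (this is where the hypothesis $p\ge1$ enters), then using the fractional Leibniz / Kato--Ponce inequality to distribute the $s$ spatial derivatives and the Sobolev embedding $H^{s,r}_x(\Real_+)\hookrightarrow L^{q}_x(\Real_+)$ with $\tfrac1q=\tfrac1r-s$ --- admissible since $sr=\tfrac{2s(p+1)}{1+2sp}<1$ under \eqref{assmponsandp} --- so that at fixed $t$ one gets a bound by $(\norm{u}_{L^q_x}^{p}+\norm{v}_{L^q_x}^{p})\norm{u-v}_{H^{s,r}_x}$, the underlying Hölder relation $\tfrac{p}{q}+\tfrac1r=\tfrac12$ being precisely the one the exponents of \eqref{assmponsandp} satisfy; a further Hölder in $t$ on $(0,T)$ then yields the power $T^\theta$, and the value $\mu=\tfrac{6(p+1)}{p(1-2s)}$ makes $\theta$ nonnegative with equality exactly at the critical power.

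\emph{Closing the argument.} Combining the last two displays, $\Phi$ maps $B_R$ to itself and is a contraction once $R$ has the indicated size and $T$ is small: in the subcritical range $1\le p<\tfrac{6}{1-2s}$ the factor $T^{\theta}$ with $\theta>0$ absorbs the nonlinear terms for $T$ small; in the critical case $p=\tfrac{6}{1-2s}$ there is no $T$-gain, so I would instead impose that $\norm{u_0}_{H^s_x(\Real_+)}$ be small and run the fixed point on a ball whose Strichartz radius is correspondingly small, using that the contributions of $g$ and of the Cauchy traces to the critical norm still tend to $0$ as $T\to0$ because $\tfrac{s+1}{3}>0$. The resulting fixed point $u\in X_T$ is the desired solution; its uniqueness is only \emph{conditional}, namely within $X_T$, since the difference estimate genuinely uses the $L^\mu_t((0,T);H^{s,r}_x(\Real_+))$ norm and, in contrast with the case $s>\tfrac12$ of Theorem~\ref{HighRegThm}, no $L^\infty_x$ control is available from $C([0,T];H^s_x(\Real_+))$ alone. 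Continuous dependence follows by applying the same Lipschitz bounds to two data sets $(u_0,g)$ and $(\widetilde u_0,\widetilde g)$ and the corresponding fixed points, yielding $\norm{u-\widetilde u}_{X_T}\lesssim\norm{u_0-\widetilde u_0}_{H^s_x(\Real_+)}+\norm{g-\widetilde g}_{H_t^{(s+1)/3}(0,T)}$.

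\emph{Main obstacle.} The abstract iteration is routine; the delicate point is the consistency of the nonlinear bookkeeping with the linear theory at the level of the reduced initial-boundary value problem --- one needs a boundary-type Strichartz estimate placing the Fokas boundary operator into the \emph{same} space $L^\mu_t((0,T);H^{s,r}_x(\Real_+))$ of \eqref{assmponsandp}, with a loss measured solely by $\norm{g}_{H_t^{(s+1)/3}(0,T)}$, for the pair $(\mu,r)$ that at the same time dominates $f(u)$ through the Hölder computation above. Reconciling the oscillatory-integral analysis of the Fokas kernel --- with the branch-cut deformations and the change-of-variables singularities flagged in the Introduction --- with the admissibility constraint \eqref{admissiblepair} is what forces the precise form of $(\mu,r)$ and pins down the threshold $p=\tfrac{6}{1-2s}$; I expect this to carry essentially all the work.
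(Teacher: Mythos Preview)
Your proposal is correct and follows essentially the same route as the paper: the same solution space (the paper calls it $Y_T$), the same forcing space $L^1_t((0,T);H^s_x)$ for the nonlinearity handled via the fractional Leibniz rule and the Sobolev embedding $H^{s,r}_x\hookrightarrow L^q_x$ with the H\"older relation $\tfrac{p}{q}+\tfrac{1}{r}=\tfrac12$ (your exponent $\theta=1-\tfrac{p(1-2s)}{6}$ is exactly the paper's $\tfrac{\mu-p-1}{\mu}$), and the same subcritical/critical split. The one point you leave implicit is that uniqueness in all of $X_T$, not just in the contraction ball, requires a short additional argument---the paper localizes in time near the infimum of the set where two putative solutions differ, in the style of Cazenave--Weissler---but this is standard.
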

The linear estimates that form the core of the proofs of the above two theorems are established in Section \ref{linear-s}, and the contraction mapping arguments that complete those proofs are provided in Section~\ref{nonlinear-s}.

\subsection{$\texttt{The Fokas method for the rigorous treatment of initial-boundary value problems}$}
While the Cauchy problem for nonlinear dispersive equations has been  broadly explored through a variety of techniques, progress towards the rigorous study of initial-boundary value problems for these equations is more limited. In fact, problems of this latter kind can present significant challenges even at the linear level. For example, while on the whole line linear evolution equations can be easily solved via Fourier transform in the space variable, on domains with a boundary like the half-line no classical spatial transform exists for linear equations of spatial order three or higher. Another important obstacle arises in  the case of  boundary conditions that are non-separable. Moreover, even when a linear initial-boundary value problem can be solved via classical techniques, the resulting solution formula is not always useful, especially in regard to setting up an effective iteration scheme for proving the well-posedness of associated nonlinear problems. 

At the linear level, the Fokas method bridges the gap between the Cauchy problem and initial-boundary value problems by providing the direct analogue of the Fourier transform in domains with a boundary. Indeed, the method provides a fundamentally novel, algorithmic way of solving any linear evolution equation formulated on a variety of domains in one or higher dimensions and supplemented with any kind of admissible boundary conditions. 
An alternative perspective that further establishes the Fokas method as the natural counterpart of the Fourier transform in the context of linear initial-boundary value problems stems from the nonlinear component of the method, which was developed for completely integrable nonlinear equations and corresponds to the analogue of the inverse scattering transform in domains with a boundary. Then, noting that the linear limit of the inverse scattering transform is nothing but the Fourier transform, it is only reasonable that the linear limit of the nonlinear component of the Fokas method, namely the linear Fokas method, should provide the equivalent  of the Fourier transform for linear initial-boundary value problems.

The analogy between the Fokas method and the Fourier transform has been solidified by a new approach introduced in recent years by Himonas and one of the authors for the well-posedness of nonlinear initial-boundary value problems. This approach is based on treating the nonlinear problem as a perturbation of its forced linear counterpart, which is of course a classical idea coming from the Cauchy problem. However, the linear formulae produced via the Fourier transform in the case of the Cauchy problem are now replaced by the Fokas method solution formulae (recall that Fourier transform is no longer available). As these novel formulae involve complex contours of integration, new tools and techniques are required in order to obtain the various linear estimates needed for the contraction mapping argument. It should be noted that several of these estimates are specific to initial-boundary value problems and do not typically arise in the study of the Cauchy problem; they are results of particular importance, as they capture the effect of the boundary conditions on the regularity of the solution of both linear and nonlinear problems.

The Fokas method based approach for the rigorous study of initial-boundary value problems has already been implemented in several works in the literature: NLS on the half-line and the half-plane \cite{fhm2017,hmy2019-nls,hm2021,hm2020,hm2022,KO22}, KdV on the half-line and the finite interval \cite{fhm2016,hmy2019-kdv,hmy2021,hy2022a}, ``good'' Boussinesq on the half-line \cite{hm2015}, biharmonic NLS on the half-line \cite{OY19}, fourth order Schrödinger equation on the half-line \cite{OAK22}, a higher-dispersion KdV on the half-line \cite{hy2022b}, and even non-dispersive models  \cite{hmy2019-rd,cm2022}.  It should be noted that, in those cases where a problem has been previously considered in the literature, the results via the new method are consistent with the existing ones, typically obtained via the Colliander-Kenig-Holmer or the Bona-Sun-Zhang approaches, e.g. see  \cite{bsz2002,ck2002,h2005,h2006,et2016,BO2016,ct2017,Bona18,Ran18,Aud19,OY2022}.
Finally, we remark that a certain third-order model with cubic nonlinearity which is related to equations \eqref{nonlinear1} and \eqref{hnls_or} and is known as the Hirota equation has also been considered in the literature in the context of initial-boundary value problems, see \cite{h2020,gw2021,gw2023}. However, it is important to emphasize that in the present work we treat the case of a general power nonlinearity and, furthermore, we study the associated boundary integral operator at the low regularity level of Strichartz estimates.

We conclude by noting that rigorous treatment of initial-boundary value problems through the Fokas formulae has not only played an important role in establishing well-posedness results; it has also given insight towards solving problems that stem from other related fields such as systems theory and control, since there is a close connection between regularity theory and controllability. It is well-known that initial-boundary value problems with nonhomogeneous boundary conditions can be used to model physical evolutions in which the boundary input acts as control. Such boundary control models are particularly important for governing dynamics of physical processes in which access to the interior of a medium is blocked or not feasible while manipulations through the boundary remain an efficient choice.  See, for instance, \cite{KalOz2020, KalOz2023,KalOz2023-2} for some recent applications of the Fokas method to boundary control problems related to the heat and Schrödinger equations.
	
\section{Linear theory}
\label{linear-s}
In this section, we study the forced linear initial-boundary value problem
\begin{equation}\label{linear1}
\begin{aligned}
&iu_t+i\beta u_{xxx}+\alpha u_{xx}+i\delta u_x = f,  \quad (x,t)\in\mathbb{R}_{+} \times (0,T),
\\
&u(x,0) = u_0(x), \quad x\in \mathbb{R}_{+}, 
\\
&u(0,t) = g(t), \quad t\in (0,T),
\end{aligned}
\end{equation}
where $\alpha,\delta\in\mathbb{R}$ and $\beta>0$.  The analysis of the linear problem \eqref{linear1} will be carried out via a \emph{decomposition-reunification} approach.  This decomposition allows us to split the problem into three simpler components, two of which are Cauchy problems on the real line with data associated with $u_0$ and $f$, respectively, and one of which is a (reduced) initial-boundary value problem with data associated with $g$ as well as the traces of the solutions of the aforementioned Cauchy problems at $x=0$.	

\subsection{\ttfamily\bfseries Homogeneous linear Cauchy problem}\label{SecHomCauchy}
Consider the problem
\begin{equation}\label{cauchy}
\begin{aligned}
&iy_t+i\beta y_{xxx}+\alpha y_{xx}+i\delta y_x = 0,  \quad (x,t)\in\mathbb{R} \times \mathbb{R},
\\
&y(x,0) = y_0(x), \quad x\in \mathbb{R},
\end{aligned}
\end{equation}
where $y_0=E_0u_0$ denotes an extension of $u_0$ with respect to a fixed bounded extension operator $E_0: H_x^{s}(\mathbb{R}_+)\rightarrow H_x^s(\mathbb{R})$, namely we have
\begin{equation}\label{e0-def}
\|E_0u_0\|_{H_x^s(\mathbb{R})}\lesssim \|u_0\|_{H_x^s(\mathbb{R}_+)}.
\end{equation}
\begin{thm}\label{cauchylemma}
Let $s\in\mathbb R$. The unique solution of the Cauchy problem \eqref{cauchy}, denoted by $y=S[y_0;0]$, belongs to $C(\mathbb{R}_t;H_x^s(\mathbb{R}))$ and satisfies the conservation law
		\begin{equation}\label{cuachyspaceest}
			\left\| y(\cdot ,t) \right\|_{H_x^s(\mathbb{R})}= \left\| y_0 \right\|_{H_x^s(\mathbb{R})},\quad t\in\mathbb{R}.
		\end{equation}
		Moreover, if  $\alpha^2+3\beta\delta\ge 0$, then $y\in C(\mathbb{R}_x;H_t^{\frac{s+1}{3}}(-T,T))$ for $T>0$ and there exists a constant $c=c(s,\alpha,\beta,\delta)\geq 0$ such that
		\begin{equation}\label{cauchyextra3}
			\sup_{x\in\mathbb{R}}\left\|y(x,\cdot)\right\|_{H_t^{\frac{s+1}{3}}(-T,T)}\leq c(1+T^{\frac12}) \left\|y_0\right\|_{H_x^s(\mathbb{R})},
		\end{equation}
while if $\alpha^2+3\beta\delta<0,$ then $y\in C(\mathbb{R}_x;H_t^{\frac{s+1}{3}}(\mathbb{R}))$ and there is a constant $c=c(s,\alpha,\beta,\delta)\geq 0$ such that
		\begin{equation}\label{cauchyextra3*}
			\sup_{x\in\mathbb{R}}\left\|y(x,\cdot)\right\|_{H_t^{\frac{s+1}{3}}(\mathbb{R})}\leq c \left\|y_0\right\|_{H_x^s(\mathbb{R})}.
		\end{equation}
	\end{thm}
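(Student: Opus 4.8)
The plan is to solve \eqref{cauchy} by Fourier transform in $x$, obtaining
$$
y(x,t) = \frac{1}{2\pi}\int_{\mathbb{R}} e^{i\xi x} e^{i t\,\omega(\xi)}\, \widehat{y_0}(\xi)\, d\xi,
\qquad
\omega(\xi) = \beta\xi^3 - \alpha\xi^2 + \delta\xi,
$$
where $\omega$ is the (real) dispersion relation obtained from the symbol of the operator $-\beta\partial_x^3 + i\alpha\partial_x^2 - \delta\partial_x$. The spatial estimate \eqref{cuachyspaceest} is then immediate from Plancherel, since $|e^{it\omega(\xi)}|=1$; this also gives $y\in C(\mathbb{R}_t;H^s_x(\mathbb{R}))$ by dominated convergence. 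The substance of the theorem is the temporal estimate, i.e. controlling $\|y(x,\cdot)\|_{H^{\frac{s+1}{3}}_t}$ uniformly in $x$.

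For the temporal regularity, fix $x$ and view $t\mapsto y(x,t)$ as an inverse Fourier transform in $t$ after the change of variables $\xi\mapsto\tau=\omega(\xi)$. Since $\omega'(\xi)=3\beta\xi^2-2\alpha\xi+\delta$ has discriminant $4(\alpha^2-3\beta\delta)$, the sign condition $\alpha^2+3\beta\delta\ge0$ is \emph{not} quite what makes $\omega$ monotone — rather, one must split the $\xi$-integral into the (at most three) intervals of monotonicity of $\omega$ determined by the real roots of $\omega'$, and on each such branch perform the substitution $\tau=\omega(\xi)$, $d\tau=\omega'(\xi)\,d\xi$. On each branch we get
$$
\widehat{(y(x,\cdot))}(\tau) = e^{i\xi_j(\tau) x}\,\widehat{y_0}(\xi_j(\tau))\,\frac{1}{\omega'(\xi_j(\tau))},
$$
where $\xi_j(\tau)$ is the relevant branch of $\omega^{-1}$. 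The $H^{\frac{s+1}{3}}_t$ norm requires estimating $\int \langle\tau\rangle^{\frac{2(s+1)}{3}} |\widehat{y_0}(\xi_j(\tau))|^2\,|\omega'(\xi_j(\tau))|^{-2}\,d\tau$; undoing the substitution this becomes $\int \langle\omega(\xi)\rangle^{\frac{2(s+1)}{3}} |\widehat{y_0}(\xi)|^2\,|\omega'(\xi)|^{-1}\,d\xi$, and since $|\omega(\xi)|\sim|\xi|^3$ and $|\omega'(\xi)|\sim|\xi|^2$ for large $|\xi|$, the integrand behaves like $\langle\xi\rangle^{2(s+1)}|\xi|^{-2}|\widehat{y_0}(\xi)|^2 \sim \langle\xi\rangle^{2s}|\widehat{y_0}(\xi)|^2$, which is exactly $\|y_0\|_{H^s_x}^2$. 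The factor $1+T^{\frac12}$ and the restriction to a finite interval $(-T,T)$ in the case $\alpha^2+3\beta\delta\ge0$ enter because near the (finitely many) critical points of $\omega$, where $\omega'$ vanishes, the factor $|\omega'(\xi_j(\tau))|^{-1}$ is singular and non-integrable in $\tau$; one localizes near these points with a cutoff, and there the $H^{\frac{s+1}{3}}_t$ norm over a bounded time interval is controlled by a lower-order norm (e.g. $L^2_t(-T,T)$ after multiplying by a compactly supported smooth function of $t$), producing the polynomial-in-$T$ loss. When $\alpha^2+3\beta\delta<0$, $\omega'$ has no real zeros, $\omega$ is a global diffeomorphism of $\mathbb{R}$, $|\omega'|$ is bounded below, and the estimate holds globally in $t$ with no $T$-dependence — this yields \eqref{cauchyextra3*}.

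The main obstacle is precisely the treatment of the critical points of $\omega$ in the case $\alpha^2+3\beta\delta\ge0$: the change of variables degenerates there, the symbol $\langle\tau\rangle^{\frac{s+1}{3}}$ is comparable to a constant near such a point (since $\omega$ is bounded there), and the challenge is to absorb the degenerate Jacobian. The resolution I would use is to introduce a smooth partition of unity in $\xi$ adapted to the polynomial structure of $\omega$ — one piece supported away from all zeros of $\omega'$ (where the clean change of variables above applies verbatim), and finitely many pieces each supported in a small neighborhood of a single zero of $\omega'$. On the latter pieces, $\widehat{y_0}$ restricted to a bounded $\xi$-interval lies in every $H^\sigma$ with norm controlled by $\|y_0\|_{L^2}\le\|y_0\|_{H^s}$ (for $s\ge0$; for $s<0$ one instead notes $\langle\xi\rangle$ is bounded on the support so $\|\cdot\|_{L^2}\lesssim\|\cdot\|_{H^s}$ fails and one must argue slightly differently, but the statement only needs $s\in\mathbb{R}$ with the caveat that near critical points $\langle\tau\rangle$ is $O(1)$ so only an $L^2_t$-type bound is needed), and the corresponding contribution to $y(x,t)$ is, after multiplication by a temporal cutoff $\chi(t/T)$, a smooth-in-$t$ function whose $H^{\frac{s+1}{3}}_t(-T,T)$ norm is bounded by $c(1+T^{1/2})\|y_0\|_{H^s_x}$ via a direct Sobolev estimate on the oscillatory integral $\int_{|\xi-\xi_{\mathrm{crit}}|<\epsilon} e^{i\xi x}e^{it\omega(\xi)}\widehat{y_0}(\xi)\,d\xi$ (the $x$-dependence is harmless since $|e^{i\xi x}|=1$ and we only need uniformity in $x$). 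Summing the finitely many pieces gives \eqref{cauchyextra3}. Continuity in $x$ of $y(x,\cdot)$ as an $H^{\frac{s+1}{3}}_t$-valued map follows in both cases from dominated convergence applied to the same integral representation.
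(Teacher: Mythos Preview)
Your approach is essentially the paper's own: split off a compactly supported piece in frequency near the critical points of the dispersion (handled directly on $(-T,T)$, which is where the $T^{1/2}$ appears), and on the complement perform the change of variables $\tau=\omega(\xi)$ using $|\omega(\xi)|\sim|\xi|^3$, $|\omega'(\xi)|\sim|\xi|^2$ to recover the $H^s_x$ norm.

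There is, however, a sign slip that causes your puzzlement about the condition $\alpha^2+3\beta\delta\ge0$. From $iu_t+i\beta u_{xxx}+\alpha u_{xx}+i\delta u_x=0$ one gets (after dividing by $i$ and taking the spatial Fourier transform) the phase $\beta\xi^3-\alpha\xi^2-\delta\xi$, not $\beta\xi^3-\alpha\xi^2+\delta\xi$. Thus $\omega'(\xi)=3\beta\xi^2-2\alpha\xi-\delta$, whose discriminant is $4(\alpha^2+3\beta\delta)$, and the theorem's dichotomy is \emph{exactly} the dichotomy between $\omega'$ having real zeros (so the change of variables degenerates and one needs the cutoff plus the finite-time interval) versus $\omega'>0$ everywhere (so $\omega$ is a global diffeomorphism and one gets the estimate on all of $\mathbb{R}_t$). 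Once you fix this sign, your remark ``the sign condition $\alpha^2+3\beta\delta\ge0$ is not quite what makes $\omega$ monotone'' disappears: it is precisely that.

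Your hedging about $s<0$ on the near-critical piece is also unnecessary: since the cutoff has compact $\xi$-support, $\int_{\mathrm{supp}\,\theta}|\widehat{y_0}|^2\,d\xi \lesssim \int_{\mathrm{supp}\,\theta}(1+\xi^2)^s|\widehat{y_0}|^2\,d\xi \le \|y_0\|_{H^s_x}^2$ for \emph{every} $s\in\mathbb{R}$, because $(1+\xi^2)^{-s}$ is bounded on compact sets. The paper makes this explicit by bounding $|\partial_t^j y_1|$ pointwise via Cauchy--Schwarz against $(1+\xi^2)^{-s/2}$, which works uniformly in $s$.
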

	\begin{proof}
		Taking the Fourier transform of \eqref{cauchy} with respect to $x$,  we find $\widehat{y}(k,t)=e^{-\omega(k)t} \, \widehat{y}_0(k)$ where
		\begin{equation}\label{wkdef}
			\omega(k):=-i\beta k^3+i \alpha k^2+i\delta k.
		\end{equation}
		 For $k\in\mathbb R$, $\omega(k)$ is purely imaginary thus $|\widehat{y}(k,t)|=|\widehat{y}_0(k)|$ and the conservation law \eqref{cuachyspaceest} readily follows via Plancherel's theorem.  The continuity of the map $t\mapsto y(t)$ from $[0,T]$ into $H_x^s(\mathbb{R})$ follows from the dominated convergence theorem and the fact that $y_0\in H_x^s(\mathbb{R})$. 
		 
In order to prove the temporal estimates \eqref{cauchyextra3} and \eqref{cauchyextra3*}, we start from the Fourier transform solution representation 
		\begin{equation}\label{c1}
			y(x,t)=S[y_0;0](x,t)=\frac{1}{2\pi}\int_{\mathbb{R}}e^{ikx-\omega(k)t}\, \widehat{y}_0(k) dk.
		\end{equation}
		Consider the real-valued map $\tau=i\omega(k)$. Notice that if $\alpha^2+3\beta\delta\le 0$ then $\tau$ is monotone increasing and so $k=(i\omega)^{-1}(\tau)$ is well-defined. In the case of strict inequality $\alpha^2+3\beta\delta<0$, we observe that $i\omega'(k)>-(\alpha^2+3\beta \delta)/(3\beta)>0$ and so by the inverse function theorem we can change variable from $k$ to $\tau$ to rewrite \eqref{c1} as
		\begin{equation}\label{c1new}
			y(x,t)=\frac{1}{2\pi}\int_{\mathbb{R}}e^{i(i\omega)^{-1}(\tau)x+i\tau t} \, \widehat{y}_0((i\omega)^{-1}(\tau))\;\frac{d\tau}{i\omega'((i\omega)^{-1}(\tau))}.
		\end{equation}
		In addition, we have $i\omega(k)=\mathcal{O}(k^3)$ and $\frac{1}{i\omega'(k)}=\mathcal{O}(k^{-2})$ as $|k|\rightarrow \infty$. Using the Fourier transform characterization of the Sobolev norm, for each $x\in \mathbb{R}$ we find 
\begin{equation*}
\begin{aligned}
\left\|y(x,\cdot)\right\|_{H_t^{\frac{s+1}{3}}(\mathbb{R})}^2
&= 
\int_{\mathbb{R}}(1+\tau^2)^{\frac{s+1}{3}}|\widehat{y}_0((i\omega)^{-1}(\tau))|^2\;\frac{d\tau}{|i\omega'((i\omega)^{-1}(\tau))|^2}
\\
&\lesssim \int_{\mathbb{R}}\left(1+k^2\right)^{s}|\widehat{y}_0(k)|^2 dk = \left\| y_0 \right\|_{H_x^s(\mathbb R)}^2
\end{aligned}
\end{equation*}
which amounts to estimate \eqref{cauchyextra3*}.  

Next, consider the case $\alpha^2+3\beta\delta\ge 0$.  Let $\theta\in C_c^\infty(\mathbb{R})$ be a function whose additional properties will be specified below. Then, we can write $y=y_1+y_2$, where
\begin{equation}
\begin{aligned}
y_1(x,t)&:=\frac{1}{2\pi}\int_{\mathbb{R}}e^{ikx-\omega(k)t}\theta(k)\widehat{y}_0(k) dk,
\\
y_2(x,t)&:=\frac{1}{2\pi}\int_{\mathbb{R}}e^{ikx-\omega(k)t}(1-\theta(k))\widehat{y}_0(k) dk.
\end{aligned}
\end{equation}
		Taking $j$-th order time derivative of $y_1$ and using Cauchy-Schwarz inequality, we deduce  
\begin{align*}
|\partial_t^j y_1(x,t)|&\leq\frac{1}{2\pi}\int_{\supp(\theta)}|\omega(k)|^{j}|\theta(k)\|\widehat{y}_0(k)|dk
\\
&\lesssim\bigg( \int_{\supp(\theta)}\left(1+k^2\right)^{-s}|\omega(k)|^{2j}dk \bigg)^{\frac 12} \left\|y_0\right\|_{H_x^s(\mathbb{R})}
=
c(s,j,\theta)\left\|y_0\right\|_{H_x^s(\mathbb{R})}.
\end{align*}
We note that this inequality holds for any $s\in\mathbb R$.
Thus, by the physical space characterization of the Sobolev norm, namely
\begin{equation}
\left\| f \right\|_{H_t^{\mu}(-T,T)} = \sum_{j=0}^\mu \big\| \partial_t^j f \big\|_{L_t^{2}(-T,T)}, \quad \mu\in\mathbb N_0,
\end{equation}
 we obtain
\begin{equation}\label{cauchyextra10}
\left\| y_1(x,\cdot) \right\|_{H_t^\mu(-T,T)}\leq c(s,\mu,\theta) T^{\frac12}\left\|y_0\right\|_{H_x^s(\mathbb{R})}
\end{equation}
for any $\mu\in\mathbb N_0$ and any $x, s\in\mathbb R$. 
Then, since given any $m \in \mathbb R$ we can always find $\mu \in \mathbb N \cup\{0\}$ such that $m\leq \mu$,  estimate \eqref{cauchyextra10} readily implies
\begin{equation}\label{cauchyextra1}
\left\| y_1(x,\cdot) \right\|_{H_t^m(-T,T)}\leq c(s,m,\theta) T^{\frac12}\left\|y_0\right\|_{H_x^s(\mathbb{R})}, \quad m,s,x \in \mathbb R.
\end{equation}

In order to handle $y_2$, we note that given $\alpha, \delta\in\mathbb{R}$ and $\beta>0$ satisfying $\alpha^2+3\beta\delta\ge 0$ one can find $k_j=k_j(\alpha,\delta,\beta)\in \mathbb{R}$, $j=1,2$, such that
(i) the roots $\frac{\alpha \pm \sqrt{\alpha^2+3\beta\delta}}{3\beta}$ of $\omega'(k)=0$ lie in $(k_1, k_2)$ and (ii) the mapping $\tau=i\omega(k)$ is monotone increasing on $\mathbb{R}\setminus (k_1, k_2)$.
Now, let $k_3<k_1$ and $k_4>k_2$ be any two numbers and fix $\theta$ so that it further satisfies the condition
$$
\theta(k)=\left\{
\begin{array}{ll}
1, & k\in  [k_1,k_2], 
\\
0, & k\notin (k_3,k_4),
\end{array}
\right.
$$
as well as the condition $0\le |\theta(k)|\le 1$, $k\in \mathbb{R}.$ 
Now, we can rewrite $y_2$ as 
\begin{align*}
			y_2(x,t)&=\frac{1}{2\pi}\int_{\mathbb R \setminus [k_1, k_2]} e^{ikx-\omega(k)t}(1-\theta(k))\widehat{y}_0(k) dk
			\\
			&=\frac{1}{2\pi}\int_{(i\omega)(\mathbb R \setminus [k_1, k_2])}e^{i(i\omega)^{-1}(\tau)x+i\tau t}(1-\theta((i\omega)^{-1}(\tau)))\widehat{y}_0((i\omega)^{-1}(\tau))\;\frac{d\tau}{i\omega'((i\omega)^{-1}(\tau))}.
\end{align*}
Using the definition of the Sobolev norm, for each $x\in \mathbb{R}$ we have %
\begin{align}
\left\|y_2(x,\cdot)\right\|_{H_t^{\frac{s+1}{3}}(\mathbb{R})}^2
&=
\int_{\mathbb{R}}(1+\tau^2)^{\frac{s+1}{3}} |\widehat{y}_2(x,\tau)|^2 d\tau
\nonumber\\
&=\displaystyle\int_{(i\omega)(\mathbb R \setminus [k_1, k_2])}(1+\tau^2)^{\frac{s+1}{3}}\frac{|1-\theta((i\omega)^{-1}(\tau))|^2|\widehat{y}_0((i\omega)^{-1}(\tau))|^2}{|i\omega'((i\omega)^{-1}(\tau))|^2} d\tau
\nonumber\\
&\lesssim \int_{\mathbb R \setminus [k_1, k_2]}\frac{(1+(i\omega(k))^2)^{\frac{s+1}{3}}}{|i\omega'(k)|} |\widehat{y}_0(k)|^2 dk
\nonumber\\
&\lesssim \int_{\mathbb R}\left(1+k^2\right)^{s}|\widehat{y}_0(k)|^2 dk
=
\left\| y_0 \right\|_{H_x^s(\mathbb R)}^2,
\label{cauchyextra2}
\end{align}
where the last inequality follows  from the fact that $i\omega(k)=\mathcal{O}(k^3)$ and $\frac{1}{i\omega'(k)}=\mathcal{O}(k^{-2})$ as $|k|\rightarrow \infty$. Hence, \eqref{cauchyextra3} follows from \eqref{cauchyextra1} and \eqref{cauchyextra2}. Continuity in $x$ once again follows from the dominated convergence theorem.
\end{proof}

Notice that the conservation law \eqref{cuachyspaceest} allows us to control the $L^\infty_t((0, T); H_x^s(\mathbb{R}))$ norm of the solution to the homogeneous linear  Cauchy problem  by the $H^s_x(\mathbb{R})$ norm of the initial data. As we shall show below, this is also the case for the mixed Lebesgue norms $L^\mu_t((0, T); H^{s,r}_x(\mathbb{R}))$, where $H^{s, r}(\mathbb{R})$ is the usual Bessel potential space defined with norm 
\begin{equation}\label{bessel-def}
\left\|f\right\|_{H^{s,r}(\mathbb{R})}
:=
\left\| \mathcal{F}^{-1}\left\{ \left(1+k^2\right)^{\frac{s}{2}} \mathcal F\{f\}(k)\right\}\right\|_{L^{r}(\mathbb{R})}
\end{equation}
and $(\mu,r)$ is any higher-order Schrödinger admissible pair, i.e. any pair $(\mu,r)$ satisfying
	\begin{equation}\label{admissiblepair}
		\mu,r\geq 2, \quad \frac{3}{\mu}+\frac{1}{r}=\frac12.
	\end{equation}
More precisely, we have the following Strichartz estimate:
	\begin{thm}\label{homStr}
Let $s\in\mathbb{R}$ and suppose $(\mu,r)$ is higher-order Schrödinger admissible in the sense of \eqref{admissiblepair}. Then, the solution of the homogenerous linear Cauchy problem \eqref{cauchy} satisfies the Strichartz estimate
		\begin{equation}\label{stry}
			\left\|y\right\|_{L_{t}^{\mu}((0, T); H_{x}^{s,r}(\mathbb{R}))}\lesssim \left\|y_0\right\|_{H_{x}^{s}(\mathbb{R})}.
		\end{equation}
	\end{thm}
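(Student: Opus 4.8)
The plan is to derive \eqref{stry} from the classical machinery for Strichartz estimates, namely the $TT^*$ argument combined with a dispersive decay estimate for the free propagator $S[\cdot\,;0]$, followed by the Christ--Kiselev lemma and an interpolation/duality argument to reach all admissible pairs. First I would reduce to the case $s=0$: since the operator $S[\cdot\,;0]$ commutes with the Bessel multiplier $(1+k^2)^{s/2}$, applying the $s=0$ estimate to $\mathcal F^{-1}\{(1+k^2)^{s/2}\widehat{y}_0\}$ immediately yields the general-$s$ statement with the $H^{s,r}_x$ and $H^s_x$ norms in place. So it suffices to prove $\|S[y_0;0]\|_{L^\mu_t(\Real;L^r_x(\Real))}\lesssim \|y_0\|_{L^2_x(\Real)}$ for admissible $(\mu,r)$, and then restrict the time interval to $(0,T)$.

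The core ingredient is the pointwise-in-time dispersive estimate. Writing the kernel of $S[\cdot\,;0]$ at time $t$ as an oscillatory integral with phase $kx - \omega(k)t = i^{-1}\big(kx+(\beta k^3-\alpha k^2 -\delta k)t\big)$, the relevant phase in $k$ has a nondegenerate piece coming from the leading term $\beta k^3 t$: its second derivative in $k$ is $6\beta k t$, which vanishes only at $k=0$, while the third derivative $6\beta t$ never vanishes. Hence van der Corput's lemma (the third-derivative version, which is uniform and does not require monotonicity assumptions on lower-order terms) gives the decay
\begin{equation*}
\left\| S[y_0;0](\cdot,t) \right\|_{L^\infty_x(\Real)} \lesssim |t|^{-1/3} \left\| y_0 \right\|_{L^1_x(\Real)},
\end{equation*}
uniformly in $x$, for all $t\neq 0$; the constant depends only on $\beta$ (and is in particular independent of $\alpha,\delta$). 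Together with the trivial $L^2\to L^2$ bound from \eqref{cuachyspaceest}, interpolation gives $\|S[y_0;0](\cdot,t)\|_{L^r_x}\lesssim |t|^{-\frac13(1-\frac2r)}\|y_0\|_{L^{r'}_x}$ for $2\le r\le\infty$. Feeding this into the standard $TT^*$ / Hardy--Littlewood--Sobolev argument produces $\|S[y_0;0]\|_{L^\mu_t L^r_x}\lesssim\|y_0\|_{L^2_x}$ precisely when $\frac3\mu=\frac32(1-\frac2r)$, i.e. $\frac3\mu+\frac1r=\frac12$, which is the admissibility condition \eqref{admissiblepair}; the endpoint-free range $\mu>2$ (equivalently $r<\infty$) is exactly where HLS applies without the Keel--Tao refinement, and the boundary pair $(\mu,r)=(\infty,2)$ is already covered by \eqref{cuachyspaceest}. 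Restricting to the finite interval $(0,T)$ only shrinks the norm, giving \eqref{stry}.

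I expect the main obstacle to be establishing the dispersive estimate with a constant that is genuinely uniform over all $x\in\Real$ and independent of the lower-order coefficients $\alpha,\delta$ — this is where the multi-term structure of the operator bites. The naive change of variables $\tau = i\omega(k)$ used in the proof of Theorem~\ref{cauchylemma} breaks down precisely when $\alpha^2+3\beta\delta>0$, because $\omega'$ then has real zeros and the map is not monotone; so one cannot simply parametrise the kernel by $\tau$ and integrate by parts. Instead the right tool is van der Corput applied directly in $k$ with the third-derivative bound $|\partial_k^3(kx-\omega(k)t)| = 6\beta|t|$, which sidesteps any stationary-phase degeneracy of the lower-order terms; one should also split the integral into a region $|k|\lesssim 1$ and a region $|k|\gtrsim1$ if needed to control the absence of smoothness/decay, though for the pure $L^1\to L^\infty$ bound the uniform third-derivative estimate already suffices and no such splitting is required. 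The remaining steps ($TT^*$, Christ--Kiselev for the inhomogeneous-in-time truncation if one wants $(0,T)$ rather than $\Real$, and the commuting-multiplier reduction in $s$) are routine and I would only sketch them.
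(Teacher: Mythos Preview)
Your proposal is correct and follows essentially the same route as the paper: reduce to $s=0$ via the commuting Bessel multiplier, establish the $|t|^{-1/3}$ dispersive bound (the paper cites this from Lemma~4.2 of \cite{Car2003}, obtained exactly via the third-derivative van der Corput argument you describe), and then run the standard $TT^*$/Hardy--Littlewood--Sobolev machinery (the paper defers to Theorem~4.1 of \cite{Car2003} and \cite{Fam22}). Your mention of the Christ--Kiselev lemma is superfluous here --- restricting the homogeneous estimate from $\Real$ to $(0,T)$ is immediate since it only shrinks the norm.
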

	\begin{proof}By the definition \eqref{bessel-def} of the $H^{s,r}$-norm, we have
		\begin{equation*}
			\left\|y\right\|_{L_{t}^{\mu}(\mathbb{R};H_{x}^{s,r}(\mathbb{R}))}=\left\|  \mathcal{F}^{-1}\left\{\left(1+k^2\right)^{\frac{s}{2}}\widehat{y}^{(x)}(k,\cdot)\right\} \right\|_{L_{t}^{\mu}((0, T); L_{x}^{r}(\mathbb{R}))}.
		\end{equation*} 
Recalling that $\widehat{y}^{(x)}(k,t)=e^{-\omega(k)t} \, \widehat{y}_0(k)$, we have
		\begin{equation*}
			\mathcal{F}^{-1}\left\{\left(1+k^2\right)^{\frac{s}{2}}\widehat{y}^{(x)}(k,t) \right\} =\frac{1}{2\pi}\int_{-\infty}^\infty e^{ikx-\omega(k)t} \left(1+k^2\right)^{\frac{s}{2}}\widehat{y}_0(k)dk=S[\varphi; 0](t),
		\end{equation*} where $\varphi(x) = \mathcal F^{-1}\left\{\left(1+k^2\right)^{\frac{s}{2}}\widehat{y}_0(k)\right\}(x)$.  So, it suffices to prove that
		\begin{equation}\label{c2}
			\|S[\varphi; 0]\|_{L_{t}^{\mu}((0, T); L_{x}^{r}(\mathbb{R}))}\lesssim \left\| \varphi \right\|_{L_x^2(\mathbb{R})}.
		\end{equation}
For this, we note that by the definition of the Fourier transform we can write
		\begin{equation*}
			S[\varphi; 0](x,t)=\frac{1}{2\pi}\int_{\mathbb{R}} I(x,y,t)\varphi(y)dy,
\end{equation*}  
where 
$$
I(x,y,t):=\int_{\mathbb{R}} e^{ik(x-y-t\delta)+i(t\beta k^3-t\alpha k^2)}dk.
$$
Then, invoking the following dispersive estimate from the proof of Lemma 4.2 in \cite{Car2003},
 $$
 |I(x,y,t)|\lesssim |\beta t|^{-\frac{1}{3}}, \quad t\neq 0,
 $$
where the inequality constant is independent of $x, y, t$, and proceeding along the lines of  the proof of Theorem 4.1 in \cite{Car2003} (see also relevant proof in \cite{Fam22}), we infer the desired estimate \eqref{c2}.
	\end{proof}

\subsection{\ttfamily\bfseries Nonhomogeneous linear Cauchy problem}\label{SecNonHomCauchy} 
We continue our linear analysis with the problem
\begin{equation}\label{nonhomogeneous1}
\begin{aligned}
&iz_t+i\beta z_{xxx}+\alpha z_{xx}+i\delta z_x = F,  \quad (x,t)\in\mathbb{R} \times (0,T),
\\
&z(x,0) = 0, \quad x\in \mathbb{R},
\end{aligned}
\end{equation}
where $F=E_0f\in L_t^2((0,T); H_x^s(\mathbb{R}))$ is a spatial extension of $f\in L_t^2((0,T); H_x^s(\mathbb{R}_+))$.
Thanks to Duhamel's principle, the solution of the nonhomogeneous problem \eqref{nonhomogeneous1}, denoted by $S[0;F]$, can be expressed as
\begin{equation}\label{duhamel}
\begin{aligned}
z(x,t)=S[0;F](x,t)&=-i\int_{0}^{t} S[F(\cdot,t');0](x,t-t')dt'
\\
&=-\frac{i}{2\pi}\int_{0}^{t}\int_{\mathbb{R}} e^{ikx-\omega(k)(t-t')}\widehat{F}(k,t') dkdt',
\end{aligned}
\end{equation}
	where, for each $t'\in[0, t]$, $S[F(\cdot,t');0]$ denotes the solution to the homogeneous problem \eqref{cauchy} with initial data $F(x, t')$.
%
We then have the following result, whose proof is based on the approach that was used for the Korteweg-de Vries equation in  \cite{hmy2019-kdv}.  
\begin{thm}\label{Nonhomthm} The unique solution of \eqref{nonhomogeneous1} satisfies the space estimate
		\begin{equation}\label{nonhthm1}
			\sup_{t\in[0,T]}\left\| z(\cdot,t) \right\|_{H_x^{s}(\mathbb{R})}\leq \left\|F\right\|_{L_t^1((0,T); H_x^{s}(\mathbb{R}))}, \quad s\in \mathbb{R}.
\end{equation}
Moreover, if  $-1\leq s \leq 2$ with $s\neq \frac 12$ then
		the following time estimate holds
		\begin{equation}\label{nonhthm2}
			\sup_{x\in\mathbb{R}}\left\| z(x,\cdot) \right\|_{H_t^{\frac{s+1}{3}}(0,T)}\lesssim \max\{T^{\frac12}(1+T^{\frac12}), T^{\sigma}\} \|F\|_{L_t^2((0,T); H_x^{s}(\mathbb{R}))},
		\end{equation}
where
		\begin{equation}\label{stildedef}
			\sigma=\left\{
			\def\arraystretch{1.5}
			\begin{array}{ll}
				\frac{1-2s}{6}, &-1\leq s<\frac12, 
				\\
				\frac{2-s}{3}, &\frac12< s< 2,
				\\
				\frac 12, &s=2.
			\end{array}
			\right.
		\end{equation}
	\end{thm}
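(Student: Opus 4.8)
\emph{Proof strategy.} The space estimate \eqref{nonhthm1} follows at once: applying Minkowski's integral inequality to the Duhamel representation \eqref{duhamel} and then the conservation law \eqref{cuachyspaceest} of Theorem \ref{cauchylemma} for each fixed $t'$ gives, for every $s\in\mathbb R$ and $t\in[0,T]$,
\[
\|z(\cdot,t)\|_{H^s_x(\mathbb R)}\le\int_0^t\big\|S[F(\cdot,t');0](\cdot,t-t')\big\|_{H^s_x(\mathbb R)}\,dt'=\int_0^t\|F(\cdot,t')\|_{H^s_x(\mathbb R)}\,dt'\le\|F\|_{L^1_t((0,T);H^s_x(\mathbb R))}.
\]

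For the temporal estimate \eqref{nonhthm2} the plan is to follow the KdV argument of \cite{hmy2019-kdv}, adapted to the multi-term symbol. After extending $F$ by zero in time and Fourier transforming in $x$, \eqref{duhamel} reads $\widehat z(k,t)=-i e^{ip(k)t}\int_0^t e^{-ip(k)t'}\widehat F(k,t')\,dt'$ on $t\in[0,T]$, where $p(k):=i\omega(k)=\beta k^3-\alpha k^2-\delta k$ is real for $k\in\mathbb R$. Writing $\int_0^t=\int_0^T-\int_t^T$ splits $z=z_1+z_2$. Here $z_1=-iS[\phi;0]$ with $\widehat\phi(k):=\int_0^T e^{-ip(k)t'}\widehat F(k,t')\,dt'$, so $|\widehat\phi(k)|\le\|\widehat F(k,\cdot)\|_{L^1_t(0,T)}\le T^{1/2}\|\widehat F(k,\cdot)\|_{L^2_t(0,T)}$ and hence $\|\phi\|_{H^s_x(\mathbb R)}\le T^{1/2}\|F\|_{L^2_t((0,T);H^s_x(\mathbb R))}$; Theorem \ref{cauchylemma} applied to $z_1$ then produces exactly the $T^{1/2}(1+T^{1/2})$ contribution to \eqref{nonhthm2} (only $T^{1/2}\|F\|$ when $\alpha^2+3\beta\delta<0$), with no constraint on $s$.

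The substance of the proof lies in the ``backward Duhamel'' term $z_2$, $\widehat z_2(k,t)=i\int_t^T e^{ip(k)(t-t')}\widehat F(k,t')\,dt'$, which I would analyze by passing to the space-time Fourier transform $\mathcal F F(k,\tau)$ of $F$, obtaining a representation with a kernel that carries both the dispersive phase $e^{ip(k)t}$ and the genuine time frequency $e^{it\tau}$, together with the weight $1/(\tau-p(k))$. A smooth cutoff $\chi$ in the resonant variable $\tau-p(k)$, and --- the new feature relative to KdV --- a cutoff $\theta\in C_c^\infty(\mathbb R)$ around the critical points $\frac{\alpha\pm\sqrt{\alpha^2+3\beta\delta}}{3\beta}$ of $p$ (exactly as in the proof of Theorem \ref{cauchylemma}), off whose support $\tau=p(k)$ is a monotone change of variables with Jacobian $1/|p'(k)|\sim(1+k^2)^{-1}$, organize the analysis. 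In the near-resonant region $|\tau-p(k)|\lesssim1$ the cancellation $|e^{i(\tau-p(k))(T-t)}-1|\le T\,|\tau-p(k)|$ makes the kernel $O(T)$, so that piece is strongly local in time and contributes a power of $T$ well above the one claimed. In the off-resonant region $|\tau-p(k)|\gtrsim1$, changing variables $\tau=p(k)+\tau''$ ($|\tau''|\gtrsim1$) recasts the contribution as a superposition over $\tau''$ of free evolutions $S[\phi_{\tau''};0]$ and their modulations $e^{it\tau''}S[\phi_{\tau''};0]$, where $\widehat{\phi_{\tau''}}(k)=\mathcal F F(k,p(k)+\tau'')$ satisfies both $\|\phi_{\tau''}\|_{H^s_x}\le T^{1/2}\|F\|_{L^2_t((0,T);H^s_x)}$ uniformly in $\tau''$ and $\tau''\mapsto\|\phi_{\tau''}\|_{H^s_x}\in L^2_{\tau''}(\mathbb R)$ with norm $\|F\|_{L^2_t((0,T);H^s_x)}$; applying Theorem \ref{cauchylemma} to each $S[\phi_{\tau''};0]$ and paying $\langle\tau''\rangle^{(s+1)/3}$ for the modulation leaves the integral $\int_{|\tau''|\gtrsim1}\langle\tau''\rangle^{(s+1)/3}|\tau''|^{-1}\|\phi_{\tau''}\|_{H^s_x}\,d\tau''$, which I would split at a frequency $R$ (using the uniform $T^{1/2}$ bound for $|\tau''|<R$ and Cauchy--Schwarz with the $L^2_{\tau''}$ bound for $|\tau''|>R$) and optimize: with $R\sim T^{-1}$ this gives $T^{1/2-(s+1)/3}=T^{(1-2s)/6}$, valid precisely when $(s+1)/3<\tfrac12$, i.e. $-1\le s<\tfrac12$. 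When $\tfrac12<s<2$, i.e. $(s+1)/3\in(\tfrac12,1)$, the tail integral in the Cauchy--Schwarz step no longer converges and a refined treatment of the $\tau''$-integral is needed, yielding $\sigma=\frac{2-s}{3}$; at $s=2$, where $(s+1)/3=1$, one differentiates once in $t$ directly and gets $\sigma=\tfrac12$; the excluded value $s=\tfrac12$ is where $(s+1)/3=\tfrac12$ is borderline. The $e^{it\tau}$ part of the off-resonant kernel is a time-Fourier multiplier with bounded, square-integrable symbol $(1-\chi)/(\tau-p(k))$ and is handled by Hausdorff--Young together with Hölder in $t$ (using that $\widehat F(k,\cdot)$ is supported in $[0,T]$), producing a bound $\lesssim T^{1/2-\eps}\|F\|_{L^2_t((0,T);H^s_x)}$ for every $\eps>0$, dominated by $\max\{T^{1/2}(1+T^{1/2}),T^\sigma\}$ for $s$ in the stated ranges.

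The step I expect to be the main obstacle is the off-resonant part: recasting the forced term as a superposition of (modulated) free evolutions and then carrying out the frequency-cutoff optimization with the \emph{sharp} exponent $\sigma$ in each regime, all while keeping track of the $\theta$-localization forced by the non-monotonicity of $p$ (absent in the single-term KdV case) and handling the three-fold split of $\sigma$ caused by the fractional temporal exponent $(s+1)/3$ crossing $\tfrac12$ and $1$.
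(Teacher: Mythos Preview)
Your space estimate and the handling of $z_1$ are correct and match the paper. For the rest of the time estimate, however, your route is genuinely different from the paper's, and it contains a real gap.

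The paper does not pass to the space-time Fourier transform or carry out any resonance decomposition. It works entirely in physical time via the Gagliardo seminorm
\[
\|z(x,\cdot)\|_m^2=2\int_0^T\int_0^{T-t}\frac{|z(x,t+l)-z(x,t)|^2}{l^{1+2m}}\,dl\,dt,\qquad m=\tfrac{s+1}{3},
\]
applied directly to the Duhamel formula. The increment splits as a long piece over $[0,t]$ (handled by Minkowski and Theorem~\ref{cauchylemma}) and a short piece $J$ involving $\int_t^{t+l}S[F(\cdot,t');0](x,t+l-t')\,dt'$. The trichotomy for $\sigma$ arises from three elementary estimates of $J$: for $0<m<\tfrac12$ one uses Cauchy--Schwarz in $t'$ and $\int_0^T l^{-2m}\,dl<\infty$, giving $\sigma=\tfrac{1-2m}{2}=\tfrac{1-2s}{6}$; for $\tfrac12<m<1$ the Sobolev embedding $H^s_x\hookrightarrow L^\infty_x$ (available exactly when $s>\tfrac12$) followed by Cauchy--Schwarz and Fubini gives $\sigma=1-m=\tfrac{2-s}{3}$; for $m=1$ one differentiates in $t$ as you indicate. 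No frequency cutoffs, no scale optimization, no interpolation. (Incidentally, this \emph{is} the method of \cite{hmy2019-kdv}; the Fourier-side scheme you describe is closer in spirit to \cite{ck2002,h2006}, which the paper explicitly sets aside as more involved for multi-term symbols.)

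Your optimization at $R\sim T^{-1}$ legitimately recovers $\sigma=\tfrac{1-2s}{6}$ when $s<\tfrac12$. The gap is the range $\tfrac12<s<2$: you write only that ``a refined treatment of the $\tau''$-integral is needed, yielding $\sigma=\tfrac{2-s}{3}$,'' but give no mechanism to replace the divergent Cauchy--Schwarz tail $\int_{|\tau''|>R}\langle\tau''\rangle^{2m-2}\,d\tau''$ and no reason why this specific exponent should emerge from your framework. In the paper that exponent falls out immediately from the Sobolev embedding in $x$, a tool with no evident counterpart in your purely Fourier-side decomposition; without supplying that missing ingredient, the proposal does not establish \eqref{nonhthm2} over the full stated range.
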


\begin{rem}
For $2<s<\frac 72$, due to the fractional norm $\left\|\partial_{t}z(x,\cdot)\right\|_{m-1}$ (see definition \eqref{mnorm} below) the analogue of the time estimate \eqref{nonhthm2} turns out to be
$$
\sup_{x\in\mathbb{R}}\left\| z(x,\cdot) \right\|_{H_t^{\frac{s+1}{3}}(0,T)}\lesssim \max\{T^{\frac12}(1+T^{\frac12}), T^{\sigma}\} \|F\|_{L_t^2((0,T); H_x^{s}(\mathbb{R}))}
+
\sup_{x\in\mathbb R} \|F(x, \cdot)\|_{H_t^{\frac{s+1}{3}-1}(0, T)}.
$$
The appearance of the space $C(\mathbb R_x; H^{\frac{s+1}{3}-1}(0, T))$ via the relevant norm on the right-hand side has a direct impact on the analysis of the nonlinear problem, as it eventually requires one to establish an appropriate multilinear estimate for the term $\| |u|^p u(x, \cdot) \|_{H_t^{\frac{s+1}{3}-1}(0, T)}$ (note that the underlying range of $s$ implies $0<\frac{s+1}{3}-1<\frac 12$ and so the algebra property is not available). For this reason, a different approach (perhaps via energy estimates) might be preferable for showing well-posedness in this higher range of $s$. In any case, this task lies outside the scope of the present work, which instead focuses on solutions of lower smoothness and, in particular, towards the low regularity setting $0\leq s<\frac 12$.
\end{rem}

	\begin{proof}  
	In view of the Duhamel representation \eqref{duhamel}, the space estimate \eqref{nonhthm1} readily follows from the homogeneous conservation law \eqref{cuachyspaceest}. 
	
We proceed to the time estimate \eqref{nonhthm2}. Restricting $s\ge -1$ allows us to employ the physical space characterization of the Sobolev norm since then the exponent $\frac{s+1}{3}$ is non-negative. In particular, for $-1 \leq s < 2$, setting $m := \frac{s+1}{3}$ and observing that $0 \leq m < 1$, we have
\begin{equation}\label{z-norm-1}
			\left\| z(x,\cdot) \right\|_{H_t^{m}(0,T)} = \left\| z(x,\cdot) \right\|_{L_t^{2}(0,T)} + \left\| z(x,\cdot) \right\|_{m},
\end{equation}
		where the fractional part of the Sobolev norm is zero for $m=0$ and for $0<m<1$  is given by
		\begin{equation}\label{mnorm}
			\left\| z(x,\cdot) \right\|_{m}^{2}=2\int_{0}^{T}\int_{0}^{T-t}\frac{|z(x,t+l)-z(x,t)|^2}{l^{1+2m}}dldt.
		\end{equation}
For each $x\in\mathbb{R}$, employing Minkowski's integral inequality and subsequently using the homogeneous time estimates  \eqref{cauchyextra3} and \eqref{cauchyextra3*} for $\alpha^2+3\beta\delta\ge 0$ and $\alpha^2+3\beta\delta< 0$ respectively, along with the Cauchy-Schwarz inequality, we obtain
\begin{equation}\label{nonhpf7}
\begin{aligned}
			\left\| z(x,\cdot) \right\|_{L_t^2(0,T)}
&\leq \int_{0}^{T}\left\| S[F(\cdot,t');0](x,\cdot-t') \right\|_{L_t^2(0,T)}dt'
\\
&\lesssim (1+T^{\frac 12})\int_{0}^{T}\left\|F(\cdot,t')\right\|_{H_{x}^{-1}(\mathbb{R})}dt'
\\
			&\lesssim T^{\frac 12} (1+T^{\frac 12}) \, \|F\|_{L_t^2((0,T); H_x^s{(\mathbb{R})})}.
\end{aligned}
\end{equation}

For the fractional norm,  noting that
		\begin{align}\nonumber
			\left|z(x,t+l)-z(x,t)\right|^2
			&\leq  \bigg| \int_{0}^{t}S[F(\cdot,t');0](x,t+l-t')-S[F(\cdot,t');0](x,t-t')dt' \bigg|^2\\\nonumber
			&\quad
			+\bigg| \int_{t}^{t+l}S[F(\cdot,t');0](x,t+l-t')dt'\bigg|^2
		\end{align}
we have $\left\| z(x,\cdot) \right\|_{m}^{2} \lesssim I+J$, where 
\begin{align}
&I := \int_{0}^{T}\int_{0}^{T-t} \frac{1}{l^{1+2m}} \bigg(\int_{0}^{T} \Big|S[F(\cdot,t');0](x,t+l-t')-S[F(\cdot,t');0](x,t-t')\Big|dt' \bigg)^2dldt,
\\
&J := \int_{0}^{T}\int_{0}^{T-t}\frac{1}{l^{1+2m}}
\bigg| \int_{t}^{t+l}S[F(\cdot,t');0](x,t+l-t')dt'\bigg|^2 dldt.
\end{align}

For $I$, we proceed as follows.  First, we multiply the integrand by the characteristic function $\chi_{[0, T-t]}(l)$ so that $\chi_{[0, T-t]}(l)=1$ for $0\le l\le T-t$ and $\chi_{[0, T-t]}(l)=0$ otherwise. This allows us to  replace $T-t$ by $T$ in the upper limit of the  integral with respect to $l$. Then, we use Minkowski's inequality for the triple integral and, finally, we use the definition of $\chi_{[0, T-t]}(l)$ once again to switch $T$ by $T-t$ in the limit of the integral taken with respect to $l$. Performing these steps and employing the homogeneous time estimates \eqref{cauchyextra3} and \eqref{cauchyextra3*}, we find
\begin{align}\label{nonhpf4}
			I&\leq\bigg(\int_{0}^{T}\bigg(\int_{0}^{T} \int_{0}^{T-t}\frac{1}{l^{1+2m}}\Big|S[F(\cdot,t');0](x,t+l-t')-S[F(\cdot,t');0](x,t-t')\Big|^2dldt \bigg)^{\frac12}dt'\bigg)^2
\\
&\simeq\bigg(\int_{0}^{T}\left\|S[F(\cdot,t');0](x,\cdot-t')\right\|_{m}\;dt'\bigg)^2
\lesssim\bigg(\int_{0}^{T}\left\|F(\cdot,t')\right\|_{H_{x}^{s}(\mathbb{R})}\;dt'\bigg)^2
\lesssim T\int_{0}^{T}\|F(\cdot,t)\|_{H_{x}^{s}(\mathbb{R})}^2 dt.
\nonumber
\end{align}
%

In order to estimate $J$, we consider the cases $0<m<\frac 12$ and $\frac 12<m<1$ separately. 
The range $\frac 12<m<1$ corresponds to $\frac 12<s<2$ and hence we can employ  the Sobolev embedding theorem in $x$. In particular, substituting for $S[F(\cdot,t');0](x,t+l-t')$ via \eqref{c1} and then using the Sobolev embedding, the Fourier transform characterization of the Sobolev norm, and the fact that $\omega(k)$ is imaginary for $k\in\mathbb R$, we have
\begin{align*}
J&\leq\int_{0}^{T}\int_{0}^{T-t}\frac{1}{l^{1+2m}}\left\| \frac{1}{2\pi}\int_{\mathbb{R}}e^{ikx-\omega(k)(t+l)} \int_{t}^{t+l}e^{\omega(k)t'}\widehat{F}(k,t')dt' dk \right\|_{L_{x}^{\infty}(\mathbb R)}^2dldt
\nonumber\\
&\lesssim\int_{0}^{T}\int_{0}^{T-t}\frac{1}{l^{1+2m}}\left\| \frac{1}{2\pi}\int_{\mathbb{R}}e^{ikx-\omega(k)(t+l)} \int_{t}^{t+l}e^{\omega(k)t'}\widehat{F}(k,t')dt' dk \right\|_{H_{x}^{s}(\mathbb R)}^2dldt
\nonumber\\
&=\int_{0}^{T}\int_{0}^{T-t}\frac{1}{l^{1+2m}}\int_{\mathbb{R}}\left(1+k^2\right)^{s}\bigg| \int_{t}^{t+l}e^{\omega(k)t'}\widehat{F}(k,t')dt' \bigg|^2dkdldt.
\end{align*}
Thus, by Minkowski's integral inequality between the integrals with respect to $t'$ and $k$, Cauchy-Schwarz inequality in the $t'$-integral, and Fubini's theorem between the integrals with respect to $t$ and $t'$, 
\begin{equation}\label{nonhpf5}
\begin{aligned}
J &\lesssim \int_{0}^{T}\int_{0}^{T-t}\frac{1}{l^{1+2m}}\bigg( \int_{t}^{t+l}\left\|F(\cdot,t')\right\|_{H_{x}^{s}(\mathbb R)}dt' \bigg)^2dldt
\\
&\leq
\int_{0}^{T}\int_{0}^{T-t}\int_{t}^{t+l} \left\|F(\cdot,t')\right\|_{H_{x}^{s}(\mathbb R)}^2\; l^{-2m}\;dt'dldt
\\
&=
\int_{0}^{T} \left\|F(\cdot,t')\right\|_{H_{x}^{s}(\mathbb R)}^2\int_{0}^{T}l^{-2m}\int_{t'-l}^{t'} dtdldt'
\\
&\simeq\frac{T^{2-2m}}{2-2m}\int_{0}^{T} \left\|F(\cdot,t')\right\|_{H_{x}^{s}(\mathbb R)}^2dt'.
\end{aligned}
\end{equation}
%
The range $0<m<\frac 12$ corresponds to $-1<s<\frac{1}{2}$, hence Sobolev embedding is no longer available. However, the fact that $m<\frac 12$ allows to proceed  via the Cauchy-Schwarz inequality in $t'$ as follows:
\begin{equation}\label{nonhpf6}
\begin{aligned}
			J&\lesssim \int_{0}^{T}\frac{1}{l^{2m}}\int_{0}^{T-l}\int_{t}^{t+l}|S[F(\cdot,t');0](x,t+l-t')|^2dt'dtdl
\\
			&= \int_{0}^{T}\frac{1}{l^{2m}}\int_{l}^{T}\int_{t-l}^{t}|S[F(\cdot,t');0](x,t-t')|^2dt'dtdl
\\
			&\lesssim \bigg(\int_{0}^{T}\frac{1}{l^{2m}}dl\bigg)\int_{0}^{T}\left\|S[F(\cdot,t');0](x,\cdot-t')\right\|_{L_t^2(t',T)}^2dt'
\\
			&\lesssim \frac{T^{1-2m}}{1-2m}\int_{0}^{T}\left\|F(\cdot,t')\right\|_{H_{x}^{-1}(\mathbb R)}^2dt'.
		\end{aligned}
\end{equation}
Note that the equality above is due to the change of variable $t \mapsto t-l$, and the inequality succeeding it follows by extending the range of the  integrals with respect to $t'$ and $t$ and then interchanging the resulting integrals. The final inequality is thanks to Theorem \ref{cauchylemma}.

Estimates \eqref{nonhpf7}, \eqref{nonhpf4}, \eqref{nonhpf5}, \eqref{nonhpf6}   combined with the Sobolev norm definition \eqref{z-norm-1} imply the desired time estimate \eqref{nonhthm2} in the range $-1\leq s < 2$ with $s\neq \frac 12$. 

Finally, we consider $2\leq s<\frac72$. As this range corresponds to $1\leq m < \frac 32$, the Sobolev norm \eqref{z-norm-1} must be modified to
%
		\begin{equation*}
			\left\| z(x,\cdot) \right\|_{H_{t}^{m}(0,T)}^{2}=\left\| z(x,\cdot) \right\|_{H_t^1(0,T)}^{2}+\left\|\partial_{t}z(x,\cdot)\right\|_{m-1}^2.
		\end{equation*}
Differentiating \eqref{duhamel} in $t$, we have
		\begin{equation}\label{dtz}
			\partial_{t}z(x,t)=-iS[F(\cdot,t);0](x,0)-i\int_{0}^{t}\partial_{t}\big[ S[F(\cdot,t');0](x,t-t') \big]dt'.
		\end{equation}
We begin by observing that $S[F(\cdot,t);0](x,0)
=F(x,t)$. 
Moreover, by using the Fourier transform property for derivatives, 
%
we note that
		\begin{equation}\label{fourierderivative}
		\begin{aligned}
			\partial_{t}\big[ S[F(\cdot,t');0](x,t-t') \big]&=\partial_{t}\left[\frac{1}{2\pi}\int_{\mathbb{R}}e^{ikx-\omega(k)(t-t')}\widehat{F}(k,t')dk \right]\\
			&=\frac{1}{2\pi}\int_{\mathbb{R}}[-\omega(k)]e^{ikx-\omega(k)(t-t')}\widehat{F}(k,t')dk\\ 
			&=S\big[ (-\beta\partial_{x}^{3}+i\alpha\partial_{x}^{2}-\delta\partial_{x})F(\cdot,t');0 \big](x,t-t').
			\end{aligned}
		\end{equation}
Therefore, \eqref{dtz} can be rewritten as
		\begin{equation}\label{dtz2}
			\partial_{t}z(x,t)=-iF(x,t)-i\int_{0}^{t}S\big[ (-\beta\partial_{x}^{3}+i\alpha\partial_{x}^{2}-\delta\partial_{x})F(\cdot,t');0 \big](x,t-t')dt'
		\end{equation}
and so
		\begin{equation*}
			\left\|\partial_{t}z(x,\cdot)\right\|_{L_t^2(0,T)} \leq \left\|F(x,\cdot)\right\|_{L_t^2(0,T)}+\left\| \int_{0}^{t}S\big[ (-\beta\partial_{x}^{3}+i\alpha\partial_{x}^{2}-\delta\partial_{x})F(\cdot,t');0 \big](x,t-t')dt'\right\|_{L_t^2(0,T)}.
		\end{equation*}
The first term on the right-hand side can be handled as follows:
\begin{equation}\label{zt-1}
\left\|F(x,\cdot)\right\|_{L_t^2(0,T)}
\leq
\sup_{x\in\mathbb R} \left\|F(x,\cdot)\right\|_{L_t^2(0,T)}
\leq
\left\|F\right\|_{L_t^2((0,T); H_x^{\frac 12+}(\mathbb R))}.
\end{equation}
For the second term, extending the range of integration in $t'$ and then applying Minkowski's integral inequality in combination with Theorem \ref{cauchylemma}, we have 
		\begin{align}\label{zt-2}
			&\left\| \int_{0}^{t}S\big[ (-\beta\partial_{x}^{3}+i\alpha\partial_{x}^{2}-\delta\partial_{x})F(\cdot,t');0 \big](x,t-t')dt'\right\|_{L_t^2(0,T)}
			\nonumber\\
			&\lesssim (1+T^{\frac 12}) \int_{0}^{T}\left\| (-\beta\partial_{x}^{3}+i\alpha\partial_{x}^{2}-\delta\partial_{x})F(\cdot,t') \right\|_{H_{x}^{-1}(\mathbb{R})}dt'
			\nonumber\\
			&\lesssim (1+T^{\frac 12}) \left[\beta\int_{0}^{T}\left\|\partial_{x}^{3}F(\cdot,t') \right\|_{H_{x}^{-1}(\mathbb{R})}dt'+|\alpha|\int_{0}^{T}\left\|\partial_{x}^{2}F(\cdot,t') \right\|_{H_{x}^{-1}(\mathbb{R})}dt'+|\delta|\int_{0}^{T}\left\|\partial_{x}F(\cdot,t') \right\|_{H_{x}^{-1}(\mathbb{R})}dt'\right]
			\nonumber\\
			&\lesssim(1+T^{\frac 12})
			\left[\beta\int_{0}^{T}\left\|F(\cdot,t') \right\|_{H_{x}^{2}(\mathbb{R})}dt'+|\alpha|\int_{0}^{T}\left\|F(\cdot,t') \right\|_{H_{x}^{1}(\mathbb{R})}dt'+|\delta|\int_{0}^{T}\left\|F(\cdot,t') \right\|_{L_{x}^{2}(\mathbb{R})}dt'\right]
			\nonumber\\
			&\lesssim (1+T^{\frac 12}) \int_{0}^{T}\left\|F(\cdot,t') \right\|_{H_{x}^{2}(\mathbb{R})}dt'.
			\end{align}
Together, estimates \eqref{zt-1} and \eqref{zt-2} imply the bound
\begin{equation}
\left\|\partial_{t}z(x,\cdot)\right\|_{L_t^2(0,T)}
\lesssim
T^{\frac 12} (1+T^{\frac 12}) \left\|F\right\|_{L_t^2((0, T); H_{x}^{2}(\mathbb{R}))}
\end{equation}  
which corresponds to the desired estimate \eqref{nonhthm2} in the case $s=2$. 
\end{proof}

Regarding $L^\mu_tL^r_x$ Strichartz-type estimates for the nonhomogeneous linear Cauchy problem \eqref{nonhomogeneous1}, we have the following result which is a consequence of the homogeneous Strichartz estimates given in Theorem \ref{homStr}.
	\begin{thm}\label{NonHomStrThm}
		Let $s\in\mathbb{R}$ and suppose $(\mu,r)$ is higher-order Schrödinger admissible in the sense of \eqref{admissiblepair}. Then, the solution of the nonhomogeneous linear Cauchy problem \eqref{nonhomogeneous1} satisfies the Strichartz estimate
		\begin{equation}
			\left\|z\right\|_{L^\mu_t((0, T); H^{s,r}_x(\mathbb{R}))}\lesssim \|F\|_{L_t^1((0,T); H^s_x(\mathbb{R}))}.
		\end{equation}
	\end{thm}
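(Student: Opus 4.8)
The plan is to deduce the estimate directly from the homogeneous Strichartz estimate of Theorem~\ref{homStr}, combined with the Duhamel representation~\eqref{duhamel} and Minkowski's integral inequality --- that is, via the classical argument used for the Cauchy problem. Since the temporal norm appearing on the data side is $L^1_t$, no Christ--Kiselev type lemma is needed: the truncated Duhamel integral $\int_0^t$ can be controlled directly by its untruncated counterpart.

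First I would recall from~\eqref{duhamel} that $z(x,t)=-i\int_0^t S[F(\cdot,t');0](x,t-t')\,dt'$, insert the characteristic function $\chi_{[t',T]}(t)$ so as to write the $t'$-integral over the fixed interval $(0,T)$, and apply Minkowski's integral inequality to move the $L^\mu_t((0,T);H^{s,r}_x(\mathbb{R}))$ norm inside the $t'$-integral. This gives
\begin{equation*}
\left\|z\right\|_{L^\mu_t((0,T);H^{s,r}_x(\mathbb{R}))}
\leq
\int_0^T \left\| S[F(\cdot,t');0](\cdot,\cdot-t') \right\|_{L^\mu_t((t',T);H^{s,r}_x(\mathbb{R}))}\,dt'.
\end{equation*}
Next, for each fixed $t'\in[0,T]$, the change of variable $t\mapsto t-t'$ in the inner norm turns the interval $(t',T)$ into $(0,T-t')$ and identifies the integrand with the homogeneous evolution $S[F(\cdot,t');0]$ of the initial datum $F(\cdot,t')$, restricted to the time window $(0,T-t')$. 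Because the implicit constant in the homogeneous Strichartz estimate~\eqref{stry} is independent of the length of the time interval --- its proof rests on the dispersive bound $|I(x,y,t)|\lesssim|\beta t|^{-1/3}$ with constant uniform in $t$ --- Theorem~\ref{homStr} yields
\begin{equation*}
\left\| S[F(\cdot,t');0](\cdot,\cdot-t') \right\|_{L^\mu_t((t',T);H^{s,r}_x(\mathbb{R}))}
=
\left\| S[F(\cdot,t');0] \right\|_{L^\mu_t((0,T-t');H^{s,r}_x(\mathbb{R}))}
\lesssim
\left\|F(\cdot,t')\right\|_{H^s_x(\mathbb{R})}.
\end{equation*}
Substituting this into the previous display and recognizing $\int_0^T\|F(\cdot,t')\|_{H^s_x(\mathbb{R})}\,dt'$ as $\|F\|_{L^1_t((0,T);H^s_x(\mathbb{R}))}$ completes the proof.

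The argument is essentially soft; the one point worth a moment's care is the uniformity in $T$ of the constant in Theorem~\ref{homStr}, which is exactly what lets the Minkowski plus change-of-variable step close without any loss of powers of $T$. I therefore expect no genuine obstacle here, the statement being a routine consequence of the homogeneous theory, just as in the Cauchy-problem setting of~\cite{Car2003,Fam22}.
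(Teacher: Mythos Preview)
Your proposal is correct and follows essentially the same approach as the paper: both introduce a characteristic function to write the Duhamel integral over the fixed interval $(0,T)$, apply Minkowski's integral inequality to bring the $L^\mu_t((0,T);H^{s,r}_x(\mathbb{R}))$ norm inside the $t'$-integral, and then invoke the homogeneous Strichartz estimate of Theorem~\ref{homStr} for each fixed $t'$. Your additional remarks on the uniformity in $T$ of the constant in Theorem~\ref{homStr} and on why no Christ--Kiselev lemma is needed are accurate and make explicit what the paper leaves implicit.
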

	
	\begin{proof}Letting $H(x,t,t'):=\chi_{\{t'\le t\}}(t')S[F(\cdot,t');0](x,t-t')$, we  rewrite  $\left\|z\right\|_{L^\mu_t((0, T); H^{s,r}_x(\mathbb{R}))}$ as
		\begin{equation}
			\left\|z\right\|_{L^\mu_t((0, T); H^{s,r}_x(\mathbb{R}))} = \Big\|\int_0^TH(\cdot,\cdot,t')dt'\Big\|_{L^\mu_t((0, T); H^{s,r}_x(\mathbb{R}))}.
		\end{equation} 
		Therefore, in view of the homogeneous Strichartz estimate \eqref{stry}, we readily infer
		\begin{align*}
			\left\|z\right\|_{L^\mu_t((0, T); H^{s,r}_x(\mathbb{R}))} &\leq \int_0^T\left\|H(\cdot,\cdot,t')\right\|_{L^\mu_t((0, T); H^{s,r}_x(\mathbb{R}))}dt'\\
			&\leq \int_0^T\left\|S[F(\cdot,t');0](\cdot,\cdot-t')\right\|_{L^\mu_t((0,T); H^{s,r}_x(\mathbb{R}))}dt'
			\lesssim   \int_0^T\left\|F(\cdot,t')\right\|_{H^{s}_x(\mathbb{R})}dt'.
		\end{align*}
	\end{proof}

	\subsection{\ttfamily\bfseries Reduced initial-boundary value problem}\label{SecRedibvp} We consider the reduced initial-boundary value problem
	\begin{equation}\label{linear4}
\begin{aligned}
&iq_t+i\beta q_{xxx}+\alpha q_{xx}+i\delta q_x = 0,  \quad (x,t)\in\mathbb{R}_{+} \times (0,T'),\\ 
&q(x,0) = 0, \quad x\in \mathbb{R}_{+}, \\
&q(0,t) = g_0(t) := E_b[g-y(0,\cdot)-z(0,\cdot)](t), \quad t\in (0,T'),
\end{aligned}
	\end{equation}
	where $T'>T$, $y(0, t)$ and $z(0, t)$ are the solutions to the homogeneous and nonhomogeneous Cauchy problems \eqref{cauchy} and \eqref{nonhomogeneous1} evaluated at $x=0$, and $E_b:H_t^{(s+1)/3}(0,T)\rightarrow H_t^{(s+1)/3}(\mathbb{R})$ is a fixed bounded extension operator satisfying the additional property that $\supp g_0 \subset [0,T')$. The construction of such an extension is analogous to the one provided in detail in Section 3 of \cite{hm2021} in the context of the linear Schr\"odinger equation. In particular, we note that, for continuous Sobolev data, a compactly supported extension can be constructed thanks to the compatibility between the initial and boundary data at $(x,t)=(0,0)$ (see also discussion above Theorem \ref{ibvpthem01}). 
In this connection, observe that the traces $y(0,t)$ and $z(0,t)$ are well-defined and belong to $H_t^{(s+1)/3}(0,T)$ in view of Theorems \ref{cauchylemma} and \ref{Nonhomthm}.

	\subsubsection*{Solution formula} We obtain a formula to represent weak solutions of the reduced initial-boundary value problem \eqref{linear4} via Fokas's unified transform method. To this end, we first assume that $q$ is sufficiently smooth up to the boundary of $\mathbb{R}_+\times(0,T')$ and decays sufficiently fast as $x\rightarrow\infty$, uniformly in $[0,T']$.

The definition of the standard Fourier transform on $\mathbb R$ applied on the piecewise-defined function 
$$
F(x) = \left\{\begin{array}{ll} f(x), &x>0, \\ 0, &x<0,\end{array}\right.
\quad
f \in L^2(0, \infty),
$$
gives rise to the half-line Fourier transform pair
\begin{equation}\label{hl-ft-def}
\begin{aligned}
&\widehat f(k) = \int_0^\infty e^{-ikx} f(x) dx, \quad \text{Im}(k) \leq 0,
\\
&f(x) = \frac{1}{2\pi} \int_{\mathbb R} e^{ikx} \widehat f(k) dk, \quad x>0.
\end{aligned}
\end{equation}
Note that the above half-line Fourier transform makes sense for all $ \text{Im}(k) \leq 0$ and not just for $k\in\mathbb R$ as its whole-line counterpart.
Taking the half-line Fourier transform \eqref{hl-ft-def} of \eqref{linear4} and integrating over $(0,t)$, we obtain the following spectral identity known as the global relation:
	\begin{equation}\label{globalrelation}
		e^{\omega(k)t} \, \widehat{q}(k,t)=(-\beta k^2+\alpha k+\delta) \, \widetilde{g}_0(\omega(k),t)+(i\beta k-i\alpha) \, \widetilde{g}_1(\omega(k),t)+\beta \, \widetilde{g}_2(\omega(k),t),\quad \text{Im}k\leq 0,
	\end{equation}
	where $\omega$ is given by \eqref{wkdef} and the temporal transforms $\widetilde{g}_j(\omega(k),t)$ are defined by
		\begin{equation}\label{gj-def}
		\widetilde{g}_j(k,t)=\int_0^t e^{kt'}\partial_x^j q(0,t') dt', \quad k\in\mathbb{C},\quad j=0,1,2,
	\end{equation} 
Then, by the inversion formula in \eqref{hl-ft-def}, 
\begin{equation}\label{extra1}
		q(x,t)=\frac{1}{2\pi}\int_{-\infty}^{\infty} e^{ikx-\omega(k)t} \left[ (-\beta k^2+\alpha k+\delta) \, \widetilde{g}_0(\omega(k),t)+(i\beta k-i\alpha) \, \widetilde{g}_1(\omega(k),t)+\beta \, \widetilde{g}_2(\omega(k),t)\right] dk.
	\end{equation}

The transforms $\widetilde{g}_1$ and $\widetilde{g}_2$  involve the unknown boundary values $q_x(0, t)$ and $q_{xx}(0, t)$. In order to eliminate them from \eqref{extra1}, we proceed as follows. For $D:= \{ k\in\mathbb{C}: \text{Re}(\omega(k))<0 \}$, consider the region 
\begin{equation}\label{dpm-def}
D^+:=D\cap\left\{\text{Im}(k) > 0\right\}
= \Big\{\text{Im}(k) > 0: 3\Big(\text{Re}(k) -\frac{\alpha}{3\beta}\Big)^2-\text{Im}(k)^2-\frac{\alpha^2+3\beta\delta}{3\beta^2}<0 \Big\},
\end{equation}
which is depicted in Figure \ref{dplus123-f} for the various signs of the quantity $\alpha^2 + 3\beta\delta$. 
Then, thanks to analyticity (Cauchy's theorem) and exponential decay, it follows that (e.g. see Appendix A in \cite{hm2020} for a detailed explanation in the context of the linear Schr\"odinger equation) 
	\begin{equation}\label{deformationlemma}
		q(x,t)=\frac{1}{2\pi}\int_{\partial D^+} e^{ikx-\omega(k)t} \left[ (-\beta k^2+\alpha k+\delta) \, \widetilde{g}_0(\omega(k),t)+(i\beta k-i\alpha) \, \widetilde{g}_1(\omega(k),t)+\beta \, \widetilde{g}_2(\omega(k),t)\right] dk,
	\end{equation}
	where the contour $\partial D^+$ is positively oriented, i.e. it is traversed in the direction such that $D^+$ stays to the left of the contour, as shown in Figure \ref{dplus123-f}.  

The fact that the integral \eqref{deformationlemma} is taken along the deformed contour $\partial D^+$ will allow us to eliminate the unknown transforms $\widetilde{g}_1$ and $\widetilde{g}_2$ from \eqref{deformationlemma} by employing two additional spectral identities emanating from the global relation \eqref{globalrelation} through suitable transformations that keep the spectral function $\omega(k)$ invariant. In particular, both of these identities are valid along $\partial D^+$ and so we will be able to use them simultaneously. It is important to emphasize that the two additional  identities are not valid along $\mathbb R$, which is the reason why the deformation from $\mathbb R$ to $\partial D^+$ that leads to \eqref{deformationlemma} is necessary.

In order to determine the symmetry transformations, we solve  the equation $\omega(\nu)=\omega(k)$ for $\nu = \nu(k)$. 
\\[2mm]
(i) If $\alpha^2+3\beta\delta>0$, then the two nontrivial symmetries are
\begin{equation}\label{symm1}
\nu_{\pm}(k) 
= 
-\frac{1}{2} \left(k-\frac{\alpha}{\beta}\right)\pm \frac{\sqrt 3 \, i}{2} \left[\left(k-\frac{\alpha}{3\beta}\right)^2 -\frac{4\left(\alpha^2+3\beta\delta\right)}{9\beta^2}\right]^{\frac 12}.
\end{equation}
The square root term in \eqref{symm1} is defined as follows. Denoting the two branch points by $b_\pm := \frac{1}{3\beta} \big(\alpha \pm 2 \sqrt{\alpha^2+3\beta\delta}\big)$, we write  $k-b_\pm = \left|k-b_\pm\right| e^{i\theta_\pm}$ with $-\pi < \theta_- \leq \pi$ and $0 \leq \theta_+ < 2\pi$, which correspond to branch cuts along $[b_+, \infty)$ for $(k-b_+)^{\frac 12}$ and along $(-\infty, b_-]$ for $(k-b_-)^{\frac 12}$. Then, we associate the square root in \eqref{symm1} with the singlevalued function
\begin{equation}\label{sqrt-term}
\left[\left(k-\frac{\alpha}{3\beta}\right)^2 - \frac{4\left(\alpha^2+3\beta\delta\right)}{9\beta^2}\right]^{\frac 12}
=
\sqrt{\left|k-b_+\right| \left|k-b_-\right|} \, e^{i(\theta_+ + \theta_-)/2},
\end{equation}
which is analytic for all $k \notin \mathcal B := (-\infty, b_-] \cup [b_+, \infty)$. In turn, this definition ensures that $\nu_{\pm}$ are analytic for all $k\in\mathbb C \setminus \mathcal B$. Importantly, as shown in Figure \ref{dplus123-f}, $\mathcal B \cap \overline{D^+} = \O$. 
\\[2mm]
(ii) $\alpha^2+3\beta\delta=0$. In this case, the symmetries are the two entire functions
\begin{equation}\label{symm2}
\nu_{\pm}(k) 
= 
-\frac{1}{2} \left(k-\frac{\alpha}{\beta}\right)\pm \frac{\sqrt 3 \, i}{2} \left(k-\frac{\alpha}{3\beta}\right),
\end{equation}
as shown in Figure \ref{dplus123-f}.
\\[2mm]
(iii) $\alpha^2+3\beta\delta<0$. In that case, the symmetries are again given by \eqref{symm1}; however, as the branch points $b_\pm$ are now complex conjugates along the line $\text{Re}(k) = \frac{\alpha}{3\beta}$, we write $k-b_\pm = \left|k-b_\pm\right| e^{i(\theta_\pm-\pi/2)}$ with $0 \leq \theta_\pm < 2\pi$ and corresponding branch cuts along the vertical half-lines from $b_\pm$ to $\frac{\alpha}{3\beta}-i\infty$, so that
\begin{equation}\label{sqrt-term2}
\left[\left(k-\frac{\alpha}{3\beta}\right)^2 - \frac{4\left(\alpha^2+3\beta\delta\right)}{9\beta^2}\right]^{\frac 12}
=
\sqrt{\left|k-b_+\right| \left|k-b_-\right|} \, e^{i(\theta_+ + \theta_--\pi)/2}
\end{equation}
is singlevalued and analytic for all $k\in\mathbb C \setminus \widetilde{\mathcal B}$, where $\widetilde{\mathcal B}$ is the finite vertical segment connecting $b_+$ and $b_-$, as shown in Figure \ref{dplus123-f}. Note that $\widetilde B \cap \overline{D^+} \neq \O$ as part of the branch cut $\widetilde B$ lies inside the region $D^+$. For this reason, \textit{before} employing the symmetries $\nu_\pm$ for the elimination of the unknown transforms $\widetilde g_2$ and $\widetilde g_1$ from \eqref{deformationlemma}, we use Cauchy's theorem to deform the contour $\partial D^+$ in \eqref{deformationlemma} to the modified contour $\partial \widetilde D^+$, which corresponds to the positively oriented boundary of the region $\widetilde D^+$ shown in Figure \ref{dplus23mod-f}. This way, the branch cut $\widetilde {\mathcal B}$ is avoided \textit{prior to} the use of the symmetries $\nu_\pm$, allowing us to take advantage of analyticity inside the region $\widetilde D^+$ later.

In view of the above discussion, we rewrite \eqref{deformationlemma} as 
	\begin{equation}\label{deformationGamma}
		q(x,t)=\frac{1}{2\pi}\int_{\Gamma} e^{ikx-\omega(k)t}\left[ \left(-\beta k^2+\alpha k+\delta\right) \widetilde{g}_0(\omega(k),t)+i\left(\beta k-\alpha\right) \widetilde{g}_1(\omega(k),t)+\beta \, \widetilde{g}_2(\omega(k),t)\right] dk,
	\end{equation}
	where the integration contour $\Gamma$ is given by
	\begin{equation}\label{Gamma-def}
		\Gamma =\left\{
		\begin{array}{ll}
			\partial D^+, &\alpha^2+3\beta\delta > 0, \\
			\partial \widetilde{D}^+, &\alpha^2+3\beta\delta \leq 0.
		\end{array}
		\right.
	\end{equation}

\begin{figure}[ht!]
		\centering
\includegraphics[width=5.9cm, height=2.75cm]{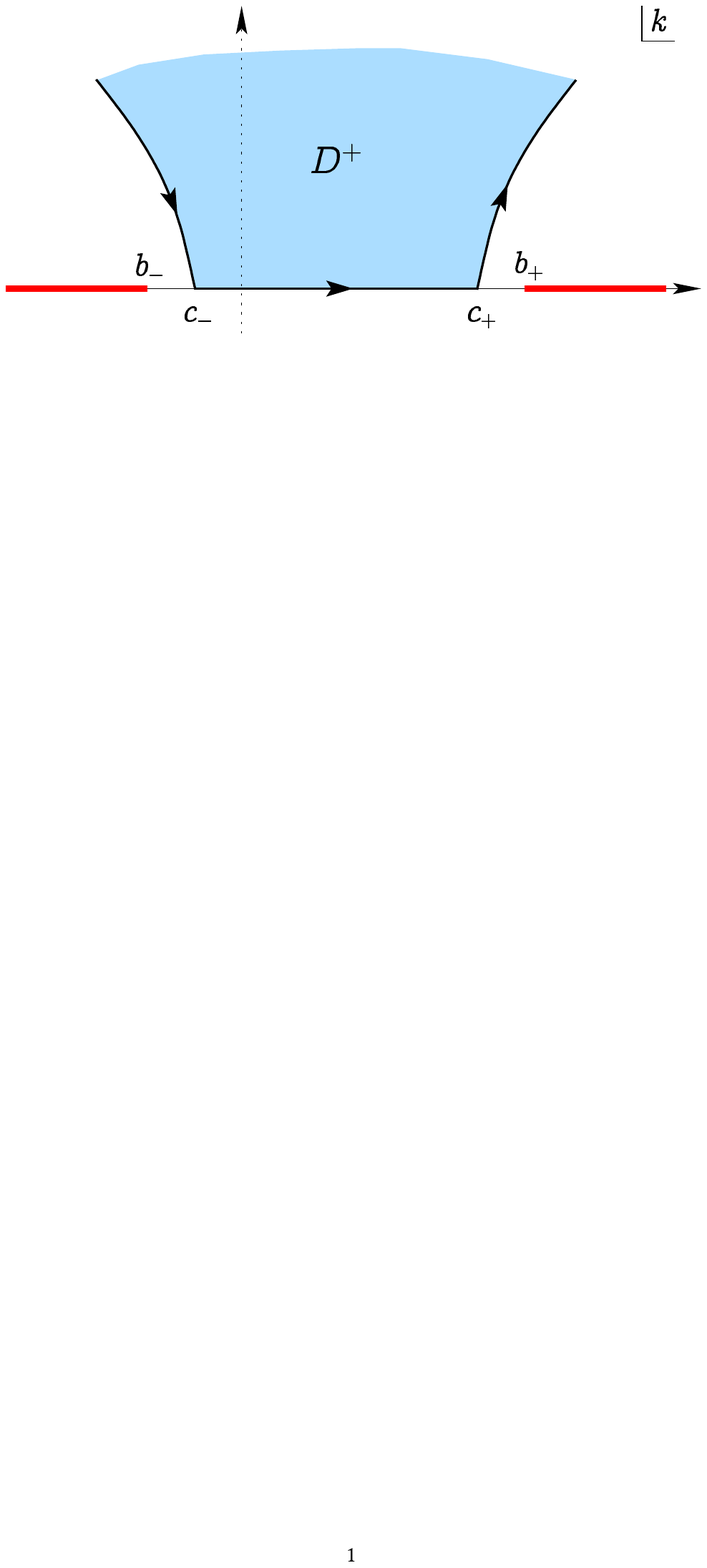}
\hspace*{3mm}
\includegraphics[width=4cm, height=2.75cm]{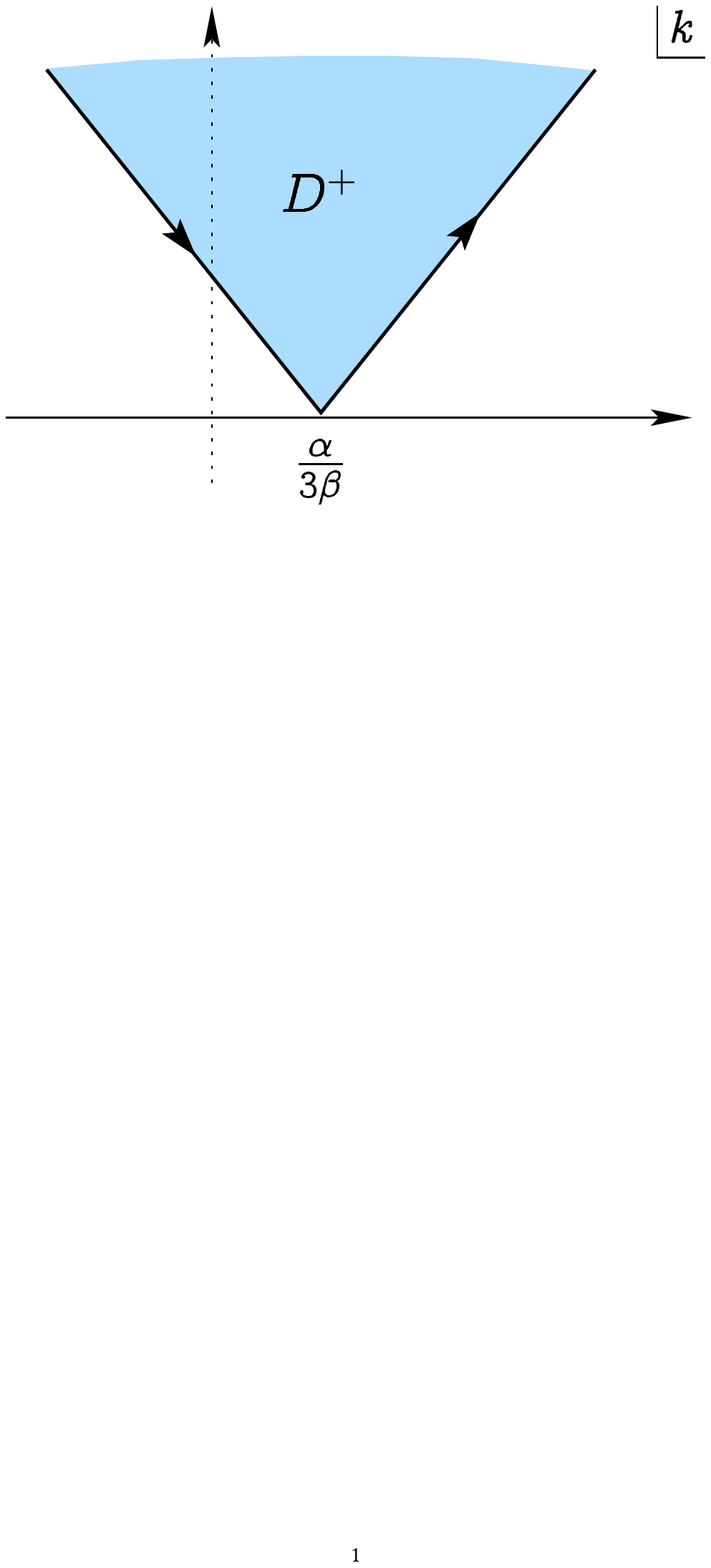}
\hspace*{3mm}
\includegraphics[width=4.2cm, height=2.75cm]{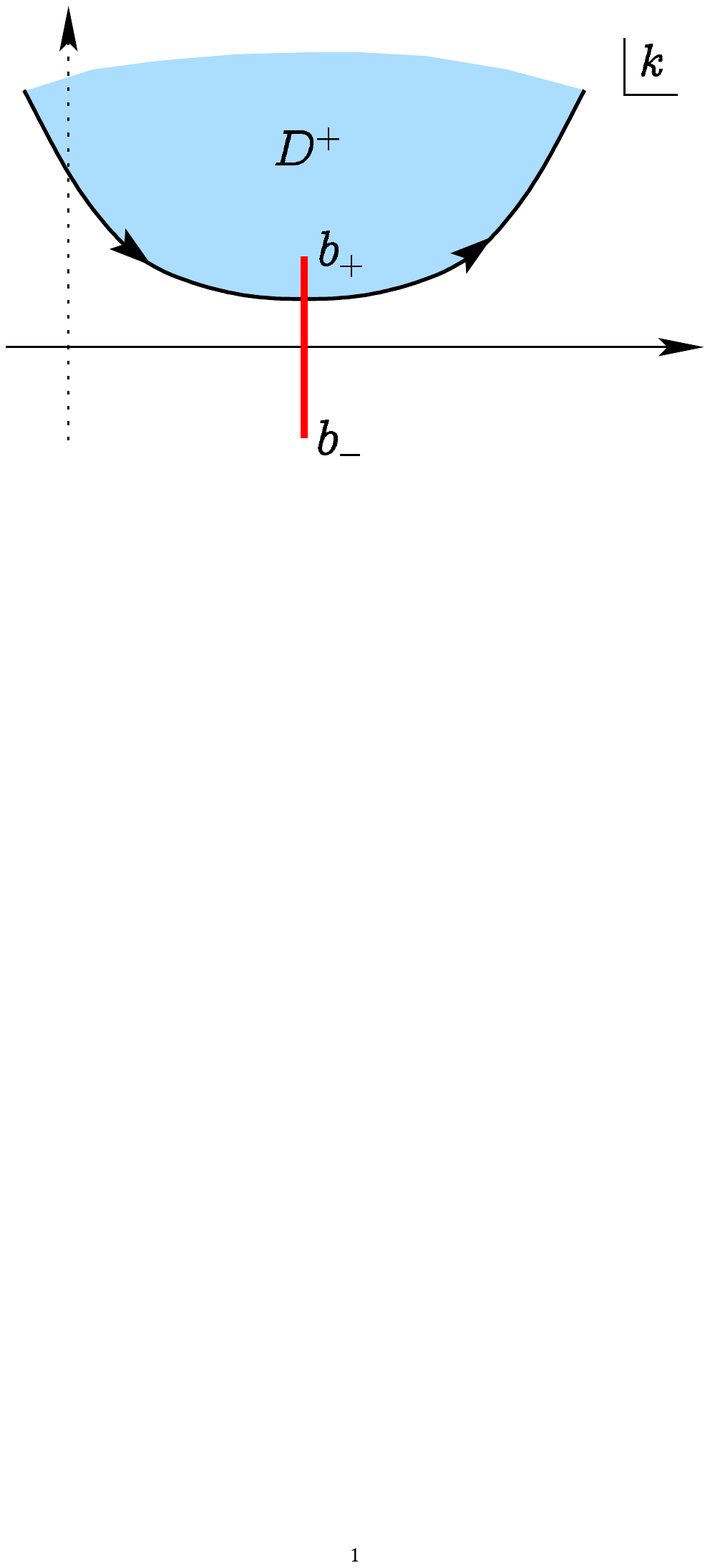}
		\caption{The region $D^+$ defined by \eqref{dpm-def} for $\alpha^2+3\beta\delta>0$ (left), $\alpha^2+3\beta\delta=0$ (center), and $\alpha^2+3\beta\delta<0$ (right). In the first case, the square root branch cut $\mathcal B = (-\infty, b_-] \cup [b_+, \infty)$ with branch points $b_\pm = \frac{1}{3\beta} \left(\alpha \pm 2 \sqrt{\alpha^2 + 3\beta\delta}\right)$ (shown in red) stays outside the region $D^+$, while in the second case there is no branching. On the other hand, in the third case the branch cut $\widetilde B$ (shown in red) is taken along the vertical line segment connecting $b_+$ to $b_-$ and so part of it lies in $D^+$, thus a local deformation around $\widetilde B$ is performed as shown in Figure \ref{dplus23mod-f} below.
}
\label{dplus123-f}
\end{figure}
	
\begin{figure}[ht!]
\centering
\includegraphics[width=4.1cm, height=3.1cm]{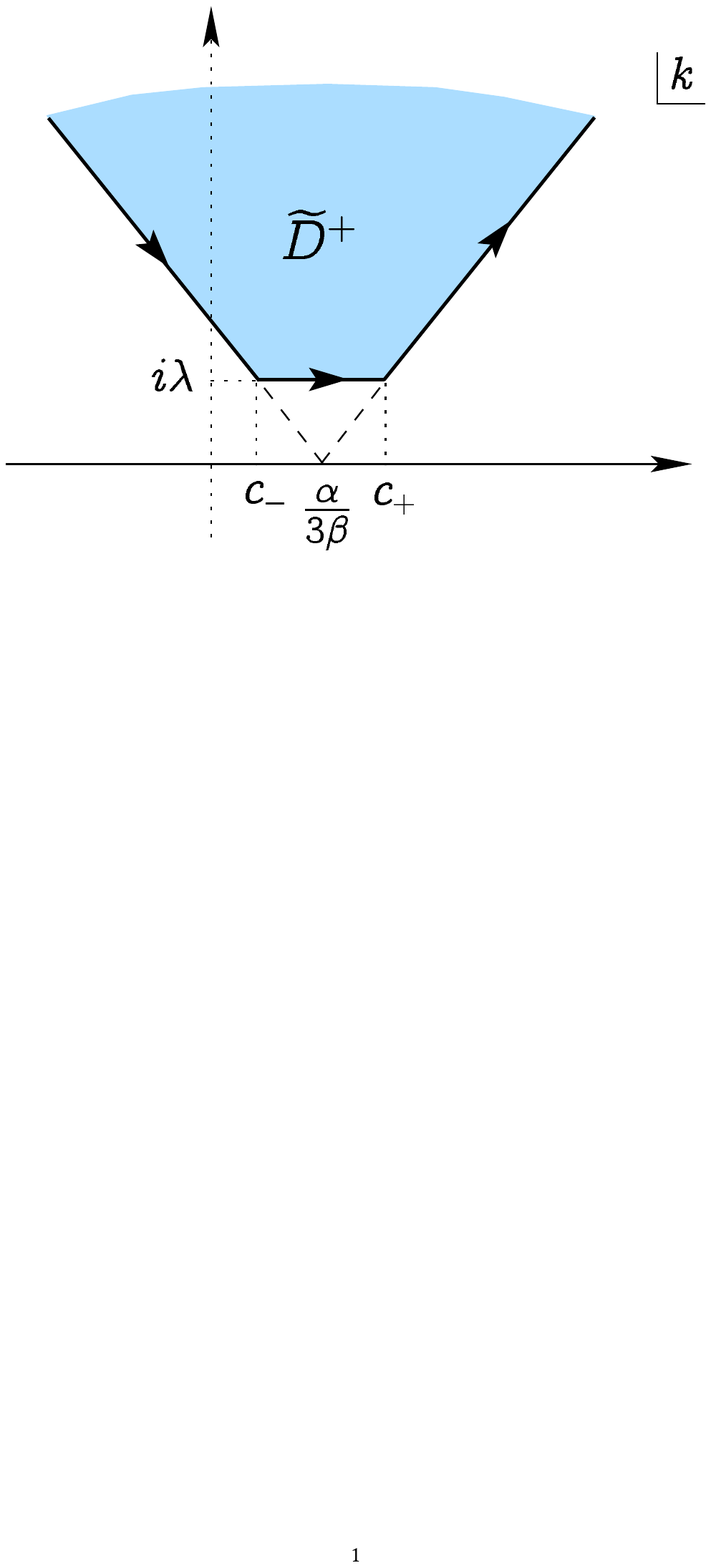}
\hspace*{6mm}
\includegraphics[width=4.3cm, height=3cm]{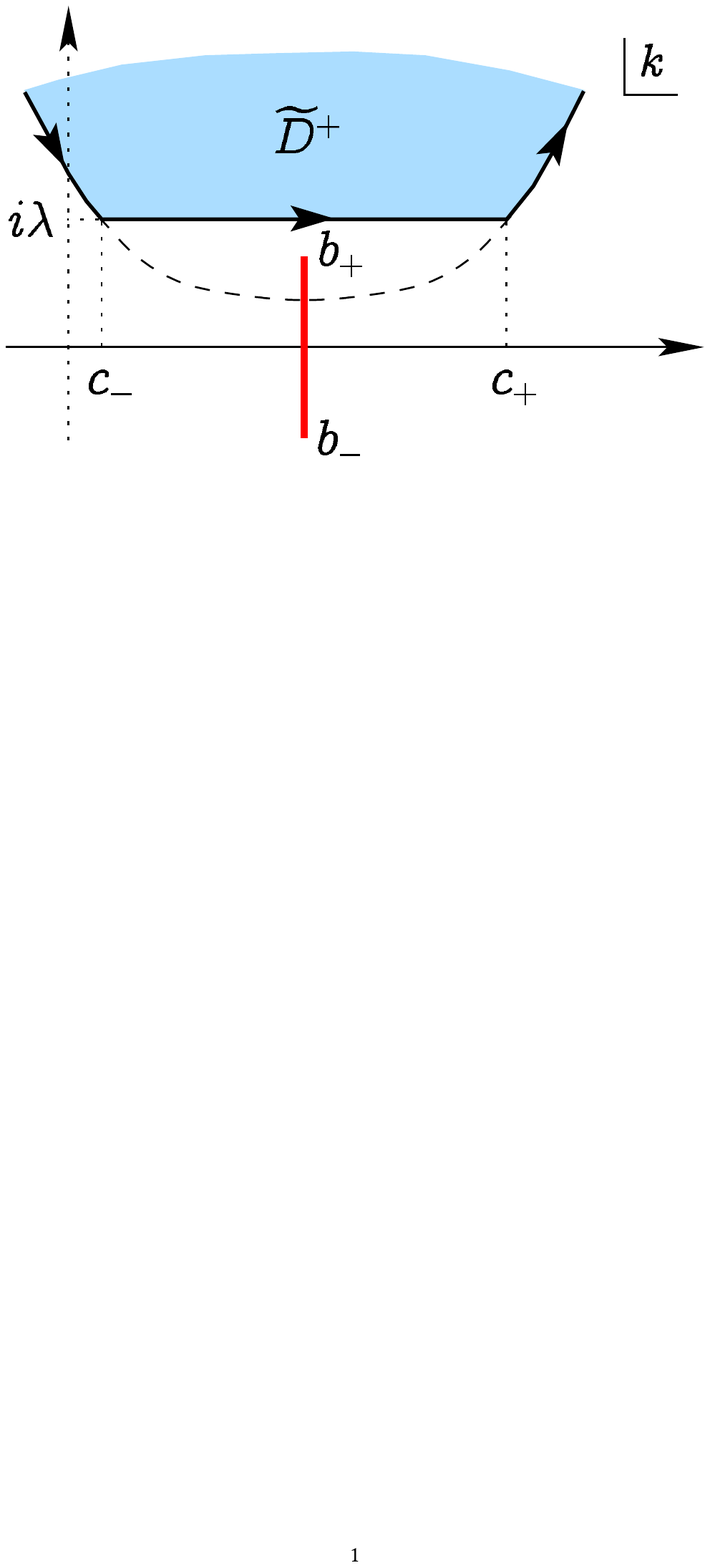}
\caption{Deformation of $\partial D^+$ to $\partial \widetilde D^+$ for $\alpha^2+3\beta\delta = 0$ (left) and $\alpha^2+3\beta\delta < 0$ (right). The left deformation is carried out in order to stay away from the point $\frac{\alpha}{3\beta}$, which is a zero of the quantity $\nu_-(k) - \nu_+(k)$. The right deformation is done in order to avoid crossing the branch cut $\widetilde B$ (shown in red). 
}
\label{dplus23mod-f}
\end{figure}

Replacing $k$ by $\nu_\pm(k)$ in the global relation \eqref{globalrelation} and using the fact that $\omega(\nu_\pm(k)) = \omega(k)$, we get the spectral identities
	\begin{equation}\label{global2}
	\begin{aligned}
		e^{\omega(k)t} \, \widehat{q}(\nu_\pm(k),t)&=\left(-\beta \nu_\pm^2(k)+\alpha \nu_\pm(k)+\delta\right) \widetilde{g}_0(\omega(k),t)+i\left(\beta \nu_\pm(k)-\alpha\right) \widetilde{g}_1(\omega(k),t)\\
		&\quad +\beta \, \widetilde{g}_2(\omega(k),t),\quad \text{Im}(\nu_\pm(k))\leq 0.
	\end{aligned}
	\end{equation}
We emphasize that the above identities are valid only for $k$ such that $\text{Im}(\nu_\pm(k))\leq 0$. Thus, in order to employ them for 
the elimination of the unknown boundary values from \eqref{deformationGamma}, we need to ensure that $\Gamma \subseteq \left\{\text{Im}(\nu_\pm(k)) \leq 0\right\}$. This is proved in the following lemma.
	\begin{lem}\label{nupm-l}
Let $\nu_\pm=\nu_\pm(k)$ be the nontrivial (i.e. $\nu_\pm \not\equiv k$) solutions of the equation $\omega(\nu)=\omega(k)$ as given by \eqref{symm1} or \eqref{symm2}, depending on the value of $\alpha^2+3\beta\delta$.	If $k\in \overline{D^+}$, then $\textnormal{Im} (\nu_\pm)\leq 0$.
	\end{lem}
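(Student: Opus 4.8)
\textbf{Plan for the proof of Lemma \ref{nupm-l}.}

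The strategy is to exploit the fact that both $\nu_+(k)$ and $\nu_-(k)$ solve $\omega(\nu)=\omega(k)$, together with $\nu_+(k)\nu_-(k)$ and $\nu_+(k)+\nu_-(k)$ being simple rational expressions in $k$ that come directly from the symmetric function structure of the cubic. Writing $k,\nu_+,\nu_-$ as the three roots of the equation $\omega(\zeta)=\omega(k)$ in the unknown $\zeta$, Vieta's formulas for this cubic give
\begin{equation}\label{vieta-plan}
k+\nu_+(k)+\nu_-(k)=\frac{2\alpha}{\beta},\qquad
k\,\nu_+(k)+k\,\nu_-(k)+\nu_+(k)\,\nu_-(k)=\frac{\delta}{\beta}+\frac{\alpha^2}{\beta^2}\ \text{(up to a sign convention)}.
\end{equation}
From the first relation, $\text{Im}(\nu_+)+\text{Im}(\nu_-)=-\text{Im}(k)$. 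Hence if $k\in\overline{D^+}$, so that $\text{Im}(k)\ge 0$, we immediately get $\text{Im}(\nu_+)+\text{Im}(\nu_-)\le 0$, which already shows they cannot both have strictly positive imaginary part. The real work is to upgrade this to the statement that \emph{each} of $\text{Im}(\nu_\pm)$ is $\le 0$.

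First I would reduce to the boundary: the functions $k\mapsto\text{Im}(\nu_\pm(k))$ are continuous on $\overline{D^+}$ (analytic there, since the branch cuts $\mathcal B$ or $\widetilde{\mathcal B}$ are disjoint from $\overline{D^+}$ by the discussion preceding the lemma — this is precisely why the contour was deformed to $\Gamma$), so by the maximum principle applied to the harmonic function $\text{Im}(\nu_\pm)$ it suffices to check the inequality on $\partial D^+$. On $\partial D^+$ one has $\text{Re}(\omega(k))=0$, i.e. $\omega(k)$ is purely imaginary; call this common value $i\tau$ with $\tau\in\mathbb R$. Then $\nu_\pm(k)$ are roots of $\omega(\zeta)=i\tau$, and I would use the explicit factorization: the three roots of $-i\beta\zeta^3+i\alpha\zeta^2+i\delta\zeta=i\tau$ are $k$ itself (which lies on $\partial D^+$, hence $\text{Re}(\omega)=0$) together with $\nu_\pm$. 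Since the polynomial $\beta\zeta^3-\alpha\zeta^2-\delta\zeta+\tau$ has real coefficients, its non-real roots come in conjugate pairs. Two cases: either all three roots are real — but a real root $\zeta_0$ gives $\text{Im}(\zeta_0)=0\le 0$, done; or there is exactly one real root and a conjugate pair $a\pm ib$. In the latter case the real root cannot be $k$ (as $k$ may be non-real on $\partial D^+$), so I need to identify which of $\{k,\nu_+,\nu_-\}$ is real and argue that the conjugate pair $a\pm ib$, being $\{k,\text{one of }\nu_\pm\}$ or $\{\nu_+,\nu_-\}$, forces the required sign. Here the relation $\text{Im}(\nu_+)+\text{Im}(\nu_-)=-\text{Im}(k)\le 0$ combined with the conjugate-pair structure pins everything down: if $\nu_+=\overline{\nu_-}$ then $\text{Im}(\nu_+)+\text{Im}(\nu_-)=0$ so $\text{Im}(k)=0$ and one must then look at $\nu_\pm$ directly via \eqref{symm1}; if instead $k=\overline{\nu_+}$ (say), then $\text{Im}(\nu_-)=-2\,\text{Im}(k)\le 0$ and $\text{Im}(\nu_+)=-\text{Im}(k)\le 0$, finishing that subcase.

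For the residual situation where $\text{Im}(k)=0$ on $\partial D^+$ (the curve $\partial D^+$ does meet the real axis), I would just evaluate \eqref{symm1}/\eqref{symm2} directly: with $k\in\mathbb R$ the quantity under the square root, $\left(k-\tfrac{\alpha}{3\beta}\right)^2-\tfrac{4(\alpha^2+3\beta\delta)}{9\beta^2}$, is real, and the sign of its real part decides whether $\nu_\pm$ acquire an imaginary part from the factor $\tfrac{\sqrt3\,i}{2}(\cdots)^{1/2}$ or from $(\cdots)^{1/2}$ being itself imaginary; tracking the chosen branch \eqref{sqrt-term} (with the stipulated arguments $\theta_\pm$) shows $\text{Im}(\nu_\pm)\le 0$ at exactly the real points that belong to $\overline{D^+}$, using the explicit description of $D^+$ in \eqref{dpm-def} to know which real $k$ are admissible. \textbf{The main obstacle} I anticipate is the bookkeeping of branch choices: making sure that the single-valued square root \eqref{sqrt-term} (resp.\ \eqref{sqrt-term2} in the case $\alpha^2+3\beta\delta<0$, on the deformed contour $\partial\widetilde D^+$) produces the \emph{correct sign} of $\text{Im}(\nu_\pm)$ uniformly along all of $\Gamma$, rather than on pieces of it — in particular handling the transition across the real axis and, in case (iii), the local deformation around $\widetilde{\mathcal B}$ without the imaginary part changing sign. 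The conjugate-pair/Vieta argument is robust and cheap; the branch-cut analysis is where the care lies, and I would organize it by the three sign regimes of $\alpha^2+3\beta\delta$ exactly as the figures are organized.
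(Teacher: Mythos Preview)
Your Vieta/conjugate-pair argument on $\partial D^+$ is a nice idea and essentially works there (with the minor correction that the sum of the three roots is $\alpha/\beta$, not $2\alpha/\beta$; your subcase arithmetic ``$\text{Im}(\nu_-)=-2\,\text{Im}(k)$'' should read $\text{Im}(\nu_-)=0$ when $\nu_+=\overline{k}$). But the reduction from $\overline{D^+}$ to $\partial D^+$ via the maximum principle has two genuine gaps.

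First, $D^+$ is an \emph{unbounded} region (it is the region between two branches of a hyperbola in the upper half-plane, opening to infinity), so the ordinary maximum principle for harmonic functions does not apply. You would need a Phragm\'en--Lindel\"of argument with control of $\text{Im}(\nu_\pm)(k)$ as $|k|\to\infty$ inside $D^+$, and you have not supplied this. Second, your assertion that the branch cuts are ``disjoint from $\overline{D^+}$'' is false in case (iii): the paper states explicitly that $\widetilde{\mathcal B}\cap\overline{D^+}\neq\emptyset$ when $\alpha^2+3\beta\delta<0$, which is precisely why the contour had to be deformed to $\partial\widetilde D^+$. Thus $\nu_\pm$, as defined by \eqref{symm1}--\eqref{sqrt-term2}, are not analytic on all of $D^+$ in that case, so $\text{Im}(\nu_\pm)$ is not harmonic there and your harmonic-function argument collapses. (The set-valued map $k\mapsto\{\nu_+(k),\nu_-(k)\}$ is continuous across the cut, but the individual branches swap, so you cannot apply the maximum principle to either one separately.)

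The paper avoids all of this by a direct algebraic route: writing $\nu=\nu_R+i\nu_I$, $k=k_R+ik_I$ and taking real and imaginary parts of the quadratic relation $\beta(\nu^2+k\nu+k^2)-\alpha(\nu+k)-\delta=0$, it solves explicitly for $\nu_I$ as
\[
\nu_I=-\tfrac{k_I}{2}\pm\sqrt{-\tfrac{d}{2}+\sqrt{\tfrac{d^2}{4}+c^2k_I^2}}
\]
for certain real $c,d$ depending on $k_R$, and then checks by elementary manipulation of the defining inequality \eqref{dpm-def} of $\overline{D^+}$ that the positive-sign branch also satisfies $\nu_I\le 0$. This computation is insensitive to branch-cut choices and works uniformly on $\overline{D^+}$, bounded or not. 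Your boundary argument could be grafted onto this approach (it gives a cleaner treatment of $\partial D^+$), but it does not, on its own, reach the interior.
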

	\begin{proof} For all $k=k_R+ik_I\in \overline{D^+}$ such that $\nu(k)\neq k$ satisfies $\omega(\nu)=\omega(k)$, we must have $\beta(\nu^2+k\nu+k^2)-\alpha(\nu+k)-\delta=0$. Writing  $\nu=\nu_R+i\nu_I$ and taking real and imaginary parts, this equation is equivalent to the system
\begin{equation}\label{inv1}
\widetilde \nu_R \widetilde \nu_I = c k_I,
\quad
\widetilde \nu_R^2-\widetilde \nu_I^2=d,	
\end{equation}
where $\widetilde \nu_R=\nu_R+\frac{k_R}{2}-\frac{\alpha}{2\beta}$, $\widetilde \nu_I=\nu_I+\frac{k_I}{2}$, $c=\frac{\alpha}{4\beta}-\frac{3k_R}{4}$ and $d=-\frac34k_R^2+\frac34k_I^2+ \frac{\alpha}{2\beta}k_R+\frac{\alpha^2}{4\beta^2}+ \frac{\delta}{\beta}$. 
If $\widetilde \nu_I=0$, then $\nu_I=-\frac{k_I}{2}\leq 0$ as $k\in\overline{D^+}$ and we are done. So let us assume  $\widetilde \nu_I\neq 0$.  Then, combining the two equations in \eqref{inv1} we obtain $\widetilde \nu^4_I+d\widetilde \nu_I^2-c^2k_I^2=0$, which can be solved for $\widetilde \nu_I^2$ to yield
$
\widetilde \nu_I^2 = \frac{-d \pm \sqrt{d^2+4c^2k_I^2}}{2} = -\frac d2 \pm \sqrt{\frac{d^2}{4} + c^2 k_I^2}.
$
Note that only the positive sign is acceptable since $\widetilde \nu_I \in \mathbb R \Rightarrow \widetilde \nu_I^2\geq 0$. That is, 
$
\widetilde \nu_I^2 = -\frac d2 + \sqrt{\frac{d^2}{4} + c^2 k_I^2}
$
implying
$
\widetilde \nu_I = \pm \sqrt{-\frac d2 + \sqrt{\frac{d^2}{4} + c^2 k_I^2}}.
$
In turn, from the first of equations \eqref{inv1} we get
$
\widetilde \nu_R = \pm \frac{ck_I}{\sqrt{-\frac d2 + \sqrt{\frac{d^2}{4} + c^2 k_I^2}}}
$
and so
\begin{equation}\label{nupm2}
\nu_R = \pm \frac{ck_I}{\sqrt{-\frac d2 + \sqrt{\frac{d^2}{4} + c^2 k_I^2}}} - \frac{k_R}{2} + \frac{\alpha}{2\beta},
\quad
\nu_I = -\frac{k_I}{2} \pm \sqrt{-\frac d2 + \sqrt{\frac{d^2}{4} + c^2 k_I^2}}.
\end{equation}
Observe that the radicand of the outer square root involved in the above  expressions is a non-negative number and hence that square root is a real (non-negative) number.
In addition, note that expressions \eqref{nupm2} are consistent with equations \eqref{symm1} and \eqref{symm2}; however, their dependence on $k_R$ and $k_I$ (as opposed to $k$) is not suitable for discussing the analyticity of the associated expressions for $\nu$, which is why \eqref{symm1} and \eqref{symm2} were used earlier for that purpose. On the other hand, \eqref{nupm2} are the forms convenient for proving Lemma \ref{nupm-l}.
%
%

The case of the negative square root sign in \eqref{nupm2} is straightforward as then $\nu_I \leq 0$ for all $k_I\ge 0$ and, in particular, for $k\in \overline{D^+}$ as desired. On the other hand, the case of positive square root sign  in\eqref{nupm2} requires more work. More specifically, by definition \eqref{dpm-def}, for $k \in \overline{D^+}$  we have
\begin{equation}\label{dpm-def2}
3\left(k_R -\frac{\alpha}{3\beta}\right)^2-k_I^2-\frac{\alpha^2+3\beta\delta}{3\beta^2}\le 0, 
\end{equation}
which can be rearranged to
$-\frac{3}{4}k_R^2+\frac{1}{4}k_I^2+\frac{\alpha}{2\beta}k_R +\frac{\delta}{4\beta}\ge 0.$ 
For $k_I\neq 0$ (note that $k_I = 0$ implies $\nu_I =0$ and we are done), this is equivalent to
$ 
\frac{k_I^4}{16}+k_I^2\frac{d}{4}\ge c^2k_I^2
$ 
or, after completing the square,
		$\left(\frac{k_I^2}{4}+\frac{d}{2}\right)^2\ge \frac{d^2}{4}+c^2k_I^2.$ 
Hence, $\frac{k_I^2}{4}\ge -\frac{d}{2}+\sqrt{\frac{d^2}{4}+c^2k_I^2}$ or  $\frac{k_I^2}{4}\le -\frac{d}{2}-\sqrt{\frac{d^2}{4}+c^2k_I^2}$ and, as the second inequality is not possible because it would imply that $k_I^2 \leq 0$, taking the square root of the first inequality and using the fact that $k_I\ge 0$ for $k\in\overline{D^+}$, we obtain
$
0\ge -\frac{k_I}{2}+\sqrt{-\frac{d}{2}+\sqrt{\frac{d^2}{4}+c^2k_I^2}}=\nu_I
$
as desired.

The proof so far has been under the assumption that $\nu(k)\neq k$; however, although $\nu \not\equiv k$ by hypothesis,  there could still be points in $\overline{D^+}$ where $\nu(k) = k$ and hence this scenario must also be considered. In that case, recalling that $\nu_\pm$ satisfy 
		$\beta(\nu_\pm^2+k\nu_\pm+k^2)-\alpha(\nu_\pm+k)-\delta=0$, we infer that if $k\in\mathbb{C}$ is such that $\nu_\pm(k)=k$  then $3\beta k^2-2\alpha k - \delta=0$.
%
If $\alpha^2+3\beta\delta\ge 0$, then $k=k_\pm=\frac{\alpha}{3\beta}\pm\frac{\sqrt{\alpha^2+3\beta\delta}}{3\beta}\in\mathbb R$ i.e.  $k_I=\text{Im}(\nu_\pm)=0$ and we are done. 
If $\alpha^2+3\beta\delta< 0$, then $k=k_\pm=\frac{\alpha}{3\beta}\pm i\frac{\sqrt{-(\alpha^2+3\beta\delta)}}{3\beta}$. Note that $k_{-}\notin \overline{D^+}$ since $\text{Im}(k_-) < 0$. Also, $k_{+}\notin \overline{D^+}$ because if $\alpha^2+3\beta\delta< 0$ and $k\in \overline{D^+}$ then by \eqref{dpm-def2} we must have $k_I\ge \sqrt{-\frac{\alpha^2+3\beta\delta}{3\beta^2}} = \frac{\sqrt{-(\alpha^2+3\beta\delta)}}{\sqrt 3\beta}>\frac{\sqrt{-(\alpha^2+3\beta\delta)}}{3\beta} = \text{Im}(k_+)$. This completes the proof of Lemma~\ref{nupm-l}.
\end{proof}

	Thanks to Lemma \ref{nupm-l}, both of the identities \eqref{global2} are valid for $k\in \overline{D^+}$ and hence can be solved simultaneously as a system for the unknown transforms $\widetilde{g}_1(\omega(k),t)$ and $\widetilde{g}_2(\omega(k),t)$ to yield
\begin{align}\label{g1}
\widetilde{g}_1(\omega(k),t)
&=
\frac{e^{\omega(k)t}}{i\beta \left[\nu_+(k)-\nu_-(k)\right]}[\widehat{q}(\nu_+(k),t)-\widehat{q}(\nu_-(k),t)] +ik\widetilde{g}_0(\omega(k),t),
\\
\label{g2}
\widetilde{g}_2(\omega(k),t)
&=
\frac{e^{\omega(k)t}}{\beta^2 \left[\nu_-(k)-\nu_+(k)\right]} \left[\left(\beta \nu_-(k)-\alpha\right)\widehat{q}(\nu_+(k),t)-\left(\beta \nu_+(k)-\alpha\right)\widehat{q}(\nu_-(k),t)\right]
\\
&\quad
-k^2\widetilde{g}_0(\omega(k),t).
\nonumber
\end{align}
Substituting these expressions in the integral representation \eqref{deformationGamma}, we obtain
	\begin{equation}\label{vanish}
	\begin{aligned}
		q(x,t)&=\frac{1}{2\pi}\int_{\Gamma}e^{ikx-\omega(k)t} \left(-3\beta k^2+2\alpha k+\delta\right) \widetilde{g}_0(\omega(k),t)dk
		\\
		&\quad
		+\frac{1}{2\pi}\int_{\Gamma}e^{ikx}\left[\frac{\nu_-(k)-k}{\nu_-(k)-\nu_+(k)} \, \widehat{q}(\nu_+(k),t)-\frac{\nu_+(k)-k}{\nu_-(k)-\nu_+(k)} \, \widehat{q}(\nu_-(k),t)\right]dk. 
		\end{aligned}
	\end{equation}
	
Note that the definition \eqref{Gamma-def} of $\Gamma$ in conjunction with the choices of the contour $\partial \widetilde D^+$ shown in Figure~\ref{dplus23mod-f} ensure that $\nu_-(k)-\nu_+(k)$ stays away from zero. Indeed, for $\alpha^2 +3\beta\delta>0$ the solutions of $\nu_-(k)-\nu_+(k)=0$ occur at the branch points $b_\pm$, which lie on the real axis and outside segment $\big[\frac{1}{3\beta}(\alpha - \sqrt{\alpha^2+3\beta\delta}), \frac{1}{3\beta}(\alpha + \sqrt{\alpha^2+3\beta\delta})\big]$  forming the base of $\Gamma=\partial D^+$ (see left panel of Figure \ref{dplus123-f}). 
Moreover, for $\alpha^2 +3\beta\delta=0$ the quantity $\nu_-(k)-\nu_+(k)$ vanishes at $\frac{\alpha}{3\beta}$, which is bypassed by $\Gamma = \partial D^+$ as shown on the left panel of Figure \ref{dplus23mod-f}. Finally, for $\alpha^2 +3\beta\delta<0$ the roots of $\nu_-(k)-\nu_+(k)=0$ are again at the branch points $b_\pm$ and so they stay below the contour $\Gamma = \partial D^+$ depicted on the right panel of Figure \ref{dplus23mod-f}. 

Therefore, using analyticity (Cauchy's theorem) along with exponential decay as $|k| \to \infty$ inside $D^+$ or $\widetilde D^+$, as appropriate, we conclude that the second $k$-integral on the right-hand side of \eqref{vanish} is equal to zero. (To see the decay, note that $|e^{ikx-i\nu_\pm y}| = e^{-\text{Im}(k) x + \text{Im}(\nu_\pm) y}$ and use Lemma \ref{nupm-l} together with the fact that $x, y > 0$.)
Consequently, we deduce the solution formula
	\begin{equation}\label{representation0}
		q(x,t)=-\frac{i}{2\pi}\int_{\Gamma}e^{ikx-\omega(k)t} \omega'(k) \, \widetilde{g}_0(\omega(k),t)dk.
	\end{equation} 

In fact, noting that 
$
\big| e^{-\omega(k) (t-t')} \big|
=
e^{\text{Re}(\omega(k)) (t'-t)}
$
and recalling that, by definition \eqref{dpm-def}, $\text{Re}(\omega(k))<0$ inside $D^+$, we see that the exponential $e^{ikx-\omega(k)(t-t')}$ decays as $|k|\to\infty$ inside $D^+$ for all $x>0$,  $t'>t$. Thus, combining this decay with analyticity, in the second argument of the time transform $\widetilde g_0$ we can replace $t$ by any fixed $T'>t$ and thereby obtain the following equivalent version of the solution formula \eqref{representation0}, which is more convenient for the purpose of linear estimates as we will see below:
\begin{equation}\label{representation}
q(x,t) = -\frac{i}{2\pi}\int_{\Gamma}e^{ikx-\omega(k)t} \omega'(k) \, \widetilde{g}_0(\omega(k),T')dk.
\end{equation} 

	\subsubsection*{Compatibility between the data}  Recall that the initial and boundary data of the initial-boundary value problem \eqref{linear1} belong in the $L^2$-based Sobolev spaces $H_x^s(\mathbb{R}_+)$ and $H_t^{(s+1)/3}(0,T)$, respectively. Moreover, in view of the range of validity of Theorem \ref{Nonhomthm} for the nonhomogeneous Cauchy problem established earlier, as well as of Theorem \ref{ibvpthem01} for the reduced initial-boundary value problem proved below, we will restrict our attention to the range $0\leq s \leq 2$ with $s\neq \frac 12$.  
	
	For $\frac 12<s\leq 2$,  continuity becomes relevant to our analysis and it turns out that we need to impose a compatibility condition between the initial and the boundary data. More specifically, note that if $\frac 12 < s \leq 2$ then $\frac12<\frac{s+1}{3}\leq1$. Therefore, both of the traces $u_0(0)$ and $g(0)$ are well-defined. 
Furthermore, since $y(0,\cdot)$ and $z(0,\cdot)$ belong to $H_t^{(s+1)/3}(0,T)$ by Theorems \ref{cauchylemma} and  \ref{Nonhomthm}, the traces $y(0,0)$ and  $z(0,0)$ are well-defined and equal to $u_0(0)$ and $0$, respectively, due to continuity and the initial conditions in problems \eqref{cauchy} and \eqref{nonhomogeneous1}. Thus, using continuity at zero for the function $g_0 \in H_t^{(s+1)/3}(\mathbb R)$ defined in \eqref{linear4}, we have

$$
g_0(0) = \lim_{t \to 0^+} g_0(t) = \lim_{t \to 0^+} \left[g(t)-y(0, t)-z(0, t)\right] = g(0) - y(0, 0) - z(0, 0) = g(0) - u_0(0)
$$
which, upon imposing the (natural) compatibility condition
\begin{equation}\label{comp-cond}
u_0(0) = g(0), \quad \frac 12 < s \leq 2,
\end{equation}
implies that the boundary datum of the reduced problem \eqref{linear4} vanishes at $t=0$, i.e.
\begin{equation}\label{comp-cond-0}
g_0(0) = 0, \quad \frac 12 < s \leq 2.
\end{equation}
This feature will turn out to be convenient in the proof of Theorem \ref{ibvpthem01} which follows next.
	\subsubsection*{Sobolev-type estimates} 
	We  now establish the basic space estimate in the initial-boundary value problem setting. More precisely,  we  prove
\begin{thm}\label{ibvpthem01} 
Let $s\ge 0$. Then, the unique solution of the reduced initial-boundary value problem~\eqref{linear4} satisfies
\begin{equation}\label{ribvp-se}
\|q(\cdot,t)\|_{H_x^s(\mathbb{R}_+)}\le c \, \big(1+\sqrt{T'}e^{cT'}\big) \|g_0\|_{H_t^{\frac{s+1}{3}}(0,T')}
\end{equation}
uniformly for $t \in [0,T']$, where $c>0$ is a constant that only depends on $\alpha,\beta,\delta$ and $s$. 
\end{thm}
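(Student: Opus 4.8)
The plan is to estimate $q$ head-on from the weak solution formula \eqref{representation}, exploiting the geometry of the contour $\Gamma$ in \eqref{Gamma-def} together with the cubic growth and the sign structure of $\omega$. First I would split $\Gamma=\Gamma_{\mathrm c}\cup\Gamma_\infty$, where $\Gamma_{\mathrm c}$ is compact (the base of $\partial D^+$, plus the small loop around $\widetilde{\mathcal B}$ that is present when $\alpha^2+3\beta\delta\le0$) and $\Gamma_\infty$ consists of the two unbounded legs, and correspondingly $q=q_{\mathrm c}+q_\infty$. On $\Gamma_\infty$ one has $\mathrm{Re}\,\omega(k)=0$ by \eqref{dpm-def}, so $\tau:=-i\omega(k)=\mathrm{Im}\,\omega(k)$ is real there; using $\omega(k)\sim-i\beta k^3$ one checks that $\tau$ is a piecewise smooth bijection of $\Gamma_\infty$ onto $\{|\tau|>\tau_0\}$, and writing $k=k(\tau)$ the relation $\omega'(k)\,dk=i\,d\tau$ absorbs the weight $\omega'(k)$, recasting the $\Gamma_\infty$-part of \eqref{representation} as $q_\infty(x,t)=\tfrac{1}{2\pi}\int_{|\tau|>\tau_0}e^{ik(\tau)x-i\tau t}\,\widetilde g_0(i\tau,T')\,d\tau$, with $\widetilde g_0(i\tau,T')=\int_0^{T'}e^{i\tau t'}g_0(t')\,dt'$ by \eqref{gj-def}. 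Since $\supp g_0\subset[0,T')$, $\widetilde g_0(i\tau,T')$ is (up to a reflection) the Fourier transform of $g_0$, so $\|\langle\tau\rangle^{(s+1)/3}\widetilde g_0(i\tau,T')\|_{L^2_\tau}\simeq\|g_0\|_{H_t^{(s+1)/3}(0,T')}$; moreover $|e^{ik(\tau)x-i\tau t}|=e^{-\mathrm{Im}\,k(\tau)\,x}\le1$ for $x>0$, $t\in[0,T']$, which is what makes every bound below uniform in $t$.

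The compact part $q_{\mathrm c}$ is elementary. On the base $k=\xi\in[k_-,k_+]$ is real and $\omega(\xi)=i\Omega(\xi)$, $\omega'(\xi)=i\Omega'(\xi)$ with $\Omega(\xi)=-\beta\xi^3+\alpha\xi^2+\delta\xi$, so, extending by zero and using Plancherel in $x$, $\|q_{\mathrm{base}}(\cdot,t)\|_{H^s_x(\mathbb R)}^2\lesssim\int_{k_-}^{k_+}\langle\xi\rangle^{2s}|\Omega'(\xi)|^2\,|\widetilde g_0(i\Omega(\xi),T')|^2\,d\xi$; the weight $\langle\xi\rangle^{2s}$ is bounded on $[k_-,k_+]$, and changing variables to $\tau=\Omega(\xi)$ (monotone there, Jacobian $|\Omega'|^{-1}$) turns the $|\Omega'|^2$ produced by Plancherel into $|\Omega'|^1$, which stays bounded even at the endpoints $k_\pm$ where $\omega'$ vanishes — this is precisely why the $\omega'(k)$ in \eqref{representation} is needed — so $\|q_{\mathrm{base}}(\cdot,t)\|_{H^s_x(\mathbb R_+)}\lesssim\|g_0\|_{L^2(0,T')}\le\|g_0\|_{H_t^{(s+1)/3}(0,T')}$. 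On the remaining (compact) arcs of $\Gamma_{\mathrm c}$, $\mathrm{Im}\,k\ge0$ and $|k|$ stays bounded, and a $TT^*$ argument as in the next paragraph — here with an $L^2$-bounded kernel (Hilbert-type near the corners where $\mathrm{Im}\,k=0$, bounded elsewhere) — gives the same bound, the $\sqrt{T'}$ in \eqref{ribvp-se} entering only through the crude estimate $\|g_0\|_{L^1(0,T')}\le\sqrt{T'}\,\|g_0\|_{L^2(0,T')}$ used on the portions of $\Gamma$ interior to $D^+$ or $\widetilde D^+$ (where $|e^{\omega(k)t'}|\le1$ and $\mathrm{Im}\,k\ge0$, so the cases $\alpha^2+3\beta\delta\le0$ require nothing extra).

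The substance, and the step I expect to be the main obstacle, is $q_\infty$. On each leg, $\omega(k)\sim-i\beta k^3$ forces $\arg k(\tau)\to\tfrac{\pi}{3}$ or $\tfrac{2\pi}{3}$, so, using $\xi:=\mathrm{Re}\,k$ as parameter, $\rho(\xi):=\mathrm{Im}\,k(\xi)=\sqrt{3\xi^2-2\tfrac{\alpha}{\beta}\xi-\tfrac{\delta}{\beta}}\sim\sqrt3\,|\xi|>0$, $|k(\xi)|\sim\langle\xi\rangle$, $|\tau(\xi)|\sim\langle\xi\rangle^3$, and $q_\infty(x,t)=\int e^{i\xi x-\rho(\xi)x}\Phi(\xi)\,d\xi$ with $\Phi$ carrying the Jacobian $d\tau/d\xi$, so that $|\Phi(\xi)|\sim\langle\xi\rangle^2\,|\widetilde g_0(i\tau(\xi),T')|$. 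Since $\partial_x^jq_\infty$ has the same form with $\Phi$ replaced by $(i\xi-\rho(\xi))^j\Phi$ and $|i\xi-\rho(\xi)|\sim\langle\xi\rangle$, it suffices — interpolating for non-integer $s$ — to bound $\big\|\int e^{i\xi x-\rho(\xi)x}\langle\xi\rangle^s\Phi(\xi)\,d\xi\big\|_{L^2_x(\mathbb R_+)}$ for every $s\ge0$. I would do this by $TT^*$: expanding the square, applying Fubini, and computing $\int_0^\infty e^{-(\rho(\xi)+\rho(\xi'))x+i(\xi-\xi')x}\,dx=[(\rho(\xi)+\rho(\xi'))-i(\xi-\xi')]^{-1}$ reduces matters to the $L^2(\mathbb R_+)$-boundedness of the bilinear form with kernel $K(\xi,\xi'):=[(\rho(\xi)+\rho(\xi'))-i(\xi-\xi')]^{-1}$. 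Because $\rho$ is asymptotically linear, $K$ is a bounded perturbation of a kernel homogeneous of degree $-1$ whose pole never meets the diagonal, hence bounded on $L^2(\mathbb R_+)$ by the Schur test with weight $\xi\mapsto\xi^{-1/2}$ (equivalently, by the criterion $\int_0^\infty t^{-1/2}|K(1,t)|\,dt<\infty$); this is the boundary analogue of the dispersive mechanism behind the time estimate \eqref{cauchyextra3}, and it is here that the strict positivity $\rho>0$, i.e. $\mathrm{Im}\,k>0$ along the legs — the whole reason for deforming $\mathbb R$ to $\partial D^+$ — gets used. This yields $\|q_\infty(\cdot,t)\|_{H^s_x(\mathbb R_+)}\lesssim\|\langle\xi\rangle^s\Phi\|_{L^2_\xi}$, and changing variables back to $\tau$ (Jacobian $|d\tau/d\xi|\sim\langle\xi\rangle^2$, $\langle\xi\rangle\sim\langle\tau\rangle^{1/3}$) gives $\|\langle\xi\rangle^s\Phi\|_{L^2_\xi}^2\sim\int\langle\tau\rangle^{2(s+1)/3}|\widetilde g_0(i\tau,T')|^2\,d\tau\simeq\|g_0\|_{H_t^{(s+1)/3}(0,T')}^2$, which is sharp and accounts for the exponent $\tfrac{s+1}{3}$.

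Combining the two pieces and keeping track of the elementary $T'$-factors yields \eqref{ribvp-se} with the stated constant $1+\sqrt{T'}\,e^{cT'}$, uniformly in $t\in[0,T']$ by virtue of $|e^{ik(\tau)x-i\tau t}|\le1$. The genuine difficulty is confined to the $TT^*$/oscillatory-kernel step of the previous paragraph: since the Sobolev exponent $\tfrac{s+1}{3}$ on the boundary datum is the sharp one predicted by the Cauchy-problem estimate \eqref{cauchyextra3} of Theorem~\ref{cauchylemma}, no lossy bound can be afforded, and one must exploit the full dispersion carried by the cubic $\omega$ in tandem with the positivity of $\mathrm{Im}\,k$ on $\Gamma$ — which in turn hinges on the careful contour choices in \eqref{Gamma-def} and Figures~\ref{dplus123-f}--\ref{dplus23mod-f} that secure this positivity while keeping $\nu_-(k)-\nu_+(k)$ away from zero.
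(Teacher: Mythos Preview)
Your proposal is correct and follows the same overall architecture as the paper: split the contour into the unbounded legs and the compact base, identify $\widetilde g_0(i\tau,T')$ with $\widehat g_0$ via the compact support of $g_0$, handle integer $s$ first and interpolate. The differences are in the execution on the legs and the base. On the legs, the paper parametrizes by $m=\mathrm{Im}\,k$ so that the integral becomes a genuine Laplace transform $\int_\lambda^\infty e^{-mx}F(m)\,dm$ and then invokes the $L^2(\mathbb R_+)$-boundedness of the Laplace transform (Lemma~3.2 of \cite{fhm2017}), after which a pointwise supremum bound (Lemma~\ref{suplem}) matches the weights. You instead parametrize by $\xi=\mathrm{Re}\,k$ and run a $TT^\ast$/Schur argument on the kernel $[(\rho(\xi)+\rho(\xi'))-i(\xi-\xi')]^{-1}$; this is equivalent in content, since the Laplace transform bound is itself proved by precisely such a Schur test on a degree $-1$ homogeneous kernel, but the paper's packaging sidesteps the mild corner issue at $\xi=c_\pm$ (where $\rho\to 0$) that your sketch defers to a ``Hilbert-type'' remark. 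On the compact base you change variable to $\tau$ and bound directly by $\|g_0\|_{L^2}$, which is in fact cleaner than the paper's crude Cauchy--Schwarz $|\widetilde g_0|\le\sqrt{T'}\|g_0\|_{L^2}$; conversely, the paper makes explicit that in the cases $\alpha^2+3\beta\delta\le 0$ the base sits strictly inside $D^+$ at height $\lambda>0$ (a lifted segment, not a loop around $\widetilde{\mathcal B}$), so $|e^{-\omega(k)t}|=e^{|\mathrm{Re}\,\omega(k)|t}\le e^{cT'}$ there --- this is the origin of the $e^{cT'}$ in \eqref{ribvp-se}, which you record at the end but do not trace.
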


\begin{proof}
We employ the Fokas method solution formula \eqref{representation}. First, recalling the definition \eqref{dpm-def} of $D^+$ and the various scenarios depending on the sign of $\alpha^2+3\beta\delta$ that are shown in Figures \ref{dplus123-f} and \ref{dplus23mod-f}, we parametrize the integration contour in \eqref{representation} as $\Gamma=(-\gamma_1)\cup\gamma_2\cup\gamma_3$ with
		\begin{alignat}{2}
			\gamma_1(m)&=\frac{\alpha-\sqrt{3\beta^2m^2+\alpha^2+3\beta\delta}}{3\beta}+im,\quad &&\lambda\leq m<\infty,
			\nonumber\\
			\gamma_2(m)&=m+i\lambda, &&c_-< m< c_+,
			\label{gamma-par}\\
			\gamma_3(m)&=\frac{\alpha+\sqrt{3\beta^2m^2+\alpha^2+3\beta\delta}}{3\beta}+im, &&\lambda\leq m<\infty,
			\nonumber
		\end{alignat}
		where, as depicted in Figures \ref{dplus123-f} and \ref{dplus23mod-f},  $c_{\pm}=\frac{\alpha\pm\sqrt{3\beta^2\lambda^2+\alpha^2+3\beta\delta}}{3\beta}$ and $\lambda>0$ is a fixed non-negative real number such that
\begin{equation}\label{cursornumber}
		\left\{
		\begin{array}{ll}
			\lambda=0, &\alpha^2+3\beta\delta>0, \quad \text{(first panel in Figure \ref{dplus123-f})}
			 \\
			\lambda>\frac{2\sqrt{-(\alpha^2+3\beta\delta)}}{3\beta}, &\alpha^2+3\beta\delta\le 0. \quad \text{(Figure \ref{dplus23mod-f})}
		\end{array}
		\right.
\end{equation}
In view of the above parametrization, for any $j\in\mathbb N_0$ we have
\begin{align}
\partial_x^jq(x,t)
&=-\frac{1}{2\pi}\int_{\infty}^{\lambda}(i\gamma_1(m))^je^{i\gamma_1(m)x-(\omega(\gamma_1(m))t} \, \widetilde{g}_0(\omega(\gamma_1(m)),T')\frac{d[i\omega(\gamma_1(m))]}{dm}\; dm
\label{q1-def}
\\
&\quad
-\frac{1}{2\pi}\int_{c_-}^{c_+}(i\gamma_2(m))^je^{i\gamma_2(m)x-(\omega(\gamma_2(m))t} \, \widetilde{g}_0(\omega(\gamma_2(m)),T')\frac{d[i\omega(\gamma_2(m))]}{dm}\; dm
\label{q2-def}
\\
&\quad
-\frac{1}{2\pi}\int_{\lambda}^{\infty}(i\gamma_3(m))^je^{i\gamma_3(m)x-(\omega(\gamma_3(m))t} \, \widetilde{g}_0(\omega(\gamma_3(m)),T')\frac{d[i\omega(\gamma_3(m))]}{dm}\; dm
\label{q3-def}
\\
&=: q_1(x,t) + q_2(x,t) + q_3(x,t).
\nonumber
\end{align}

As the terms $q_1$ and $q_3$ are analogous, they can be handled  in a similar fashion and hence we only provide the details for the estimation of $q_1$ given by \eqref{q1-def}. Since
\begin{align}\nonumber
			\|q_1(\cdot,t)\|_{L_x^2(\mathbb{R}_+)}^{2}&=\frac{1}{4\pi^2}\int_{0}^{\infty}\bigg| \int_{\lambda}^{\infty}(i\gamma_1(m))^je^{i\gamma_1(m)x-(\omega(\gamma_1(m))t} \, \widetilde{g}_0(\omega(\gamma_1(m)),T')\frac{d[i\omega(\gamma_1(m))]}{dm} \, dm \bigg|^2 \,dx\\\nonumber
			&\lesssim\int_{0}^{\infty}\left( \int_{0}^{\infty}e^{-mx}|\gamma_1(m)|^j\left|\widetilde{g}_0(\omega(\gamma_1(m)),T')\right|\left|\frac{d[i\omega(\gamma_1(m))]}{dm} \right|\, \chi_{(\lambda,\infty)}(m)dm \right)^2 \,dx,
\end{align}
by the boundedness of the Laplace transform in $L^2(\mathbb R_+)$ (e.g. see Lemma 3.2 in \cite{fhm2017}) we have
\begin{equation}\label{lowneed}
\|q_1(\cdot,t)\|_{L_x^2(\mathbb{R}_+)}^{2}
\lesssim
\int_{\lambda}^{\infty}|\gamma_1(m)|^{2j}|\widetilde{g}_0(\omega(\gamma_1(m)),T')|^2\left|\frac{d[i\omega(\gamma_1(m))]}{dm} \right|^2dm.
\end{equation}

Let $\tau(m)=i\omega(\gamma_1(m)),\, m\in[\lambda,\infty)$. Note that $\tau(m) \in \mathbb R$ since $\gamma_1(m)\in \partial D^+$ and $\text{Re}(\omega(k))=0$ for $k\in \partial D^+$ and, more precisely, $\text{Range}(\tau)= [i\omega(c_-),\infty)$. Furthermore, since $\tau'(m)\neq 0$ on $(\lambda,\infty)$ and $\tau\rightarrow \infty$ as $m\rightarrow \infty$, it follows that $\tau:[\lambda,\infty)\rightarrow [i\omega(c_-),\infty)$ is monotone increasing and so $\tau'(m) > 0$.  Then, \eqref{lowneed} becomes
\begin{equation}\label{lowneed2}
\begin{aligned}
\|q_1(\cdot,t)\|_{L_x^2(\mathbb{R}_+)}^{2}
&\lesssim
\int_{\lambda}^{\infty}|\gamma_1(m)|^{2j}|\widetilde{g}_0(-i\tau(m),T')|^2 \left[\tau'(m)\right]^2dm
\\
&=
\int_{\lambda}^{\infty}|\gamma_1(m)|^{2j}|\widehat{g}_0(\tau(m))|^2 \left[\tau'(m)\right]^2dm
\end{aligned}
\end{equation}
after observing that the time transform \eqref{gj-def} of $g_0$ at $T'$ is in fact the Fourier transform of $g_0$ thanks to the fact that $g_0$ has compact support inside $(0,T')$, namely
\begin{equation}\label{ft-comp}
\widetilde{g}_0(-i\tau(m),T')=\widehat{g}_0(\tau(m)).
\end{equation} 
Next, we have the following auxiliary result.
\begin{lem}\label{suplem}
There is a constant $c>0$ depending only on $\alpha,\beta,\delta$ such that
$$
\sup_{m\in [\lambda,\infty)}\frac{|\gamma_1(m)|^{2j} \tau'(m)}{[1+\tau^2(m)]^{\frac{j+1}{3}}}\le c<\infty.
$$
\end{lem}

We prove 	Lemma \ref{suplem} after the end of the current proof. Employing it in combination with \eqref{lowneed2}, we obtain
\begin{equation}
\begin{aligned}
\|q_1(\cdot,t)\|_{L_x^2(\mathbb{R}_+)}^{2}
&\lesssim
\int_{\lambda}^{\infty}[1+\tau^2(m)]^{\frac{j+1}{3}}|\widehat{g}_0(\tau(m))|^2 \tau'(m) dm
\\
&=
\int_{i\omega(c_-)}^\infty(1+\tau^2)^{\frac{j+1}{3}}|\widehat{g}_0(\tau)|^2d\tau = \|g_0\|_{H_t^{\frac{j+1}{3}}(\mathbb{R})}^2
\end{aligned}
\end{equation}
uniformly for $t \in [0, T']$, completing the estimation of $q_1$. 

\vskip 3mm

We proceed to the estimation of $q_2$ given by \eqref{q2-def}. 
\\[2mm]
\textit{Case 1: $\alpha^2+3\beta\delta>0$}. Then, $\lambda=0$ and by the definition of $\gamma_2$ we can rewrite $q_2$ as
		\begin{equation}\label{q2rewrite}
			q_2(x,t)=-\frac{i}{2\pi}\int_{c_-}^{c_+} (im)^j e^{imx-\omega(m)t} \, \widetilde{g}_0(\omega(m),T') \, \omega'(m) dm.
		\end{equation}
so that $q_2(\cdot,t)$ can be regarded as the inverse spatial Fourier transform of the function
		\begin{equation}\label{Q2mtdef}
		Q_2(m,t)=\left\{\def\arraystretch{1.5}\begin{array}{ll}
				0, &m\notin (c_-,c_+),\\
				-i (im)^j e^{-\omega(m)t} \, \widetilde{g}_0(\omega(m),T')\, \omega'(m), &m\in (c_-,c_+).
		\end{array}\right.
		\end{equation}
Note that $|e^{\omega(m)\rho}|=1$ for $m\in (c_-,c_+)$, $\rho\in \mathbb{R}$. Hence, using the definition of the $t$-transform \eqref{gj-def} and the Cauchy-Schwarz inequality, we have 
\begin{equation}\label{g0sqrtT}
|\widetilde{g}_0(\omega(m),T')|\le \sqrt{T'} \, \|g_0\|_{L_t^2(0,T')}.
\end{equation}  
implying via Plancherel's theorem that
\begin{equation}
\begin{aligned}
\|q_2(\cdot,t)\|_{L_x^2(\mathbb{R}_+)}^2&=\int_{-\infty}^{\infty}|Q_2(m,t)|^2dm
\leq T'\|g_0\|_{L_t^2(0,T')}^2\int_{c_-}^{c_+}|m|^{2j}|\omega'(m)|^2dm
\\
&=cT'\|g_0\|_{L_t^2(0,T')}^2
\lesssim
T' \|g_0\|_{H_t^{\frac{j+1}{3}}(0,T')}^2
\end{aligned}
\end{equation}
with the various constants depending on $\alpha,\beta,\delta,$ and $j$.
\\[2mm]
\textit{Case 2: $\alpha^2+3\beta\delta\leq 0$}. 
Then,  $\lambda>\frac{2\sqrt{-(\alpha^2+3\beta\delta)}}{3\beta}>0$ and, by the definition of $\gamma_2$, 
\begin{equation}\label{q2midest}
			|q_2(x,t)|\le \frac{1}{2\pi}e^{-\lambda x}\int_{c_-}^{c_+}|m^2+\lambda^2|^{\frac{j}{2}}\left|e^{-(\omega(m+i\lambda)t} \, \widetilde{g}_0(\omega(m+i\lambda),T') \, \omega'(m+i\lambda)\right| dm.
\end{equation}
		Recall that for $k\in D^+$, we have Re$(\omega(k))<0$, which implies  $|e^{\omega(m+i\lambda)\rho}|\le 1$ for $m\in (c_-,c_+)$, $\rho\in [0,T']$. Therefore, similarly to \eqref{g0sqrtT}, 
		\begin{equation}\label{q2auxest}|\widetilde{g}_0(\omega(m+i\lambda),T')|\le \sqrt{T'} \, \|g_0\|_{L_t^2(0,T')}.\end{equation}
Combining \eqref{q2midest} and \eqref{q2auxest}, we deduce
$$
|q_2(x,t)|\le c \sqrt{T'}e^{cT'}\|g_0\|_{L_t^2(0,T')}e^{-\lambda x}.
$$ 
Taking the square of the above inequality, integrating with respect to $x \in (0,\infty)$ (for this step, recall that $\lambda>0$), and then taking square roots, we obtain
		$$\|q_2(\cdot,t)\|_{L^2_x(\mathbb{R}_+)}\le \frac{c}{\sqrt{2\lambda}} \sqrt{T'}e^{cT'}\|g_0\|_{L_t^2(0,T')}\lesssim  \sqrt{T'}e^{cT'}\|g_0\|_{H_t^{\frac{j+1}{3}}(0,T')},$$ where the constant of the last inequality depends only on $\alpha,\beta,\delta$ and $j$.
		
The desired estimate \eqref{ribvp-se} has been established for $s\in \mathbb{N}_0$. The proof for $s\ge 0$ follows by interpolation, e.g. see Theorem 5.1 in  \cite{lm1972}. Make a remark about the possibility of using the fractional norm along the lines of the nonhomogeneous Cauchy problem and \cite{fhm2017}.
\end{proof}

\begin{proof}[Proof of Lemma \ref{suplem}]
First, we make a few observations. From the definition \eqref{wkdef} of $\omega$ and the triangle inequality,  
$$
|\omega(k)| 
\geq 
\beta |k|^3 - | \alpha k^2+\delta k|
\geq
\beta |k|^3 - \left( |\alpha| |k|^2 + |\delta| |k| \right).
$$
%
%
In addition, for $|k|\ge \frac{|\alpha|+\sqrt{\alpha^2+2\beta |\delta|}}{\beta}$ we have $|\alpha| |k|^2+ |\delta| |k|\le\frac{1}{2} \beta |k|^3$ and so, noting also that $\text{Re}(\omega(k)) = 0$ along $\partial D^+$,  
%
\begin{equation}\label{iwkest}
|i\omega(k)|\ge \frac{\beta}{2}|k|^3
\
\Rightarrow 
\ 
\frac{1}{1+[i\omega(k)]^2}
\leq \frac{1}{1+\frac{\beta^2}{4} |k|^6}
\simeq \frac{1}{1+|k|^6}.
\end{equation}
Observe further that $|\gamma_1(m)|\ge m$ thus $|\gamma_1(m)|$ can be made as large as we wish by taking $m\in [\lambda,\infty)$ large enough. Therefore, using \eqref{iwkest}, for large enough $m$ we  have   
			$$\frac{1}{1+\tau^2(m)}=\frac{1}{1+[i\omega(\gamma_1(m)]^2}\lesssim \frac{1}{1+|\gamma_1(m)|^6}.$$
			On the other hand, for $|k|\geq1$ we have $|k|^2\geq |k|$ and so by the triangle inequality
			$$|\omega'(k)|\le 3\beta|k|^2+2|\alpha\|k|+|\delta|\le (3\beta+2|\alpha|+|\delta|)(1+|k|^2).$$
			From the definition of $\gamma_1$, there exist non-negative constants $c_1, c_2$ depending on $\alpha,\beta,\delta$ such that
			$$|\gamma_1(m)|\le c_1m \text{ and } |\gamma_1'(m)|\le c_2, \quad m\in [\lambda, \infty).$$ Hence, there are some constants $c_3>0$, $M\ge \lambda$ depending on $\alpha,\beta,\delta$ such that 
$$\frac{|\gamma_1(m)|^{2j}|\left|\omega'(\gamma_1(m)) \gamma_1'(m)\right|}{[1+\tau^2(m)]^{\frac{j+1}{3}}}\le c_3\frac{(|\gamma_1(m)|^2)^{j}(1+|\gamma_1(m)|^2)}{(1+|\gamma_1(m)|^2)^{j+1}}\le c_3, 
\quad m>M.
$$ 
However, by continuity of the function on the left-hand side on the compact interval $[\lambda,M]$, there is also some constant $c_4>0$ depending on $\alpha,\beta,\delta$ such that	
$$
\frac{|\gamma_1(m)|^{2j}|\left|\omega'(\gamma_1(m)) \gamma_1'(m)\right|}{[1+\tau^2(m)]^{\frac{j+1}{3}}}\le c_4, \quad m\in [\lambda,M].
$$  
Combining the last two inequalities yields the desired estimate with $c = \max\{c_3,c_4\}<\infty$.
\end{proof}

\subsubsection*{Strichartz-type estimates}
It turns out convenient to reparametrize the contour of integration in the solution formula \eqref{representation} of the reduced initial-boundary value problem \eqref{linear4} as  
$\Gamma=\Gamma_1\cup\Gamma_2\cup\Gamma_3$ with
\begin{alignat}{2}
		\Gamma_1(m)&=m+i\sqrt{3\Big(m-\frac{\alpha}{3\beta}\Big)^2-\frac{\alpha^2+3\beta\delta}{3\beta^2}},\quad &&-\infty < m\leq c_-,
		\nonumber\\
		\Gamma_2(m)&=m+i\lambda, &&c_-< m< c_+,
		\label{Gamma-par}
		\\
		\Gamma_3(m)&=m+i\sqrt{3\Big(m-\frac{\alpha}{3\beta}\Big)^2-\frac{\alpha^2+3\beta\delta}{3\beta^2}}, &&c_+\leq m<\infty,
		\nonumber
	\end{alignat} 
where, as before, $c_{\pm}=\frac{\alpha\pm\sqrt{3\beta^2\lambda^2+\alpha^2+3\beta\delta}}{3\beta}$ and $\lambda>0$ satisfies \eqref{cursornumber}.  With this parametrization, formula \eqref{representation} can be expressed as the sum
$$
		q(x,t)=-\frac{i}{2\pi}\sum_{j=1}^{3}\int_{\Gamma_j}e^{ikx-\omega(k)t} \, \widetilde{g}_0(\omega(k),T')\omega'(k)dk
		=: \sum_{j=1}^3 q_j(x,t).
$$

We first consider $q_1$, which after recalling also \eqref{ft-comp} takes the form
\begin{equation}\label{q1-form}
\begin{aligned}
		q_1(x,t)&=-\frac{i}{2\pi}\int_{-\infty}^{c_-}e^{i\Gamma_1(m)x-\omega(\Gamma_1(m))t} \, \widehat{g}_0(i\omega(\Gamma_1(m)))\, \omega'(\Gamma_1(m))\Gamma'_1(m)dm\\
		&=\frac{1}{2\pi}\int_{-\infty}^{c_-}e^{i\Gamma_1(m)x-\omega(\Gamma_1(m))t}\bigg(\int_{-\infty}^{\infty} e^{-imy}\Psi_1(y)dy \bigg)dm
\end{aligned}
\end{equation} 
where $\Psi_1$ is the inverse Fourier transform of 
$$
		\widehat{\Psi}_1(m):=\left\{
		\begin{array}{ll}
			-i\widehat{g}_0(i\omega(\Gamma_1(m)))\, \omega'(\Gamma_1(m))\Gamma'_1(m), & m\leq c_-, \\
			0, & m> c_-.
		\end{array}
		\right.
$$
Then, introducing the kernel 
	\begin{equation}\label{kernel}
		\mathcal{K}(y;x,t)=\int_{-\infty}^{c_-}e^{i\phi(m;x,y,t)}p(m;x)dm
	\end{equation}
with amplitude 
	\begin{equation}
		p(m;x)=e^{-x\sqrt{3\big(m-\frac{\alpha}{3\beta}\big)^2-\frac{\alpha^2+3\beta\delta}{3\beta^2}}}
	\end{equation}
and phase
\begin{equation}\label{phase-def}
\begin{aligned}
\phi(m;x,y,t)
&=m(x-y)+i\omega(\Gamma_1(m))t
\\
&=m(x-y)+t\left[-8\beta m^3+8\alpha m^2+2\left(\delta-\frac{\alpha^2}{\beta}\right)m-\frac{\alpha\delta}{\beta}\right],
\end{aligned}
\end{equation}
we can rearrange \eqref{q1-form} in the form 
\begin{equation}\label{q1-form2}
q_1(x,t)=[K_1(t)\Psi_1](x):=\frac{1}{2\pi}\int_{-\infty}^{\infty}\mathcal{K}(y;x,t)\Psi_1(y)dy.
\end{equation}
This writing provides the starting point for proving the following central estimate of Strichartz type.

%
\begin{thm}\label{BdrStrThm}
Let $s\ge 0$ and $(\mu,r)$ be higher-order Schr\"odinger admissible in the sense of \eqref{admissiblepair}. Then,
\begin{equation}\label{stric8}
\|q\|_{L_t^{\mu}((0,T');H_x^{s,r}(\mathbb{R}_+))} 
\lesssim 
\big(1+(T')^{\frac{1}{\mu}+\frac{1}{2}}\big) \|g_0\|_{H_t^{\frac{s+1}{3}}(0,T')}
\end{equation} 
where $H_x^{s,r}(\mathbb{R}_+)$ is the restriction on $\mathbb{R}_+$ of the Bessel potential space $H_x^{s,r}(\mathbb{R})$ defined by \eqref{bessel-def} and the inequality constant depends only $r, s$.
\end{thm}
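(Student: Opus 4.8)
\textbf{Proof proposal for Theorem \ref{BdrStrThm}.}

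The plan is to reduce the Strichartz estimate for $q$ to three separate estimates for the pieces $q_1, q_2, q_3$ determined by the contour decomposition $\Gamma = \Gamma_1 \cup \Gamma_2 \cup \Gamma_3$, then treat each piece according to its structure, with the bulk of the work residing in the unbounded pieces $q_1$ and $q_3$. Since $q_1$ and $q_3$ are handled symmetrically, I would give full details only for $q_1$. By the representation \eqref{q1-form2}, $q_1(\cdot, t) = K_1(t)\Psi_1$ where the kernel $\mathcal K(y; x, t)$ carries an oscillatory integral in $m$ with cubic-in-$m$ phase \eqref{phase-def} and a decaying amplitude $p(m; x)$ in the spatial variable. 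The key is an oscillatory integral (stationary phase / van der Corput) bound on $\mathcal K$: since the phase $\phi$ has a leading cubic term $-8\beta t m^3$, one expects a dispersive decay of the form $|\mathcal K(y; x, t)| \lesssim |\beta t|^{-1/3}$, uniformly in $x, y$, mirroring the dispersive estimate from \cite{Car2003} used in Theorem \ref{homStr}. The amplitude factor $p(m;x) = e^{-x\sqrt{3(m-\alpha/3\beta)^2 - (\alpha^2+3\beta\delta)/3\beta^2}}$ is bounded by $1$ (for $m \leq c_-$ the radicand is non-negative) and actually provides additional decay in $x$, which is harmless; the contribution of the lower-order terms in $\phi$ and the factor $\omega'(\Gamma_1(m))\Gamma_1'(m)$ (absorbed in $\widehat\Psi_1$) must be tracked through the derivative count in the van der Corput lemma.

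Given such a dispersive bound on $\mathcal K$, I would run the standard $TT^*$ / Hardy--Littlewood--Sobolev machinery exactly as in the proof of Theorem 4.1 in \cite{Car2003} to convert it into a space-time estimate: the operator $K_1(t)$ maps $\Psi_1 \in L^2_y$ into $L^\mu_t((0,T'); L^r_x)$ for any higher-order Schr\"odinger admissible pair $(\mu, r)$ in the sense of \eqref{admissiblepair}, with norm controlled by $\|\Psi_1\|_{L^2(\mathbb R)}$ up to a factor $(1 + (T')^{1/\mu + 1/2})$ coming from the truncation to the finite time interval and the treatment of the low-frequency portion (the $(T')^{1/\mu+1/2}$ power should emerge from a H\"older estimate in $t$ on the part of the kernel near $t=0$ where the dispersive decay is not integrable, analogous to the $1+T^{1/2}$ factors appearing in Theorems \ref{cauchylemma} and \ref{Nonhomthm}). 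It then remains to relate $\|\Psi_1\|_{L^2(\mathbb R)}$ to $\|g_0\|_{H_t^{(s+1)/3}(0, T')}$: by Plancherel, $\|\Psi_1\|_{L^2}^2 = \frac{1}{2\pi}\int_{-\infty}^{c_-} |\widehat g_0(i\omega(\Gamma_1(m)))|^2 |\omega'(\Gamma_1(m))|^2 |\Gamma_1'(m)|^2 dm$, and after the change of variables $\tau = i\omega(\Gamma_1(m))$ (which is real-valued and monotone on $(-\infty, c_-]$ since $\Gamma_1 \subset \partial D^+$, exactly as in the proof of Theorem \ref{ibvpthem01}) this becomes $\int |\widehat g_0(\tau)|^2 |\omega'(\Gamma_1)| \, d\tau$; a Lemma-\ref{suplem}-type bound $|\omega'(\Gamma_1(m))| \lesssim (1+\tau^2(m))^{?}$ is where the Sobolev exponent $\frac{s+1}{3}$ enters — for the full $H^{s,r}$ (rather than $L^r$) estimate one first replaces $g_0$ by $\langle D_t\rangle^? g_0$ or equivalently inserts the weight $(1+\tau^2)^{s/2}$ and checks it is absorbed by the same kernel bounds, so that one lands on $\|g_0\|_{H_t^{(s+1)/3}(0,T')}$ after accounting for the Bessel weight in $x$ via the definition \eqref{bessel-def}.

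For the middle piece $q_2$, the contour $\Gamma_2(m) = m + i\lambda$ with $c_- < m < c_+$ is a \emph{bounded} segment, so no dispersive decay is needed: one simply bounds $|q_2(x,t)| \lesssim e^{-\lambda x}\int_{c_-}^{c_+} |m^2 + \lambda^2|^{?}|\widetilde g_0(\omega(m+i\lambda), T')||\omega'(m+i\lambda)|\, dm$ as in \eqref{q2midest}, uses $|\widetilde g_0| \leq \sqrt{T'}\|g_0\|_{L^2_t(0,T')}$ from Cauchy--Schwarz (cf. \eqref{g0sqrtT}--\eqref{q2auxest}), and then takes $L^r_x$ and $L^\mu_t$ norms; the exponential $e^{-\lambda x}$ is in every $L^r_x(\mathbb R_+)$ and the $t$-integral over the finite interval $(0,T')$ of a bounded quantity contributes a power of $T'$, giving a bound of the desired form. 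I expect the genuine obstacle to be establishing the dispersive estimate for the kernel $\mathcal K(y; x, t)$ with the \emph{exact} uniform $|t|^{-1/3}$ rate despite the presence of the amplitude $p(m;x)$ and the unboundedness of the contour $\Gamma_1$ near $m \to -\infty$: one must verify that the cubic phase genuinely dominates (no degeneracy of $\phi_{mm}$ and $\phi_{mmm}$ simultaneously on the relevant set — here $\phi_{mmm} = -48\beta t \neq 0$ uniformly, which is exactly the structural feature that rescues the higher-order operator), and that the $x$-dependent amplitude, being a decaying exponential of a function whose derivative in $m$ grows linearly, does not spoil the van der Corput estimate — this should follow by the standard version of van der Corput with an amplitude of bounded variation, after checking $\int |\partial_m p(m;x)|\, dm \lesssim 1$ uniformly in $x$, but it requires care near $m = c_-$ where the square root in $p$ has a square-root-type singularity in its derivative.
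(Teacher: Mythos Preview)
Your overall architecture matches the paper's: the same contour decomposition $q=q_1+q_2+q_3$, a van der Corput dispersive estimate on the kernel $\mathcal K$ (using $\phi_{mmm}=-48\beta t\neq 0$), a $TT^*$/Hardy--Littlewood--Sobolev argument to upgrade to $L^\mu_t L^r_x$, and a direct bound on the finite segment $q_2$. Two points, however, deserve correction.

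\textbf{The $L^2$ endpoint is not free.} In the Cauchy-problem $TT^*$ machinery you invoke from \cite{Car2003}, the $L^2_x$ endpoint of the interpolation is given by unitarity of the free propagator. Here the operator $K_1(t)$ is \emph{not} unitary, and the paper needs a separate argument for the $L^2$ bound
\[
\Big\|\int_0^\infty \overline{\eta(x',t')}\,K_3(x,x';t,t')\,dx'\Big\|_{L^2_x(\mathbb R_+)}\lesssim \|\eta(t')\|_{L^2_x(\mathbb R_+)}.
\]
This is obtained via a generalized $L^2$-boundedness of the Laplace transform (Lemma~\ref{modLTbound}), applied twice after writing $K_3$ as a double Laplace-type integral against $e^{-xs(m)}e^{-x's(m)}$. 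Without this ingredient there is nothing to interpolate the dispersive $L^\infty$ bound against, so your appeal to ``standard $TT^*$'' leaves a genuine gap.

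\textbf{The $(T')^{1/\mu+1/2}$ factor comes from $q_2$, not from $q_1$.} Your explanation that it arises from ``a H\"older estimate in $t$ on the part of the kernel near $t=0$ where the dispersive decay is not integrable'' is incorrect: Hardy--Littlewood--Sobolev handles the non-integrable $|t-t'|^{-1/3}$ singularity without any cutoff, and the paper's bound for $q_1$ carries \emph{no} power of $T'$. The factor appears solely in the $q_2$ estimate, where Cauchy--Schwarz in the $t$-transform gives $\sqrt{T'}$ and the $L^\mu_t$ integration over $(0,T')$ of a $t$-independent bound gives the additional $(T')^{1/\mu}$.

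Two smaller remarks. Your worry about the amplitude near $m=c_-$ is resolved exactly as you suspect: the paper integrates by parts against $\partial_m p(m;z)>0$ and uses $\int_{-\infty}^{c_-}\partial_m p\,dm=e^{-\lambda z}\le 1$, so only the total variation (not pointwise control of $\partial_m p$) is needed. And your route to $H^{s,r}_x$ via ``inserting a Bessel weight'' is workable but the paper instead differentiates $j$ times in $x$, re-runs the argument to get $W^{j,r}_x$ bounds, invokes Calder\'on's identification $W^{j,r}=H^{j,r}$ for $j\in\mathbb N_0$, and interpolates; this is slightly cleaner since the kernel structure is preserved under $\partial_x^j$.
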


	\begin{proof} We will use a standard duality argument. Let $\eta\in C_c([0,T'];\mathcal{D}(\mathbb{R}_+))$ be an arbitrary function. Then,
\begin{equation}\label{dualityarg}
\begin{aligned}
2\pi \, \bigg|\int_{0}^{T'}
\left\langle K_1(t)\Psi_1,\eta(\cdot,t) \right\rangle_{L_x^2(\mathbb{R}_+)}dt\bigg| 
&=
\left| \int_{0}^{T'} 
\int_{0}^{\infty}\left(\int_{-\infty}^{\infty} \mathcal{K}(y;x,t)\Psi_1(y)dy\right) \overline{\eta(x,t)}dx dt\right|
\\
&= 
\int_{-\infty}^{\infty}\Psi_1(y)\;\overline{\int_{0}^{T'} \int_{0}^{\infty}\overline{\mathcal{K}(y;x,t)}\;\eta(x,t)dxdt} \, dy
\\
&\leq 
\|\Psi_1\|_{L^2(\mathbb{R})}\left\|  \int_{0}^{T'}\int_{0}^{\infty}\overline{\mathcal{K}(y;x,t)}\;\eta(x,t)dxdt \right\|_{L_y^2(\mathbb{R})}.
\end{aligned}
\end{equation}
Set $K_2(y):=\displaystyle \int_0^{T'}\int_0^{\infty} \overline{\mathcal{K}(y;x,t)}\eta(x,t)dxdt$. By the definition of the $L^2$-norm, we have
\begin{align*}
\| K_2 \|_{L^2(\mathbb{R})}^2
&=
\int_{-\infty}^{\infty} \bigg( \int_0^{T'}\int_0^{T'}\int_0^{\infty}\int_0^{\infty}\overline{\mathcal{K}(y;x,t)}\eta(x,t)\mathcal{K}(y;x',t')\overline{\eta(x',t')}dxdx'dtdt' \bigg)dy
\\
&=
\int_0^{T'}\int_0^{T'}\int_0^{\infty}\int_0^{\infty}\eta(x,t)\overline{\eta(x',t')}K_3(x,x';t,t')dxdx'dtdt'
\\
&=
\int_0^{T'}\int_0^{\infty}\eta(x,t)\bigg(\int_0^{T'}\int_0^{\infty}\overline{\eta(x',t')}K_3(x,x';t,t')dx'dt'\bigg)dxdt
\end{align*}
where $K_3(x,x';t,t'):=\displaystyle \int_{-\infty}^{\infty} \overline{\mathcal{K}(y;x,t)}\mathcal{K}(y;x',t')dy$.
Then, by Hölder's inequality in $(x,t)$ and then Minkowski's integral inequality between $x$ and $t'$ we deduce
\begin{equation}\label{k2midest}
\begin{aligned}
\| K_2 \|_{L^2(\mathbb{R})}^2
&\leq
\|\eta\|_{L_t^{\mu'}((0,T');L_x^{r'}(\mathbb{R}_+))}\left\| \int_0^{T'}\int_0^{\infty}\overline{\eta(x',t')}K_3(x,x';t,t')dx'dt'\right\|_{L_t^{\mu}((0,T');L_x^{r}(\mathbb{R}_+))}
\\
&\leq
\|\eta\|_{L_t^{\mu'}((0,T');L_x^{r'}(\mathbb{R}_+))}
\left\| \int_0^{T'}
\left\|
\int_0^{\infty}\overline{\eta(x',t')}K_3(x,x';t,t')dx'
\right\|_{L_x^{r}(\mathbb{R}_+)}
dt'\right\|_{L_t^{\mu}(0,T')}.
\end{aligned}
\end{equation}

We begin with the estimation of the interior $L_x^r(\mathbb R_+)$-norm.
Using the definition \eqref{kernel} of $\mathcal K$, we rewrite $K_3$ in the form of an oscillatory integral:
\begin{equation*}
\begin{aligned}
K_3(x,x';t,t')&=\int_{-\infty}^{\infty}\bigg( \int_{-\infty}^{c_-}e^{-i\phi(m;x,y,t)}p(m;x)dm \bigg) \bigg( \int_{-\infty}^{c_-}e^{i\phi(m';x',y,t')}p(m';x')dm' \bigg)dy
\\
&=\int_{-\infty}^{c_-}p(m;x) \int_{-\infty}^{\infty}e^{-i\phi(m;x,y,t)}\left(\int_{-\infty}^{c_-}e^{i\phi(m';x',y,t')}p(m';x')dm'\right)dy dm.
\end{aligned}
\end{equation*}
Recalling the definition \eqref{phase-def} of the phase function $\phi$ and introducing the function
\begin{equation*}
Q(m';x',t'):=
\left\{
\begin{array}{ll}
e^{im'x'-\omega(\Gamma_1(m'))t'}p(m';x'), &m'\in (-\infty, c_-],
\\
0, &m'\in (c_-,\infty),
\end{array}
\right.
\end{equation*}
we have via the Fourier inversion theorem
\begin{align*}
&\quad
\int_{-\infty}^{\infty}e^{-i\phi(m;x,y,t)}\left(\int_{-\infty}^{c_-}e^{i\phi(m';x',y,t')}p(m';x')dm'\right)dy
\\
&=
2\pi e^{-imx+\omega(\Gamma_1(m))t} \cdot \frac{1}{2\pi}\int_{-\infty}^{\infty}e^{imy}\left(\int_{-\infty}^{\infty}e^{-im'y}Q(m';x',t')dm'\right)dy
\\
&=2\pi e^{-imx+\omega(\Gamma_1(m))t} \, Q(m;x',t').
\end{align*}
Thus, for $m\in (-\infty,c_-]$ we deduce
$$
\int_{-\infty}^{\infty}e^{-i\phi(m;x,y,t)}\left(\int_{-\infty}^{c_-}e^{i\phi(m';x',y,t')}p(m';x')dm'\right)dy
=2\pi e^{-i\phi(m;x,x',t-t')}p(m;x')
$$
and, consequently,
\begin{equation*}
K_3(x,x';t,t')=2\pi\int_{-\infty}^{c_-}e^{-i\phi(m;x,x',t-t')}p(m;x+x')dm.
\end{equation*}

Next, we employ the following fundamental result.
\begin{lem}\label{vander2}
Let $K(x,y,z,t)=\displaystyle \int_{-\infty}^{c_-}e^{i\phi(m;x,y,t)}p(m;z)dm,$ where $x,z\in\mathbb{R}_+$ and $y, t \in \mathbb{R}$.  Then, 
		\begin{equation}
			|K(x,y,z,t)|\lesssim |t|^{-\frac 13}, \quad t\neq 0,
		\end{equation}
		where the constant of the inequality is independent of $x,y,z,t$.
\end{lem}
The proof of Lemma \ref{vander2} relies on the classical van der Corput lemma and is provided after the end of the current proof.
Observe that Lemma \ref{vander2} with $y, z, t$ replaced respectively by $x', x+x', t-t'$ yields 
$$
|K_3(x,x',t,t')|\lesssim |t-t'|^{-\frac 13}, \quad t\neq t',
$$ 
with inequality constant independent of $x$, $x'$, $t$ and $t'$.
This dispersive estimate implies 
		\begin{equation}\label{stric1}
			\left\|\int_0^{\infty}\overline{\eta(x',t')}K_3(x,x';t,t')dx'\right\|_{L_x^{\infty}(\mathbb{R}_+)}\lesssim |t-t'|^{-\frac 13}\|\eta(t')\|_{L_x^1(\mathbb{R}_+)}.
		\end{equation}
		On the other hand, we also have
		\begin{equation}\label{stric2}
			\left\|\int_0^{\infty}\overline{\eta(x',t')}K_3(x,x';t,t')dx'\right\|_{L_x^{2}(\mathbb{R}_+)}\lesssim \|\eta(t')\|_{L_x^2(\mathbb{R}_+)}.
		\end{equation}
Indeed, we have
\begin{align}
&\quad
\left\|\int_0^{\infty}\overline{\eta(x',t')}K_3(x,x';t,t')dx'\right\|_{L_x^{2}(\mathbb{R}_+)}^2
=\int_{0}^{\infty}\bigg| \int_{0}^{\infty}\overline{\eta(x',t')}K_3(x,x';t,t')dx' \bigg|^2 \,dx
\nonumber\\
&= (2\pi)^2\int_{0}^{\infty}\bigg| \int_{0}^{\infty}\overline{\eta(x',t')}\left(\int_{-\infty}^{c_-}e^{-i\phi(m;x,x',t-t')}p(m;x+x')dm\right)dx' \bigg|^2dx
\nonumber\\
&\le 
(2\pi)^2\int_{0}^{\infty}\left(\int_{-\infty}^{c_-}e^{-xs(m)}\left(\int_{0}^{\infty}e^{-x's(m)}|\eta(x',t')|dx'\right)dm\right)^2dx,
\nonumber
\end{align} where
$s(m)=\sqrt{3\big(m-\frac{\alpha}{3\beta}\big)^2-\frac{\alpha^2+3\beta\delta}{3\beta^2}}$.
The claimed estimate \eqref{stric2} then directly follows  by invoking the following lemma, which provides a generalization of the $L^2(\mathbb R_+)$-boundedness of the Laplace transform given in \cite{fhm2017} and is established after the end of the current proof.
\begin{lem}\label{modLTbound}The estimates in $(i)$ and $(ii)$ below hold true for $f\in L_m^2(-\infty,c_-)$ and $f\in L_x^2(\mathds{R}_+)$, respectively.
\begin{align*}
&\textnormal{(i)}\quad \left\|\int_{-\infty}^{c_-}e^{-xs(m)}f(m)dm\right\|_{L_x^2(\mathbb{R}_+)}\lesssim \|f\|_{L_m^2(-\infty,c_-)},\\
&\textnormal{(ii)} \quad \left\|\int_{0}^{\infty}e^{-xs(m)}f(x)dx\right\|_{L_m^2(-\infty,c_-)}\lesssim \|f\|_{L_x^2(\mathbb{R}_+)}.
\end{align*}
\end{lem}

Now, (\ref{stric1}) and (\ref{stric2}) together with Riesz-Thorin interpolation theorem yield for any $r\ge 2$ that
		\begin{equation}\label{stric3}
			\left\|\int_0^{\infty}\overline{\eta(x',t')}K_3(x,x';t,t')dx'\right\|_{L_x^{r}(\mathbb{R}_+)}\lesssim |t-t'|^{-\frac2\mu}\|\eta(t')\|_{L_x^{r'}(\mathbb{R}_+)},
		\end{equation}
where  $\frac{1}{r'}=1-\frac{1}{r}$ and we have also used \eqref{admissiblepair}. Hence,  for any $\eta\in L_t^{\mu'}((0,T');L_x^{r'}(\mathbb{R}_+))$  we obtain
\begin{equation*}
 \int_0^{T'} \left\|\int_0^{\infty}\overline{\eta(x',t')}K_3(x,x';t,t')dx'\right\|_{L_x^r(\mathbb{R})} dt'
\lesssim 
\int_0^{T'}|t-t'|^{-\frac{2}{\mu}}\|\eta(t')\|_{L_x^{r'}(\mathbb{R})}dt'.
\end{equation*}
Handling the right-hand side via Hardy-Littlewood-Sobolev fractional integration (e.g. see  Theorem 1 on page 119 of \cite{s1970}) and combining the resulting inequality with \eqref{k2midest}, we infer
%
%
$$
\| K_2 \|_{L^2(\mathbb{R})}\lesssim \|\eta\|_{L_t^{\mu'}((0,T');L_x^{r'}(\mathbb{R}_+))}
$$ 
which can be combined with \eqref{dualityarg} to yield
		\begin{equation}\label{stric5}
			\|q_1\|_{L_t^{\mu}((0,T');L_x^{r}(\mathbb{R}_+))}\lesssim \|\Psi_1\|_{L^2(\mathbb{R})}.
		\end{equation}

Differentiating the expression \eqref{q1-form} $j$ times in $x$ and repeating the above arguments,  for any $j\in \mathbb{N}_0$ we conclude that
		\begin{equation}\label{jstric5}
			\|\partial_x^jq_1\|_{L_t^{\mu}((0,T');L_x^{r}(\mathbb{R}_+))}\lesssim \|\partial_x^j\Psi_1\|_{L^2(\mathbb{R})}\lesssim \|g_0\|_{H_t^{\frac{j+1}{3}}(\mathbb{R})}.
		\end{equation} 
Observe that the left-hand side of estimate \eqref{jstric5} is simply the  $L_t^\mu((0, T'); W^{j, r}(\mathbb R_+))$-norm of $q_1$.
In this connection, note that, according to a classical result by Calder\'on \cite{c1961}, for any $j\in\mathbb N_0$, $1<r<\infty$ the Sobolev space $W^{j, r}(\mathbb R)$ and the Bessel potential space $H^{j, r}(\mathbb R)$ coincide (i.e. they are equal as sets). 
Indeed, it is not hard to show that $W^{j, r}(\mathbb R) \subseteq H^{j, r}(\mathbb R)$. 
On the other hand, showing that $H^{j, r}(\mathbb R) \subseteq W^{j, r}(\mathbb R)$ is more involved; see page 22 of \textsection 2.3 in \cite{g2014m}, where the result is proved with the help of the Mikhlin-H\"ormander theorem for $L^p$ multipliers (Theorem 6.2.7 in \cite{g2014c}). Thus, for any $j\in\mathbb N_0$, $1<r<\infty$ we have $\left\| \cdot \right\|_{H^{j, r}(\mathbb R)} \simeq \left\| \cdot \right\|_{W^{j, r}(\mathbb R)}$ and so
\begin{equation}\label{eq-norms}
\left\| q_1 \right\|_{W^{j, r}( \mathbb R_+)}
:=
\inf_{\substack{\tilde q_1 \in W^{j, r}(\mathbb R) \\ \tilde q_1|_{\mathbb R_+} = q_1}} \left\| \tilde q_1 \right\|_{W^{j, r}(\mathbb R)}
\simeq
\inf_{\substack{\tilde q_1 \in H^{j, r}(\mathbb R) \\ \tilde q_1|_{\mathbb R_+} = q_1}} \left\| \tilde q_1 \right\|_{H^{j, r}(\mathbb R)}
=:
\left\| q_1 \right\|_{H^{s, r}(\mathbb R_+)}.
\end{equation}
Observing that the left-hand side of estimate \eqref{jstric5} is simply the $W^{j, r}(\mathbb R_+)$-norm of $q_1$, in view of \eqref{eq-norms} we see that \eqref{jstric5} is in fact equivalent to
\begin{equation}\label{jstric52}
\|q_1\|_{L_t^{\mu}((0,T');H_x^{j, r}(\mathbb{R}_+))}\lesssim \|g_0\|_{H_t^{\frac{j+1}{3}}(\mathbb{R})}.
\end{equation} 
Finally, by interpolation (e.g. see Theorem 5.1 in  \cite{lm1972}) we deduce
\begin{equation}\label{stric7}
			\|q_1\|_{L_t^{\mu}((0,T');H_x^{s,r}(\mathbb{R}_+))}\lesssim \|g_0\|_{H_t^{\frac{s+1}{3}}(\mathbb{R})}, \quad s\geq 0,
		\end{equation} 
completing the estimation of $q_1$.

		In order to estimate $\|q_2\|_{L_t^{\mu}((0,T');H_x^{s,r}(\mathbb{R}_+))}$, we use \eqref{q2rewrite}-\eqref{g0sqrtT} (note the difference in notation, as $q_2$ in those expressions now corresponds to $\partial_x^j q_2$) as the portions $\gamma_2$ and $\Gamma_2$ of the two parametrizations \eqref{gamma-par} and \eqref{Gamma-par} coincide.
In particular, 
\begin{align*}
&\quad
\|\partial_x^jq_2(\cdot,t)\|_{L_x^r(\mathbb{R}_+)}=\left(\int_{-\infty}^{\infty}|Q_2(m,t)|^rdm\right)^{\frac{1}{r}}
\\
&\le\bigg((T')^{\frac{r}{2}}\|g_0\|_{L_t^2(0,T')}^r\int_{c_-}^{c_+}m^{jr}|\omega'(m)|^rdm\bigg)^{\frac{1}{r}}=c_{j,r}\sqrt{T'}\|g_0\|_{L_t^2(0,T')}.
\end{align*} 
Therefore, for any $j\in\mathbb N_0$ we find
\begin{equation*}
\|\partial_x^jq_2\|_{L_t^{\mu}((0,T');L_x^{r}(\mathbb{R}_+))}
=c_{j,r}(T')^{\frac{1}{\mu}+\frac{1}{2}}\|g_0\|_{L_t^2(0,T')}\\
\le
c_{j,r}(T')^{\frac{1}{\mu}+\frac{1}{2}}\|g_0\|_{H_t^{\frac{j+1}{3}}(0,T')},
\end{equation*}
and, using again the equivalence of the Bessel potential and Sobolev norms \eqref{eq-norms} along with interpolation, we conclude that
		\begin{equation}\label{stricq2}
			\|q_2\|_{L_t^{\mu}((0,T');H_x^{s,r}(\mathbb{R}_+))}\lesssim (T')^{\frac{1}{\mu}+\frac{1}{2}}\|g_0\|_{H_t^{\frac{s+1}{3}}(0,T')}, \quad s\geq 0.
\end{equation} 

As the estimation of $q_3$ is similar to that of $q_1$, the proof of Theorem \ref{BdrStrThm} is complete.
\end{proof}

\begin{proof}[Proof of Lemma \ref{vander2}]
By the Fundamental Theorem of Calculus, $K$ can be rewritten as
$$
K(x,y,z,t) = - \int_{-\infty}^{c_-} \frac{d I}{d m}(m;x,y,t) \, p(m;z) dm,
$$
where $I(m;x,y,t):= \displaystyle \int_{m}^{c_-}e^{i\phi(\xi;x,y,t)}d\xi$. 
Integrating by parts using the fact that $I(c_-;x,y,t)=0$ and $p(m;z)\rightarrow 0$ as $m\rightarrow -\infty$, and noting also that $\displaystyle \frac{dp}{dm}(m;z)>0$, we get
$$
|K(x,y,z,t)|\leq\int_{-\infty}^{c_-} \left|I(m;x,y,t)\right| \frac{dp}{dm}(m;z) \, dm.
$$
According to van der Corput's lemma (e.g. see page 370 in \cite{stein1986}), if $\eta(\xi)$ is a real function such that $|\eta^{(j)}(\xi)| \geq \alpha$ on $[a, b]$, with the additional condition that $\eta'(\xi)$ is monotone if $j=1$, then 
$$
\bigg|\int_a^b e^{i\eta(\xi)} d\xi\bigg| \leq c_j \alpha^{-\frac 1j}.
$$
Noting that $|\phi^{(3)}(\xi; x, y, t)| = 48\beta|t|$, we can employ this classical result for $I$ with $\eta(\xi) = \phi(\xi; x, y, t)$ to infer that $|I(m;x,y,t)|\lesssim |t|^{-\frac 13}$, $t\neq 0$, where the constant of inequality is independent of $m, x, y, t$.  In turn, for any $t\neq 0$ and $z>0$ we obtain
$$
|K(x,y,z,t)|
\lesssim |t|^{-\frac 13}\int_{-\infty}^{c_-}\frac{dp}{dm}(m;z) dm
=
|t|^{-\frac13}e^{-\lambda z}
\leq |t|^{-\frac 13}, 
$$
which is the desired estimate.
\end{proof}
	
\begin{proof}[Proof of Lemma \ref{modLTbound}]
			First, we prove part (i).
			By definition of $s(m)$, we have
			$$\frac{ds(m)}{dm}=\frac{3(m-\frac{\alpha}{\beta})}{s(m)}=-\sqrt{3} \, \frac{\sqrt{s^2(m)+c_{\alpha,\beta,\delta}}}{s(m)}$$ with $c_{\alpha,\beta,\delta}=\frac{\alpha^2+3\beta\delta}{3\beta^2}$. Therefore, upon change of variable $s=s(m)$, we get
			$$\int_{-\infty}^{c_-}e^{-xs(m)}f(m)dm=\frac{1}{\sqrt{3}}\int_\lambda^\infty e^{-xs}f(m(s))\frac{s}{\sqrt{s^2+c_{\alpha,\beta,\delta}}}ds=\int_0^\infty e^{-xs} f_\lambda(s)ds,$$ where
			$$f_\lambda(s):=\begin{cases}
				\frac{1}{\sqrt{3}}f(m(s))\frac{s}{\sqrt{s^2+c_{\alpha,\beta,\delta}}} & s\in (\lambda,\infty),\\
				0, &s\notin (\lambda,\infty).
			\end{cases}$$
			Using the $L^2$-boundedness of the Laplace transform (see Lemma 3.2 in \cite{fhm2017}), we get
			$$\left\|\int_0^\infty e^{-xs} f_\lambda(s)ds\right\|_{L_x^2(\mathbb{R}_+)}\lesssim \|f_\lambda\|_{L^2_s(\mathbb{R}_+)}.
$$
Finally, note that
\begin{align*}
				\|f_\lambda\|_{L^2_s(\mathbb{R}_+)}^2&=\frac{1}{3}\int_\lambda^\infty|f(m(s))|^2\frac{s^2}{{s^2+c_{\alpha,\beta,\delta}}}ds\\
				&=\frac{1}{\sqrt{3}}\int_{-\infty}^{c_-}\left|f(m)\right|^2\frac{s(m)}{\sqrt{s^2(m)+c_{\alpha,\beta,\delta}}}dm\lesssim \int_{-\infty}^{c_-}\left|f(m)\right|^2dm.
\end{align*}

Next, we establish part (ii). Setting $F(m):=\displaystyle \int_{0}^{\infty}e^{-xs(m)}f(x)dx$ and using the Cauchy-Schwarz inequality along the lines of the proof of Lemma 3.2 in \cite{fhm2017}, we have
			\begin{align*}
				|F(m)|^2&=\left|\int_{0}^{\infty}e^{-xs(m)}f(x)dx\right|^2=\left|\int_{0}^{\infty}e^{-\frac{xs(m)}2}f(x)x^{\frac 14} \cdot x^{-\frac 14}e^{-\frac{xs(m)}2}dx\right|^2\\
				&\le \left(\int_{0}^{\infty}e^{-xs(m)}|f(x)|^2x^{\frac 12}dx\right)\left(\int_{0}^{\infty}e^{-xs(m)}x^{-\frac 12}dx\right).
			\end{align*}
Then, since the second integral on the right-hand side is equal to 
$\displaystyle \frac{1}{{\sqrt{s(m)}}}\int_{0}^{\infty}e^{-u}u^{-\frac 12}{du}=\frac{\sqrt{\pi}}{\sqrt{s(m)}}$, 
			$$\int_{-\infty}^{c_-}|F(m)|^2dm=\sqrt{\pi}\int_{-\infty}^{c_-}\left(\int_{0}^{\infty}\frac{1}{\sqrt{s(m)}} \, e^{-xs(m)}|f(x)|^2x^{\frac 12}dx\right)dm.$$
Finally, noting that
			\begin{align*}\int_{-\infty}^{c_-}\frac{1}{\sqrt{s(m)}}\, x^{\frac 12}e^{-xs(m)}dm&=\frac{1}{\sqrt{3}}\int_\lambda^\infty e^{-xs}s^{-\frac 12}x^{\frac 12}\frac{s}{\sqrt{s^2+c_{\alpha,\beta,\delta}}} \, ds \lesssim \int_\lambda^\infty e^{-xs}s^{-\frac 12}x^{\frac 12}ds\le c\sqrt{\pi},
\end{align*}
we arrive at the desired estimate 
$$
\int_{-\infty}^{c_-}|F(m)|^2dm\lesssim \int_0^\infty |f(x)|^2dx
$$ 
completing the proof of the lemma.
\end{proof}

\section{Nonlinear analysis}
\label{nonlinear-s}

The various linear estimates established in Section \ref{linear-s} will now be combined with a contraction mapping argument in order to establish local well-posedness in the sense of Hadamard for the nonlinear initial-boundary value problem \eqref{nonlinear1}. In view of these linear results, the solution space will change as we transition from the setting of high regularity ($\frac 12<s\leq 2$) to the one of low regularity ($0\leq s < \frac 12$). More specifically, in the former case  well-posedness will be established in the space $C([0, T]; H_x^s(\mathbb R_+))$ for a appropriate choice of $T>0$ (see Theorem \ref{HighRegThm}), while in the latter case that space will be refined by intersecting it with the Strichartz-inspired space $L_t^\mu((0, T); H_x^{s, r}(\mathbb R_+))$ for an admissible choice of exponents $(\mu, r)$ in terms of the nonlinearity order $p$ and the Sobolev exponent $s$ according to \eqref{admissiblepair} (see Theorem \ref{LowRegThm}). 
	
\subsection{\ttfamily\bfseries Linear reunification}\label{LinSolFormula}
The nonlinear analysis will be performed by using a solution operator $u\mapsto \Phi u$ associated with the original forced linear initial-boundary value problem \eqref{linear1}. To this end, thanks to the superposition principle we reunify the solution representation formulae corresponding to (i) the homogeneous Cauchy problem \eqref{cauchy}, (ii) the nonhomogeneous Cauchy problem \eqref{nonhomogeneous1}, and (iii)  the reduced initial-boundary value problem \eqref{linear4}. More precisely, given $u$, we formally define the map
	\begin{equation}\label{solformula}
	\begin{aligned}
		\Phi u&:=y|_{Q_T}+z^u|_{Q_T}+q^u|_{(0,T)}\\
		&\equiv 
		S[E_0u_0;0]\big|_{Q_T} + 
		S[0;f(Eu)]\big|_{Q_T}
		-\frac{i}{2\pi}\int_{\Gamma}e^{ikx-\omega(k)t}\omega'(k) \, \widetilde{g}_0^u(\omega(k),T')dk\Big|_{(0,T)},
	\end{aligned}
	\end{equation} 
where $Q_T=\mathbb{R}_+\times(0,T)$ for some $T>0$ to be determined and 
\begin{equation}\label{gu-def}
g_0^u(t):=E_b\big\{g(\cdot)-S[E_0u_0;0](0, \cdot)-S[0;f(Eu)](0, \cdot)\big\}(t)
\end{equation}
with the temporal transform $\widetilde{g}_0^u(\omega(k),T')$ defined according to \eqref{gj-def}. The extension operators $E_0$ and $E_b$ were defined below problems \eqref{cauchy} and \eqref{linear4} respectively; importantly, $E_0$ satisfies inequality \eqref{e0-def} and $E_b$ induces compact support on $g_0$, namely $\text{supp} g_0 \subset [0, T')$, $T'>T$. Moreover, the operator $E$ is a similar bounded fixed extension operator. In particular, for $s>\frac 12$ we take $E=E_0$ while for $0\leq s < \frac12$ we take $E$ from $H_x^s(\mathbb{R}_+)\cap H^{s,r}_x(\mathbb{R}_+)$ into $H_x^s(\mathbb{R})\cap H_x^{s,r}(\mathbb{R})$ for a certain $r>2$ to be specified later.

In view of \eqref{solformula}, we define the solutions of the nonlinear problem \eqref{nonlinear1} as the fixed points of the operator $\Phi$. 
Thus, our goal will be to prove the existence of a unique such fixed point in a suitable function space.
Throughout our analysis, we assume $u_0\in H_x^s(\mathbb{R}_+)$ and $g\in H_{t,\textnormal{loc}}^{\frac{s+1}{3}}(\mathbb{R}_+)$ with $s\in[0, 2]\setminus \{\frac{1}{2}\}$ and the compatibility conditions \eqref{comp-cond} in place as necessary.  We first treat the high regularity case $\frac12<s\leq 2$ in which we are able to employ the algebra property of $H_x^s(\mathbb{R}_+)$, and then move on to the low regularity case $0\leq s<\frac12$ in which we address the lack of the algebra property by refining our solution space motivated by the linear Strichartz estimates.
	\subsection{\ttfamily\bfseries High regularity solutions: Proof of Theorem \ref{HighRegThm}}
In the high regularity setting, we suppose that $\frac 12 < s \leq 2$ and $p>0$ with the additional assumptions \eqref{sprelation1} as necessary.
Our goal is to establish local well-posedness in the space $X_{T}:=C([0,T];H_x^s(\mathbb{R}_+))$ for some $T>0$ to be determined. We consider $X_T$ as a metric space with the metric
	$$d_{X_T}(u_1,u_2):=\|u_1-u_2\|_{X_T}, \quad u_1,u_2\in X_T.$$ Note that any closed ball in $X_T$ is a complete subspace. 
\\[2mm]
\noindent	
\textit{Showing that $\Phi$ is into.}
The conservation law \eqref{cuachyspaceest} in Theorem \ref{cauchylemma} and the boundedness \eqref{e0-def} of the spatial extension operator $E_0$ imply
\begin{equation}\label{HRCest}
			\|y|_{Q_T}\|_{X_T}\le\|S[E_0u_0;0]\|_{C([0,T];H_x^s(\mathbb{R}))} =\|E_0u_0\|_{H_x^s(\mathbb{R})}\lesssim \|u_0\|_{H_x^s(\mathbb{R}_+)},
\end{equation}
which takes care of the first term in \eqref{solformula}.
For the second term in \eqref{solformula}, let $u\in X_T$ and combine the nonhomogeneous estimate \eqref{nonhthm1} in Theorem \ref{Nonhomthm} with the algebra property in $H_x^s(\mathbb R)$ to yield
\begin{equation}\label{HRNonhomCEst}
\begin{aligned}
\|z^u|_{Q_T}\|_{X_T}
&\leq
\big\|S[0;f(E_0u)]\big\|_{C([0,T];H_x^s(\mathbb{R}))}
\lesssim \int_0^{T}\|f(E_0u(\cdot,t))\|_{H_x^s(\mathbb{R})}dt
\\ 
&\lesssim
\int_0^{T}\|E_0u(\cdot,t)\|_{H_x^s(\mathbb{R})}^{p+1}dt
\lesssim\int_0^{T}\|u(\cdot,t)\|_{H_x^s(\mathbb{R}_+)}^{p+1}dt \lesssim T\|u\|_{X_{T}}^{p+1}.
\end{aligned}
\end{equation}
Regarding the third term in \eqref{solformula}, using estimate \eqref{ribvp-se} in Theorem \ref{ibvpthem01} and the boundedness of the temporal extension operator $E_b$ (see Section 3 of \cite{hm2021} for more details), we get (say with $T'=2T$)
\begin{equation}\label{HRqest}
\begin{aligned}
\|q^u|_{(0,T)}\|_{X_T} \le \|q\|_{X_{T'}} &\lesssim \big(1+\sqrt{T'}e^{cT'}\big)\|g_0^u\|_{H_t^{\frac{s+1}{3}}(0,T')}
\\
&\lesssim
\big(1+\sqrt{T'}e^{cT'}\big)\|g_0^u\|_{H_t^{\frac{s+1}{3}}(0,T)}
\lesssim \big(1+\sqrt{T}e^{cT}\big)\|g_0^u\|_{H_t^{\frac{s+1}{3}}(0,T)}.
\end{aligned}
\end{equation}
By using the definition of $g_0$ in \eqref{linear4} and temporal trace estimates \eqref{cauchyextra3}, \eqref{cauchyextra3*} and \eqref{nonhthm2}, we obtain
\begin{equation}\label{HRbvpEst}
\begin{aligned}
\|g_0^u\|_{H_t^{\frac{s+1}{3}}(0,T)} 
&\lesssim \|g\|_{H_t^{\frac{s+1}{3}}(0,T)} + (1+T^{\frac12}) \|u_0\|_{H_x^s(\mathbb{R}_+)}
\\
&\quad +\max\{T^{\frac12}(1+T^{\frac12}), T^{\sigma}\} \, \|f(E_0u)\|_{L_t^2((0,T); H_x^{s}(\mathbb{R}))},
\end{aligned}
\end{equation} 
with $\sigma$ given by \eqref{stildedef}.
By using the definition of the solution space $X_T$ and the boundedness \eqref{e0-def}  of the spatial extension operator $E_0$, we have 
\begin{equation}\label{fL2Hsest}\|f(E_0u)\|_{L_t^2((0,T); H_x^{s}(\mathbb{R}))}\lesssim T^{\frac12}\|u\|_{X_{T}}^{p+1}.\end{equation}
Using the definition \eqref{solformula} of $\Phi$ and combining estimates \eqref{HRCest}-\eqref{fL2Hsest}, we deduce
		\begin{equation}\label{phiu-est}
			\|\Phi(u)\|_{X_{T}}\le c_0\left( c_1(T)\|u_0\|_{H_x^s(\mathbb{R}_+)}+c_2(T)\|g\|_{H_t^{\frac{s+1}{3}}(0,T)}+c_3(T)\|u\|_{X_T}^{p+1}\right),
		\end{equation}
		where the positive constants $c_1, c_2, c_3$  are given by  $c_1(T)=(1+\sqrt{T}e^{cT})(1+T^{\frac12})$, $c_2(T)=(1+\sqrt{T}e^{cT})$, $c_3(T)=T+(1+\sqrt{T}e^{cT})T^{\frac12}\max\{T^{\frac12}(1+T^{\frac12}), T^{\sigma}\}$ and $c_0$ is a non-negative constant independent of $T$ and only depending on fixed parameters such as $\alpha,\beta,\delta$ and $s$.  

In view of estimate \eqref{phiu-est}, we set $R(T):=2A(T)$ with
$$
A(T):=c_0\Big( c_1(T)\|u_0\|_{H_x^s(\mathbb{R}_+)}+c_2(T)\|g\|_{H_t^{\frac{s+1}{3}}(0,T)}\Big)
$$  
and choose $T$ small enough so that $A(T)+c_0c_3(T)R(T)^{p+1}\le R(T)$ or, equivalently,  $c_0c_3(T)R^{p}(T)\le \frac12$. We note that such a choice is possible because $c_3(T)\rightarrow 0^+$ and $R(T)$ remains bounded as $T\rightarrow 0^+$. Then, for that choice of $T$, the map $\Phi$ takes the closed ball $\overline{B_{R(T)}(0)}\subset X_{T}$ into itself.
It remains to show that $\Phi$ is a contraction on $\overline{B_{R(T)}(0)}$. 
\\[2mm]
\noindent	
\textit{Showing that $\Phi$ is a contraction.}
Let $u_1,u_2\in \overline{B_{R(T)}(0)}$. Then,
\begin{equation}\label{diff-1}
\begin{aligned}
\|\Phi(u_1)-\Phi(u_2)\|_{X_{T}}
&=
\left\|z^{u_1}|_{Q_T}-z^{u_2}|_{Q_T}\right\|_{X_T}+\left\|q^{u_1}|_{(0,T)}-q^{u_2}|_{(0,T)}\right\|_{X_T}
\\
&\lesssim 
\big\|S[0;f(E_0u_1)-f(E_0u_2)]\big\|_{C([0,T];H_x^s(\mathbb{R}))}
\\
&\quad
+\big(1+\sqrt{T}e^{cT}\big) \left\|g_0^{u_1}-g_0^{u_2}\right\|_{H_t^{\frac{s+1}{3}}(0,T)}.
\end{aligned}
\end{equation}
We then recall the following difference estimate (e.g. see \cite{BO2016}).
\begin{lem}\label{nonlinearitylemma} 
Let $s>\frac{1}{2}$, $p>0$ satisfy \eqref{sprelation1}  and $\varphi, \varphi_1,\varphi_2\in H^s(\mathbb{R})$. Then, 
%
$$
\left\| |\varphi_1|^p\varphi_1-|\varphi_2|^p\varphi_2\right\|_{H^s(\mathbb{R})}\lesssim \left(\|\varphi_1\|_{H^s(\mathbb{R})}^p+\|\varphi_2\|_{H^s(\mathbb{R})}^p\right) \|\varphi_1-\varphi_2\|_{H^s(\mathbb{R})}.
$$
\end{lem}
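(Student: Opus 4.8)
The plan is to reduce the difference estimate for $|\varphi_1|^p\varphi_1 - |\varphi_2|^p\varphi_2$ to a pointwise bound on the nonlinearity $f(z) = |z|^p z$ and then lift it to the Sobolev level using the fractional Leibniz rule together with the Banach algebra property of $H^s(\mathbb{R})$ for $s>\tfrac12$. I would first split into cases according to whether $p$ is an even integer (so that $f$ is a polynomial in $z$ and $\bar z$ and everything is smooth) or not, since the hypothesis \eqref{sprelation1} is only needed in the latter case to control the limited smoothness of $z \mapsto |z|^p$ near the origin. In the smooth case one simply writes $f(\varphi_1) - f(\varphi_2) = (\varphi_1 - \varphi_2)\int_0^1 f_z(\varphi_2 + \theta(\varphi_1-\varphi_2))\, d\theta + (\bar\varphi_1 - \bar\varphi_2)\int_0^1 f_{\bar z}(\cdots)\, d\theta$, and since $H^s(\mathbb{R})$ is an algebra and $f_z, f_{\bar z}$ are polynomials of degree $p$ in their arguments, the $H^s$-norm of each factor is controlled by $\|\varphi_1\|_{H^s}^p + \|\varphi_2\|_{H^s}^p$ times $\|\varphi_1-\varphi_2\|_{H^s}$; the Sobolev embedding $H^s \hookrightarrow L^\infty$ handles the $L^\infty$ factors that appear when distributing derivatives.

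For the general (non-smooth) case, the key elementary inequality is the pointwise bound
$$
\big| |z_1|^p z_1 - |z_2|^p z_2 \big| \lesssim \big( |z_1|^p + |z_2|^p \big)\, |z_1 - z_2|, \qquad z_1,z_2\in\mathbb{C},
$$
which follows from the mean value theorem applied along the segment joining $z_1$ and $z_2$ since $|\nabla f(z)| \lesssim |z|^p$. This immediately gives the $L^2$ part of the $H^s$-norm via Hölder with the embedding $H^s\hookrightarrow L^\infty$. For the derivative part, when $s\le 1$ one uses the fractional chain/Leibniz rule (Kato–Ponce type) to write $D^s\big(f(\varphi_1)-f(\varphi_2)\big)$ and estimate it by terms of the form $\big\||\varphi_i|^p\big\|_{L^\infty}\|D^s(\varphi_1-\varphi_2)\|_{L^2}$ and $\big\|D^s(|\varphi_1|^p - |\varphi_2|^p)\big\|_{L^2}\|\varphi_1-\varphi_2\|_{L^\infty}$, and the second type reduces to a difference estimate for the scalar map $t\mapsto t^{p/2}$ (or $|z|^p$) at regularity $s$, which is exactly where the restriction $\lfloor p\rfloor \ge \lfloor s\rfloor$ (resp. $\lfloor p\rfloor \ge s-1$) enters: it guarantees $|z|^p$ has enough Hölder/Sobolev regularity for the chain rule to close. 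For $1<s\le 2$ one differentiates once classically, reducing to an $H^{s-1}$ estimate for $f'(\varphi_1)\varphi_1' - f'(\varphi_2)\varphi_2'$, which is then handled by adding and subtracting $f'(\varphi_1)\varphi_2'$ and invoking the algebra property of $H^{s-1}$ together with a difference estimate for $f'$ at the (now lower) regularity $s-1$.

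The main obstacle is the non-smoothness of $z\mapsto |z|^p$ at the origin when $p\notin 2\mathbb{Z}$: establishing that $D^s(|\varphi_1|^p - |\varphi_2|^p)$ is controlled in $L^2$ by $(\|\varphi_1\|_{H^s}^{p-1}+\|\varphi_2\|_{H^s}^{p-1})\|\varphi_1-\varphi_2\|_{H^s}$ requires a careful fractional-calculus argument — a paraproduct decomposition or a Littlewood–Paley / commutator estimate combined with the Hölder continuity of order $\min(p,1)$ of $t\mapsto |t|^p$ — and it is precisely the arithmetic conditions in \eqref{sprelation1} relating $\lfloor p\rfloor$ to $\lfloor s\rfloor$ that make this possible. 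Since this lemma is cited from \cite{BO2016}, I would present the smooth case and the $L^2$ reduction in full and then refer to \cite{BO2016} (and standard fractional Leibniz rule references) for the delicate derivative estimate in the non-integer case, noting that the argument there carries over verbatim on the full line.
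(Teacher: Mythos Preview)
The paper does not actually prove this lemma: it is simply stated and attributed to \cite{BO2016} with the phrase ``We then recall the following difference estimate (e.g.\ see \cite{BO2016}).'' Your proposal therefore cannot be compared line-by-line against a proof in the paper, since there is none.

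That said, your sketch is a faithful outline of the standard argument behind such results, and it correctly identifies the two regimes (polynomial nonlinearity when $p\in 2\mathbb Z$, where the algebra property of $H^s$ for $s>\tfrac12$ suffices, versus the non-smooth case $p\notin 2\mathbb Z$, where fractional Leibniz/chain rules are needed and the arithmetic constraints in \eqref{sprelation1} enter). You also correctly recognize that the lemma is quoted from \cite{BO2016} and propose to defer the delicate fractional-calculus step there. This is exactly what the paper does, so your proposal is consistent with --- indeed more detailed than --- the paper's own treatment.
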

Employing Lemma \ref{nonlinearitylemma} and the arguments used earlier in \eqref{HRNonhomCEst}, we deduce
\begin{equation}\label{s-diff}
\big\|S[0;f(E_0u_1)-f(E_0u_2)]\big\|_{C([0,T];H_x^s(\mathbb{R}))}\lesssim 
T\big(\|u_1\|_{X_{T}}^p+\|u_2\|_{X_{T}}^p\big)\|u_1-u_2\|_{X_{T}}.
\end{equation}
Moreover, for the difference of boundary data we have, similarly to \eqref{HRbvpEst}, 
\begin{equation}\label{g-diff}
\begin{aligned}
\|g_0^{u_1}-g_0^{u_2}\|_{H_t^{\frac{s+1}{3}}(0,T)}
&\lesssim
\max\{T^{\frac12}(1+T^{\frac12}), T^{\sigma}\} \left\|f(E_0u_1)-f(E_0u_2)\right\|_{L_t^2((0,T); H_x^{s}(\mathbb{R}))}
\\
&\lesssim
 \max\{T^{\frac12}(1+T^{\frac12}), T^{\sigma}\}T^{\frac12}\big(\|u_1\|_{X_{T}}^p+\|u_2\|_{X_{T}}^p\big)\|u_1-u_2\|_{X_{T}},
\end{aligned}
\end{equation}
where $\sigma$ is given by \eqref{stildedef}.  Combining \eqref{s-diff} and \eqref{g-diff} with \eqref{diff-1}, we obtain 
\begin{equation}\label{Phidifest}
\|\Phi(u_1)-\Phi(u_2)\|_{X_{T}}
\lesssim 
c_3(T)\big(\|u_1\|_{X_{T}}^p+\|u_2\|_{X_{T}}^p\big)\|u_1-u_2\|_{X_{T}}
\lesssim c_3(T)R^p(T)\|u_1-u_2\|_{X_{T}}.
\end{equation}
		Note that $c_3(T)\rightarrow 0^+$ and $R(T)$ remains bounded as $T\rightarrow 0^+$. Therefore, for sufficiently small $T>0$ the map $\Phi$ is a contraction on $\overline{B_{R(T)}(0)}$, and hence $\Phi$ has a unique fixed point in $\overline{B_{R(T)}(0)}$ which, as noted earlier, amounts to local existence of a unique solution to the HNLS initial-boundary value problem \eqref{nonlinear1} on $\overline{B_{R(T)}(0)}$.  
\\[2mm]
\noindent		
\textit{Extending uniqueness to $X_T$.} To prove uniqueness over the entire space $X_T$ and not just the closed ball $\overline{B_{R(T)}(0)}$, we suppose that $u_1,u_2\in X_T$ are two solutions associated with the  same pair of initial and boundary data $(u_0, g)$. At first, we consider the case of $u_1,u_2$ being sufficiently smooth and, along with their derivatives, decaying sufficiently fast as $x\rightarrow \infty$.  This allows us to proceed via energy estimates. In particular, we note that the difference $w:=u_1-u_2$  solves the following problem:
\begin{equation}\label{difnonlinear1}
\begin{aligned}
&iw_t+i\beta w_{xxx}+\alpha w_{xx}+i\delta w_x = f(u_1)-f(u_2),  \quad (x,t)\in\mathbb{R}_{+} \times (0,T),
\\
&w(x,0) = 0, \quad x\in \mathbb{R}_{+},
\\
&w(0,t) = 0, \quad t\in (0,T).
\end{aligned}
\end{equation}
Multiplying the main equation by $\overline{w}$, integrating in $x$, taking imaginary parts, and using Lemma \ref{nonlinearitylemma}  and the embedding $H^s_x(\mathbb{R}_+)\hookrightarrow L^\infty_x(\mathbb{R}_+)$, which is valid for $s>\frac 12$, we find
\begin{align*}
\frac{1}{2}\frac{d}{dt}\|w(t)\|_{L^2_x(\mathbb{R}_+)}^2
&=
-\frac{\beta}{2}|w_x(0,t)|^2+\text{Im}\int_0^\infty \big[f(u_1(x,t))-f(u_2(x,t))\big]\bar{w}(x,t)dx
\\
&\lesssim 
\int_0^\infty \big(|u_1(x,t)|^p+|u_2(x,t)|^p\big) |{w}(x,t)|^2dx
\\
&\lesssim \big( \|u_1(t)\|_{H_x^s(\mathbb{R}_+)}^p+\|u_2(t)\|_{H_x^s(\mathbb{R}_+)}^p\big) \|w(t)\|_{L^2_x(\mathbb{R}_+)}^2
\\
&\lesssim \big(\|u_1\|_{X_T}^p+\|u_2(t)\|_{X_T}^p\big) 
\|w(t)\|_{L^2_x(\mathbb{R}_+)}^2.
\end{align*} 
Setting $y(t):=\|w(t)\|_{L^2_x(\mathbb{R}_+)}$, the above energy estimate is satisfied provided that $y'(t)-cy(t)\le 0$, $t\in (0,T)$ for some non-negative constant $c$.
Solving this differential inequality alongside the condition $y(0)=\|w(0)\|_{L^2_x(\mathbb{R}_+)}=0$ (note that $w(x,0)\equiv 0$), we obtain $y \equiv 0$ i.e. $w = u_1-u_2\equiv 0$. 
The case of rough $u_1, u_2$ can be treated via mollification along the lines of the arguments used in the proof of Proposition 1.4 in \cite{h2005}.
%
\\[2mm]
\noindent		
\textit{Continuous dependence on the data.} 
For $(u_0,g)\in H_x^s(\mathbb{R}_+)\times H_{t,\textnormal{loc}}^{\frac{s+1}{3}}(\mathbb{R}_+)$, let 
$$
T_{\text{max}}:=\sup\,\{\,T>0\,|\,\text{there is a solution associated to the data } (u_0,g) \text{ on } [0,T]\}.
$$ Then, either $T_{\text{max}}=\infty$ or else $T_{\text{max}}<\infty$ and there is no solution $u \in X_{T_{\text{max}}}$ since otherwise the lifespan of $u$ could be extended beyond $T_{\text{max}}$ by starting with initial datum equal to $u(T_{\text{max}})$. 
Therefore, we may let $u\in C([0,T_{\text{max}});H^s_x(\mathbb{R}_+))$ be the maximal solution associated to the data $(u_0,g)$; then, for $T<T_{\text{max}}$, in particular,   $u|_{[0,T]}$ is the unique solution in $X_T$ established above.
		
Let $T<T_{\text{max}}$ be small enough that $\Phi$ is a contraction on $\overline{B_{R(T)}(0)}$ for any solution associated with data $(v_0,h)\in H_x^s(\mathbb{R}_+)\times H_{t,\textnormal{loc}}^{\frac{s+1}{3}}(\mathbb{R}_+)$ and satisfying
$$
\|v_0\|_{H^s_x(\mathbb{R}_+)}+ \|h\|_{H_t^{\frac{s+1}{3}}(0,T)}
\leq 
2\, \Big(\|u_0\|_{H^s_x(\mathbb{R}_+)}+ \|g\|_{H_t^{\frac{s+1}{3}}(0,T)}\Big).
$$
If follows that if $\delta>0$ is small enough, for $(v_0,h)$ satisfying
$$
\|v_0-u_0\|_{H^s_x(\mathbb{R}_+)}+ \|g-h\|_{H_t^{\frac{s+1}{3}}(0,T)}<\delta
$$
the associated solution $v$ belongs  to $\overline{B_{R(T)}(0)}$.  Therefore,  $u$ and $v$ are both fixed points of $\Phi$ on $\overline{B_{R(T)}(0)}$ associated with the pairs of data $(u_0,g)$ and $(v_0,h)$, respectively. Then, the corresponding nonlinear estimates from the contraction argument imply
$$
\|u-v\|_{X_T}=\|\Phi u-\Phi v\|_{X_T}
\lesssim 
c(T)\Big( \|u_0-v_0\|_{H_x^s(\mathbb{R}_+)}+\|g-h\|_{H_t^{\frac{s+1}{3}}(0,T)}\Big)
\lesssim \delta c(T).
$$
which amounts to continuity of the data-to-solution map. The proof of Theorem \ref{HighRegThm} for  well-posedness in the high regularity setting is complete.

\subsection{\ttfamily\bfseries Low regularity solutions: Proof of Theorem \ref{LowRegThm}}
In this setting, we work under the assumptions \eqref{assmponsandp}.
The lack of the algebra property brings in the need for the various Strichartz estimates established in Section \ref{linear-s} and hence motivates the solution space 
$$
Y_T:=C([0,T];H^s_x(\mathbb{R}_+))\cap L^\mu_t((0, T); H_x^{s,r}(\mathbb{R}_+)).
$$ 
It is convenient to also consider the associated space on the whole spatial line, namely
$$\widetilde Y_T:=C([0,T];H^s_x(\mathbb{R}))\cap L^\mu_t((0, T); H_x^{s,r}(\mathbb{R})).$$ 
The following lemma will serve as the low regularity analogue of the algebra property and Lemma \ref{nonlinearitylemma}.
\begin{lem}\label{nonlinest}
Let $(s,p)$, $(\mu, r)$ satisfy \eqref{assmponsandp}  and suppose $\varphi,\varphi_1,\varphi_2\in L_{t}^{\mu}((0, T); H_{x}^{s,r}(\mathbb{R}))$.
Then, 
\begin{align}\label{nonlinest1}
\big\| |\varphi|^p\varphi \big\|_{L_t^1((0, T); H_x^{s}(\mathbb{R}))}&\lesssim T^{\frac{\mu-p-1}{\mu}}\left\| \varphi \right\|_{L_{t}^{\mu}((0, T); H_{x}^{s,r}(\mathbb{R}))}^{p+1},
\\
\label{nonlinest2}
\big\| |\varphi_1|^p\varphi_1-|\varphi_2|^p\varphi_2 \big\|_{L_t^1((0, T); H_x^{s}(\mathbb{R}))}
&\lesssim T^{\frac{\mu-p-1}{\mu}}\Big(\|\varphi_1\|_{L_{t}^{\mu}((0, T); H_{x}^{s,r}(\mathbb{R}))}^p
+
\|\varphi_2\|_{L_{t}^{\mu}((0, T); H_{x}^{s,r}(\mathbb{R}))}^p  \Big)
\nonumber\\
&\quad
\cdot \|\varphi_1-\varphi_2\|_{L_{t}^{\mu}((0, T); H_{x}^{s,r}(\mathbb{R}))}.
\end{align}
\end{lem}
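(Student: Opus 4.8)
The plan is to reduce the $L^1_t H^s_x$ estimate to a pointwise-in-time multilinear estimate in the Bessel potential scale and then integrate with Hölder in time. First I would fix $t \in (0,T)$ and establish the static estimate
\begin{equation}\label{static-prod}
\big\| |\varphi|^p\varphi \big\|_{H_x^{s}(\mathbb{R})} \lesssim \|\varphi\|_{H_x^{s,r}(\mathbb{R})}^{p+1},
\end{equation}
valid for $0 \le s < \tfrac12$ and $r = \tfrac{2(p+1)}{1+2sp}$. The natural tool here is the fractional Leibniz (Kato–Ponce) inequality for Bessel potentials, which gives $\| \langle D\rangle^s(|\varphi|^p\varphi)\|_{L^2_x} \lesssim \|\langle D\rangle^s \varphi\|_{L^{r}_x} \|\varphi\|_{L^{\rho}_x}^{p}$ provided $\tfrac12 = \tfrac1r + \tfrac{p}{\rho}$; combined with the Sobolev embedding $H^{s,r}_x(\mathbb{R}) \hookrightarrow L^{\rho}_x(\mathbb{R})$, which holds exactly when $\tfrac1\rho = \tfrac1r - s$ (one checks this requires $\tfrac1r - s > 0$, i.e. $r < \tfrac1s$, which the admissibility relations in \eqref{assmponsandp} guarantee in the relevant range, and that the numbers $r,\rho$ are $\ge 1$). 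The arithmetic $\tfrac12 = \tfrac1r + p(\tfrac1r - s) = \tfrac{p+1}{r} - ps$ is precisely the definition $r = \tfrac{2(p+1)}{1+2sp}$, so the exponents match. One subtlety is that $z \mapsto |z|^p z$ is only $C^{\lfloor p\rfloor}$-ish when $p \notin 2\mathbb{Z}$, so the fractional Leibniz rule must be applied in the form that only needs the function $|z|^p z$ to be Lipschitz-type with the multiplier structure $|z|^p$; since $0 \le s < \tfrac12 < 1$ this is exactly the regime where such estimates are standard (see the references to \cite{BO2016} and the literature already cited).

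Granting \eqref{static-prod}, I would integrate in $t$: by Hölder on $(0,T)$ with exponents $\tfrac{\mu}{p+1}$ and its conjugate $\tfrac{\mu}{\mu-p-1}$ (note $\mu \ge p+1$ in the relevant range, with equality only in a borderline case that can be handled directly),
\begin{equation*}
\int_0^T \|\varphi(t)\|_{H^{s,r}_x}^{p+1}\, dt \le T^{\frac{\mu-p-1}{\mu}} \Big(\int_0^T \|\varphi(t)\|_{H^{s,r}_x}^{\mu}\, dt\Big)^{\frac{p+1}{\mu}} = T^{\frac{\mu-p-1}{\mu}} \|\varphi\|_{L^\mu_t((0,T);H^{s,r}_x)}^{p+1},
\end{equation*}
which is exactly \eqref{nonlinest1}.

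For the difference estimate \eqref{nonlinest2} I would prove the pointwise-in-time version
\begin{equation}\label{static-diff}
\big\| |\varphi_1|^p\varphi_1 - |\varphi_2|^p\varphi_2 \big\|_{H^s_x(\mathbb{R})} \lesssim \big(\|\varphi_1\|_{H^{s,r}_x}^p + \|\varphi_2\|_{H^{s,r}_x}^p\big)\|\varphi_1-\varphi_2\|_{H^{s,r}_x},
\end{equation}
using the elementary inequality $\big||z_1|^p z_1 - |z_2|^p z_2\big| \lesssim (|z_1|^p + |z_2|^p)|z_1 - z_2|$ together with its "fractional-derivative" counterpart obtained by writing $|z_1|^pz_1 - |z_2|^pz_2 = \int_0^1 \tfrac{d}{d\theta}\big(|z_\theta|^p z_\theta\big)\, d\theta$ with $z_\theta = \theta z_1 + (1-\theta)z_2$, so that the integrand is a product of $|z_\theta|^p$-type factors and $(z_1 - z_2)$; then apply the same fractional Leibniz plus Sobolev embedding scheme as above, splitting the $L^2$ norm via Hölder with the exponents $r$ (for the factor carrying $\langle D\rangle^s$) and $\rho$ (for each of the $p$ remaining factors, via $H^{s,r}\hookrightarrow L^\rho$). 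Summing over which factor carries the derivative produces the symmetric $\|\varphi_1\|^p + \|\varphi_2\|^p$ structure. Finally, integrating \eqref{static-diff} in $t$ and applying Hölder with the triple of exponents $(\tfrac{\mu}{p}, \tfrac{\mu}{p},\dots)$ — more precisely $\int_0^T (\|\varphi_1\|^p + \|\varphi_2\|^p)\|\varphi_1-\varphi_2\|\, dt \le T^{\frac{\mu-p-1}{\mu}}(\|\varphi_1\|_{L^\mu_tH^{s,r}_x}^p + \|\varphi_2\|_{L^\mu_tH^{s,r}_x}^p)\|\varphi_1-\varphi_2\|_{L^\mu_tH^{s,r}_x}$ — gives \eqref{nonlinest2}.

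The main obstacle I anticipate is bookkeeping the exact interplay between the Kato–Ponce exponent identity $\tfrac12 = \tfrac1r + \tfrac p\rho$, the Sobolev embedding identity $\tfrac1\rho = \tfrac1r - s$, and the admissibility formulas $r = \tfrac{2(p+1)}{1+2sp}$, $\mu = \tfrac{6(p+1)}{p(1-2s)}$ from \eqref{assmponsandp}, and in particular verifying that all the Lebesgue exponents produced ($r,\rho,\mu$ and their conjugates) genuinely lie in the admissible ranges $(1,\infty)$ — or $[1,\infty]$ with the endpoints handled separately — throughout $0 \le s < \tfrac12$ and $1 \le p \le \tfrac{6}{1-2s}$. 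The second, more technical, point is justifying the fractional product rule for the non-smooth nonlinearity $|z|^p z$ when $p$ is not an even integer: one must invoke the appropriate version valid for merely Hölder-continuous derivatives (this is where assumption \eqref{assmponsandp}, in particular $p \ge 1$, is used), and confirm that the $s < \tfrac12 < 1$ restriction keeps us within the range where this is classical. Once those two accounting tasks are done, the rest is the routine Hölder-in-time argument sketched above.
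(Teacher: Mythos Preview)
Your proposal is correct and follows essentially the same route as the paper: reduce to a pointwise-in-time estimate via fractional derivative inequalities plus the Sobolev embedding $H^{s,r}_x \hookrightarrow L^\rho_x$ with $\tfrac1\rho = \tfrac1r - s$, and then apply H\"older in time with exponent $\tfrac{\mu}{p+1}$. The paper's only refinements over your sketch are organizational: it invokes the Christ--Weinstein fractional \emph{chain} rule (rather than Kato--Ponce/Leibniz) for the composition $F(\varphi)=|\varphi|^p\varphi$, treats the homogeneous piece $D^s$ and the case $\theta=0$ separately, and in the difference estimate splits off the borderline $p=1$ (since applying the chain rule to $G=F'$ requires $|G'(w)|\lesssim |w|^{p-1}$, i.e.\ $p>1$). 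Your integral representation $|z_1|^pz_1-|z_2|^pz_2=\int_0^1 \tfrac{d}{d\theta}(|z_\theta|^pz_\theta)\,d\theta$ followed by the product rule is exactly how the paper proceeds.
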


Lemma \ref{nonlinest} is proved after the end of the current proof and corresponds to the one-dimensional analogue of inequality (6.17) for the two-dimensional  NLS equation proved in \cite{hm2020}. Note, importantly, that the admissibility conditions \eqref{assmponsandp} are different than those in \cite{hm2020} due to the third-order dispersion of the HNLS equation. Thus, the proof of Lemma \ref{nonlinest} does not follow from \cite{hm2020}.
Now, we are ready to prove Theorem \ref{LowRegThm} for low regularity solutions.
\\[2mm]
\noindent
\textit{Existence.}
First, we consider the \emph{subcritical case} $p\neq \frac{6}{1-2s}$ so that $\frac{\mu-p-1}{\mu}>0$.  We work again with the solution operator \eqref{solformula}, which was obtained via linear reunification.
Theorems \ref{cauchylemma} and \ref{homStr} imply
\begin{equation}\label{nonlowregest1}
\|y|_{Q_T}\|_{Y_T}\le \|y\|_{\widetilde Y_T}\lesssim \|E_0u_0\|_{H^s_x(\mathbb{R})}\lesssim \|u_0\|_{H^s_x(\mathbb{R}_+)},
\end{equation}
while Theorems \ref{Nonhomthm} and \ref{NonHomStrThm} along with inequality \eqref{nonlinest1} and the same argument that was used in \eqref{HRNonhomCEst} yield
\begin{equation}\label{nonlowregest2}
\|z^u|_{Q_T}\|_{Y_T}\le \|z^u\|_{\widetilde Y_T}\lesssim \big(T+T^{\frac{\mu-p-1}{\mu}}\big)\|u\|_{Y_T}^{p+1}.
\end{equation}
Furthermore, Theorem \ref{BdrStrThm} (with say $T'=2T$) and the same arguments that led to \eqref{HRqest} imply
\begin{equation}\label{nonlowregest3}
\|q^u|_{(0,T)}\|_{Y_T} \lesssim \big(1+\sqrt{T}e^{cT}+T^{\frac{1}{\mu}+\frac{1}{2}}\big)\|g_0^u\|_{H_t^{\frac{s+1}{3}}(0,T)}.
\end{equation}
Combining \eqref{nonlowregest1}-\eqref{nonlowregest3} and proceeding along the lines of the arguments that resulted in \eqref{HRbvpEst} and \eqref{fL2Hsest}, we obtain
$$
\|\Phi(u)\|_{Y_T}\le c_0\Big( c_1(T)\|u_0\|_{H_x^s(\mathbb{R}_+)}+c_2(T)\|g\|_{H_t^{\frac{s+1}{3}}(0,T)}+c_3(T)\|u\|_{Y_T}^{p+1}\Big),
$$
where  the positive constants  $c_1, c_2, c_3$  are given by $c_1(T)=(1+\sqrt{T}e^{cT}+T^{\frac{1}{\mu}+\frac{1}{2}})(1+T^{\frac12})$, $c_2(T)=(1+\sqrt{T}e^{cT}+T^{\frac{1}{\mu}+\frac{1}{2}})$, $c_3(T)=(T+T^{\frac{\mu-p-1}{\mu}})+(1+\sqrt{T}e^{cT}+T^{\frac{1}{\mu}+\frac{1}{2}})T^{\frac12}\max\{T^{\frac12}(1+T^{\frac12}), T^{\sigma}\}$  and $c_0$ is a non-negative constant independent of $T$ and only depending on fixed parameters such as $\alpha,\beta,\delta$ and $s$. 

For the contraction, given $u_1,u_2\in Y_T$ we employ inequality \ref{nonlinest2} together with the same arguments that led to \eqref{Phidifest} to infer		
\begin{equation}\label{Phidifestlow}
\|\Phi(u_1)-\Phi(u_2)\|_{Y_T}\lesssim c_3(T)\big(\|u_1\|_{Y_T}^p+\|u_2\|_{Y_T}^p\big)\|u_1-u_2\|_{Y_T}.
\end{equation}
This estimate implies the existence of a fixed point in $Y_T$ for sufficiently small $T>0$ via the same arguments that were used in the proof of Theorem \ref{HighRegThm}.
		
Next, we consider the \textit{critical case} $p=\frac{6}{1-2s}$. The difference here compared to the subcritical case is that the limit $c_3(T)\rightarrow 0^+$ as $T\rightarrow 0^+$ is no longer true; however, $\Phi$ is still a contraction provided that the data (and, correspondingly, the radius of the closed ball that depends on the size of the data) are chosen sufficiently small. 
\\[2mm]
\noindent
\textit{Uniqueness.}
We adapt the method used for the Cauchy problem in the proof of Proposition 4.2 of \cite{Caz90} to the framework of initial-boundary value problems. 

First, consider the subcritical case $p\neq \frac{6}{1-2s}$. Let $u_1=\Phi(u_1), u_2=\Phi(u_2)\in Y_T$ be two solutions associated with the same pair of initial and boundary data.  Suppose to the contrary that there is $t\in [0,T]$ for which $u_1(t)\neq u_2(t)$, and let
$$t_{\text{inf}}:=\inf\left\{t\in [0,T]\,|\, u_1(t)\neq u_2(t)\right\}.$$
Taking $t_n<t_{\text{inf}}$ such that $t_n\rightarrow t_{\text{inf}}^-$ as $n\rightarrow \infty$, we see that  $u_1(t_n)=u_2(t_n)$ by definition of $t_{\text{inf}}$. Thus,  in view of the fact that $u_1,u_2$ are both continuous from $[0,T]$ into $H_x^s(\mathbb{R}_+)$,  taking the limit $n\rightarrow \infty$ we deduce that 
$u_1(t_{\text{inf}})=u_2(t_{\text{inf}})=:\varphi \in H_x^s(\mathbb{R}_+)$
makes sense. Set $U_1(t)=u_1(t+t_{\text{inf}})$ and $U_2(t)=u_2(t+t_{\text{inf}})$. Then, $U_1$ and $U_2$ are both solutions on the temporal interval $[0,T-t_{\text{inf}}]$ that satisfy the same initial and boundary conditions, namely
$$U_1(0)=U_2(0)=\varphi,\quad U_1|_{x=0}=U_2|_{x=0}=g(\cdot+t_{\text{inf}}) =: g_{\text{inf}}.
$$
Since $U_1$ and $U_2$ are continuous in $t$, by the definition of $t_{\text{inf}}$ there is a $\delta>0$ such that $U_1\neq U_2$ for $t\in (0,\delta)$.  Let $t=t_{\text{inf}}+\eps$ with $\eps\in (0,\delta)$ fixed and to be specified below. We have 
\begin{equation}\label{u1u2-eps}
\begin{aligned}
\|U_1-U_2\|_{L_t^\mu((0, \eps);H_x^{s,r}(\mathbb{R}_+))}
&\lesssim c_{\text{inf}}(\eps)\big(\|U_1\|_{L_t^\mu((0, \eps);H_x^{s,r}(\mathbb{R}_+))}^p+\|U_2\|_{L_t^\mu((0, \eps);H_x^{s,r}(\mathbb{R}_+))}^p\big)
\\
&\quad
\cdot \|U_1-U_2\|_{L_t^\mu((0, \eps);H_x^{s,r}(\mathbb{R}_+))}, 
\end{aligned}
\end{equation} 
where
$c_{\text{inf}}(\eps):=\eps^{\frac{\mu-p-1}{\mu}}+\eps^{\frac{1}{\mu}+\frac{1}{2}}\eps^{\frac12}\max\{\eps^{\frac12}(1+\eps^{\frac12}), \eps^{\sigma}\}.$ Let $\eps \in (0, \delta)$ be small enough so that 
\begin{equation}\label{UniqEst}
c_{\text{inf}}(\eps)\big(\|U_1\|_{L_t^\mu((0, \eps);H_x^{s,r}(\mathbb{R}_+))}^p+\|U_2\|_{L_t^\mu((0, \eps);H_x^{s,r}(\mathbb{R}_+))}^p\big)<1,
\end{equation} 
which is possible because $c_{\text{inf}}(\eps)\rightarrow 0^+$ as $\eps\rightarrow 0^+$. Then, \eqref{u1u2-eps} implies that $U_1=U_2$ on $(0,\eps) \subset (0, \delta)$, leading to a contradiction. Hence, uniqueness follows.
		
		In the critical case $p=\frac{6}{1-2s}$, although the limit $c_{\text{inf}}(\eps)\rightarrow 0^+$ as $\eps\rightarrow 0^+$ is no longer true, the uniqueness argument remains valid as \eqref{UniqEst} still holds due to the fact that, due to the dominated convergence theorem, the norms $\|U_1\|_{L_t^\mu((0, \eps);H_x^{s,r}(\mathbb{R}_+))}$ and $\|U_2\|_{L_t^\mu((0, \eps);H_x^{s,r}(\mathbb{R}_+))}$ can be made arbitrarily small  by taking $\eps$ small enough. 
		
Finally, the continuous dependence of the unique solution in $Y_T$ on the initial and boundary data can be proved as in the high regularity setting, thereby completing the proof of Theorem \ref{LowRegThm}.

\begin{proof}[Proof of Lemma \ref{nonlinest}]
By H\"{o}lder's inequality,
$$
\big\| |\varphi|^p\varphi \big\|_{L_t^1((0,T); H^{s}(\mathbb{R}))} \leq T^{\frac{\mu-p-1}{\mu}} \bigg( \int_{0}^{T}  \big\| |\varphi(t)|^p\varphi(t) \big\|_{H_x^{s}(\mathbb{R})}^{\frac{\mu}{p+1}} dt \bigg)^{\frac{p+1}{\mu}}.
$$
On the other hand,
$\left\| \varphi \right\|_{L_{t}^{\mu}(0,T_0;H_{x}^{s,r}(\mathbb{R}))}^{p+1}=\bigg( \int_{0}^{T}  \|\varphi(t)\|_{H_{x}^{s,r}(\mathbb{R})}^{\mu} dt \bigg)^{\frac{p+1}{\mu}}$. 
Hence, in order to establish \eqref{nonlinest1}, it suffices to prove that 
		\begin{equation}\label{nonlinearityext1}
			\|D^\theta\big( |\varphi(t)|^{p}\varphi(t) \big)\|_{L_x^2(\mathbb{R})} \lesssim \|\varphi(t)\|_{H_{x}^{\theta,r}(\mathbb{R})}^{p+1}, \quad t\in(0,T),
		\end{equation}
for $\theta=0$ and $\theta=s$.  
To this end, we set $F(z):=|z|^pz$, $z\in\mathbb{C}$. If $s\neq 0$, by using the chain rule for fractional derivatives (e.g. see Proposition 3.1 in \cite{Christ91}), we have
		\begin{equation}\label{nonlinearityext2}
			\|D^s F(\varphi(t))\|_{L_x^2(\mathbb{R})}\lesssim \|F'(\varphi(t))\|_{L_x^{\frac{\mu}{3}}(\mathbb{R})}\|D^s \varphi(t)\|_{L_x^r(\mathbb{R})}
		\end{equation} with $\frac12=\frac{3}{\mu}+\frac{1}{r}$. Noting that $|F'(\varphi(t))|\le (p+1)|\varphi(t)|^p$, we further find
		\begin{equation}\label{nonlinearityext3}
			\|F'(\varphi(t))\|_{L_x^{\frac{\mu}{3}}(\mathbb{R})}\lesssim\|\varphi(t)\|^{p}_{L_x^{\frac{p\mu}{3}}(\mathbb{R})},
		\end{equation}
while for $\frac{3}{p\mu}=\frac{1}{r}-s$ we also have the embedding
		\begin{equation}\label{nonlinearityext4}
			\|\varphi(t)\|_{L_x^{\frac{p\mu}{3}}(\mathbb{R})}\lesssim \|\varphi(t)\|_{H^{s,r}_{x}(\mathbb{R})}.
		\end{equation}
		Combining \eqref{nonlinearityext3} and \eqref{nonlinearityext4} with \eqref{nonlinearityext2}, we obtain (\ref{nonlinearityext1}) for $\theta=s\neq 0$. Notice that $r=2(p+1)$ for $s=0$. Therefore, $$\| |\varphi(t)|^p\varphi(t)\|_{L^2_x(\mathbb{R})}=\|\varphi(t)\|_{L_x^{2(p+1)}(\mathbb{R})}^{p+1}=\|\varphi(t)\|_{L_x^{r}(\mathbb{R})}^{p+1},$$ which corresponds to  \eqref{nonlinearityext1} for $\theta= 0$.

		Regarding inequality \eqref{nonlinest2} for the differences, we first consider the case $s=0$ which implies $r=2(p+1)$. Using the standard pointwise difference estimate for the power-type nonlinearity and then applying Hölder's inequality in $x$, we get
		\begin{align*}
			\big\| |\varphi_1|^p\varphi_1-|\varphi_2|^p\varphi_2 \big\|_{L_t^1((0, T); L_x^{2}(\mathbb{R}))}
			&\lesssim \int_0^T\left(\int_{-\infty}^\infty(|\varphi_1(x,t)|^p+|\varphi_2(x,t)|^p)^2|\varphi_1(x,t)-\varphi_2(x,t)|^2dx\right)^{\frac12}dt
\\
			&\lesssim\int_0^T\left(\|\varphi_1(t)\|_{L_x^{r}(\mathbb{R})}^p+\|\varphi_2(t)\|_{L_x^{r}(\mathbb{R})}^p\right)\|\varphi_1(t)-\varphi_2(t)\|_{L_x^{r}(\mathbb{R})}dt
		\end{align*} 
and the desired estimate \eqref{nonlinest2} for $s=0$ follows via Hölder's inequality in $t$.
		
Next, let us consider the case $s\neq 0$, in which $r=\frac{2(p+1)}{1+2sp}$.  First, observe that for $z_1,z_2\in \mathbb{C}$ and $\xi(\rho)=(1-\rho)z_2+\rho z_1$, $\rho\in [0,1]$, we have $\xi(0)=z_1$, $\xi(1)=z_2$, $\xi'(\rho)=z_1-z_2$,. Moreover,
		\begin{align*}
			|z_2|^pz_2\, -\, |z_1|^pz_1&=\int_{0}^1\frac{d}{d\rho}\left(|\xi(\rho)|^p\xi(\rho)\right)d\rho\\
			&=\frac{(p+2)}{2}(z_1-z_2)\int_0^1|\xi(\rho)|^pd\rho+\frac{p}{2}(\bar{z}_1-\bar{z}_2)\int_0^1|\xi(\rho)|^{p-2}\xi^2(\rho)d\rho.
		\end{align*}
Combining this writing with the fractional product rule (see Proposition 3.3 in \cite{Christ91}), we find
		\begin{align*}
\|D^sF(\varphi_1(t))-D^sF(\varphi_2(t))\|_{L_x^2(\mathbb{R})}
			&\lesssim\|D^s(\varphi_1(t)-\varphi_2(t))\|_{L_x^{r}(\mathbb{R})}\sup_{\rho\in[0,1]}\left\| |w(t)|^p\right\|_{L_x^{\frac{2r}{r-2}}(\mathbb{R})}\\
\quad
&+\,\|\varphi_1(t)-\varphi_2(t)\|_{L^{\frac{p\mu}{3}}_x(\mathbb{R})}\Big(\sup_{\rho\in[0,1]}\left\{\left\|D^s\left(G(w(t))\right)\right\|_{L_x^{r_1}(\mathbb{R})}\right\}\Big)
		\end{align*} where $\frac{1}{r_1}=\frac{1}{2}-\frac{3}{p\mu}$,
		$w(t)=(1-\rho)\varphi_2(t)+\rho\varphi_1(t)$ and $G(z)=F'(z)=\frac{p+2}{2}|z|^p+\frac{p}{2}|z|^{p-2}z^2$, $z\in \mathbb{C}$. 
		
Observing that  $|G'(w(t))|\le p(p+1)|w(t)|^{p-1}$ for $p>1$, we use the fractional chain rule (Proposition~3.1 in \cite{Christ91}) to infer	that, for $p>1$,
	\begin{align*}
			\left\|D^s\left(G(w(t))\right)\right\|_{L_x^{r_1}(\mathbb{R})}&\lesssim \| |w(t)|^{p-1}\|_{L_x^{r_2}(\mathbb{R})}\|D^sw(t)\|_{L^r_x(\mathbb{R})}\\
			&\lesssim \|w(t)\|_{L_x^{\frac{\mu p}{3}}(\mathbb{R})}^{p-1}\|D^sw(t)\|_{L^r_x(\mathbb{R})}\lesssim\|w(t)\|_{H_x^{s,r}(\mathbb{R})}^p,
		\end{align*} 
where $\frac{1}{r_2}=\frac{1}{r_1}-\frac{1}{r}$. In the above, the second inequality is due to the fact that, in view of \eqref{assmponsandp}, $r_2 = \frac{2(p+1)}{(p-1)(1-2s)}=\frac{\mu p}{3(p-1)}$, and the third inequality follows from the embedding \eqref{nonlinearityext4}. 
Furthermore, notice that $\frac{2r}{r-2}=\frac{\mu}{3}$ and so, using once again the embedding \eqref{nonlinearityext4}, 
		$$\left\| |w(t)|^p\right\|_{L_x^{\frac{2r}{r-2}}(\mathbb{R})}=\left\|w(t)\right\|_{L_x^{\frac{\mu p}{3}}(\mathbb{R})}^p\lesssim \|w(t)\|_{H_x^{s,r}(\mathbb{R})}^p.$$

Combining the last three estimates, we deduce
$$
			\|D^sF(\varphi_1(t))-D^sF(\varphi_2(t))\|_{L_x^2(\mathbb{R})}\lesssim \left(\|\varphi_1(t)\|_{H_x^{s,r}(\mathbb{R})}^p+\|\varphi_2(t)\|_{H_x^{s,r}(\mathbb{R})}^p\right)\|\varphi_1(t)-\varphi_2(t)\|_{H_x^{s,r}(\mathbb{R})}.
$$
Then, integrating over $(0,T)$, applying Hölder's inequality in $t$, and combining the resulting estimate with the case of $s=0$, we obtain \eqref{nonlinest2} for $s\neq 0$ and $p>1$.
		
Finally, for $p=1$ we note that $\frac12=\frac{1+2s}{4}+\frac{1-2s}{4}=\frac{1}{r}+\frac{1-2s}{4}=\frac{1}{r}+\frac{3}{\mu}$. Therefore,
		\begin{align*}
			\|D^sF(\varphi_1(t))-D^sF(\varphi_2(t))\|_{L_x^2(\mathbb{R})} &\lesssim \|D^s(\varphi_1(t)-\varphi_2(t))\|_{L_x^{r}(\mathbb{R})}\sup_{\rho\in[0,1]}\left\|w(t)\right\|_{L_x^{\frac{\mu}{3}}(\mathbb{R})}\\
			&\quad + \|\varphi_1(t)-\varphi_2(t)\|_{L^{\frac{\mu}{3}}_x(\mathbb{R})}\Big(\sup_{\rho\in[0,1]}\left\{\left\|D^s\left(G(w(t))\right)\right\|_{L_x^{r}(\mathbb{R})}\right\}\Big)\\
			&\lesssim \left(\|\varphi_1(t)\|_{H_x^{s,r}(\mathbb{R})}^p+\|\varphi_2(t)\|_{H_x^{s,r}(\mathbb{R})}^p\right)\|\varphi_1(t)-\varphi_2(t)\|_{H_x^{s,r}(\mathbb{R})}
		\end{align*}
with the last step thanks to the embedding \eqref{nonlinearityext4}. 
\end{proof}

	\bibliographystyle{amsplain}

\end{document}